\newenvironment{breakablealgorithm}
  {
   \begin{center}
     \refstepcounter{algorithm}
     \hrule height.8pt depth0pt \kern2pt
     \renewcommand{\caption}[2][\relax]{
       {\raggedright\textbf{\ALG@name~\thealgorithm} ##2\par}%
       \ifx\relax##1\relax 
         \addcontentsline{loa}{algorithm}{\protect\numberline{\thealgorithm}##2}%
       \else 
         \addcontentsline{loa}{algorithm}{\protect\numberline{\thealgorithm}##1}%
       \fi
       \kern2pt\hrule\kern2pt
     }
  }{
     \kern2pt\hrule\relax
   \end{center}
  }
\title[Detecting Abrupt changes in Point Processes]{Detecting Abrupt changes in Point Processes:\\  Fundamental Limits and Applications}
\author{Anna Brandenberger, Elchanan Mossel and Anirudh Sridhar}
\address{Department of Mathematics, Massachusetts Institute of Technology}
\email{abrande@mit.edu, elmos@mit.edu, anisri@mit.edu}
\newcommand{\cascade}{\mathcal{I}}
\newcommand{\cut}{\mathsf{cut}}
\newcommand{\treeroot}{\mathsf{root}}
\newcommand{\Eabrupt}{\cE_{\mathrm{abrupt}}}
\newcommand{\Edisc}{\cE_{\mathrm{disc}}}
\newcommand{\Eapprox}{\cE_{\mathrm{approx}}}
\DeclareMathOperator{\err}{\mathsf{err}}
\newcommand{\spacing}{\kappa}
\newcommand{\derivative}{\Delta_{\delta}^{(\ell + 1)}}
\DeclareMathOperator{\dist}{\mathrm{dist}}
\DeclareMathOperator{\sep}{\mathsf{sep}}
\newcommand{\cadlag}{c\`adl\`ag }
\renewcommand{\emptyset}{\varnothing}
\newcommand{\highdeg}{\mathrm{highdeg}}
\DeclareMathOperator*{\argmax}{arg\,max}
\newcommand{\cpdist}{d_{\max}}
\def\draft{1}
\newcommand{\anna}[1]{\ifnum\draft=1\textcolor{teal}{[\textbf{Anna:} #1]}\fi}
\newcommand{\ani}[1]{\ifnum\draft=1\textcolor{red}{[\textbf{Ani:} #1]}\fi}
\begin{document}

\begin{abstract}
We consider the problem of detecting abrupt changes (i.e., large jump discontinuities) in the rate function of a point process. 
The rate function is assumed to be fully unknown, non-stationary, and may itself be a random process that depends on the history of event times. We show that abrupt changes can be accurately identified from observations of the point process, provided the changes are sharper than the ``smoothness'' of the rate function before the abrupt change. This condition is also shown to be necessary from an information-theoretic point of view.
We then apply our theory to several special cases of interest, including the detection of significant changes in piecewise smooth rate functions and detecting super-spreading events in epidemic models on graphs.
Finally, we confirm the effectiveness of our methods through a detailed empirical analysis of both synthetic and real datasets. 
\end{abstract}

\maketitle

\section{Introduction}

In the problem of change-point detection, a stochastic process is observed, and the goal is to detect, often as quickly as possible, when abrupt changes occur in the underlying data-generating mechanism. 
Motivated by applications in the sciences, operations, and engineering, we address the problem of change-point detection in point processes. 
In such situations, the observed data corresponds to the times of significant events that are generated by some physical law, and our goal is to detect abrupt changes in the rate of event occurrences.
For instance, in epidemiology, the times of reported infection events in a population are caused by disease-spreading interactions between individuals. 
Abrupt changes in the rate of infections can be caused by super-spreading events where many individuals are suddenly exposed to a pathogen.
Similarly, in geophysics, significant geological events such as high-magnitude earthquakes and volcanic eruptions can sharply change the rate of earthquake occurrences in localized regions. 
In both cases, timely detection of these changes is essential for effective response and damage mitigation. Additional applications include detecting abrupt shifts in financial market activity (see e.g.~\cite{pepelyshev2017real} and references therein), spikes in content virality in social media \cite{pinto2015trend}, and many others. 

Traditional approaches to change-point detection for point processes, such as the celebrated CUSUM procedure \cite{page_cusum, lorden1971procedures, moustakides1986optimal}, assume that the behavior of the observed time series is fully known before and after the change-point; only the location of the change-point itself is unknown. However, this is a highly unrealistic assumption in complex, non-stationary systems (e.g., those found in epidemics and geophysics), where baseline behaviors may evolve and post-change behaviors may be unpredictable.
Moreover, in the face of unknown or unpredictable systems, it can be challenging to distinguish genuine anomalies from normal fluctuations in event rates.
To overcome these issues, existing work often assumes that the rate function has a specific form (e.g., piecewise constant or is part of a parametric family), or that abrupt changes persist for a long period of time \cite{zhang2023sequential, wang2023detecting, brown2008bayesian, jansen2007multiscale, dion-blanc2024multiple, brown2018detecting, fromont2023minimax}.

In this work, we consider the change-point detection problem for point processes with a fully unknown rate function that may experience abrupt, transient changes. 
This is a setting that has not previously been studied in such generality, to the best of our knowledge.
Despite the complex issues that arise in this case, we design novel algorithms for change-point detection that are surprisingly simple, computationally efficient and statistically optimal.

\subsection{Summary of contributions}
We assume that the observed time series can be modeled by an inhomogeneous Poisson point process, governed by a (possibly random) time-varying rate function. At a high level, the rate function is assumed to be smooth, apart from large jump discontinuities that occur infrequently. Our goal is to estimate when these abrupt changes occur. We consider the most information-limited setting where the rate function is fully unknown \emph{a priori}, apart from mild conditions on its smoothness outside of abrupt changes.

The fundamental idea behind our solution is as follows: to detect whether a change-point occurred at time $t$, one needs to compare the observed point process after time $t$ to the \emph{hypothetical} behavior of the point process if there had been no change-point. We call this latter quantity the \emph{counterfactual trajectory} of the process. If the difference between the observed and counterfactual trajectories exceeds a threshold, we declare that a change-point has occurred. 
Since the counterfactual trajectory is of course unobservable, the main role of our change-point detection algorithm is to estimate it in a reasonable manner. 

To estimate the counterfactual trajectory, a key insight is that smooth functions can be approximated locally by polynomials (e.g., a Taylor expansion). 
In a bit more detail, to understand how the point process may evolve after time $t$, we can fit a polynomial approximation to the event data in a short window $[t- \delta, t]$, and use this approximation to forecast the process behavior in a short window $[t, t + \delta]$. 
Consequently, the behavior of the polynomial in $[t, t + \delta]$ serves as an estimate for the counterfactual trajectory, with improved accuracy as the polynomial degree increases.
In light of this idea, we test whether a change-point occurs at time $t$ by (1) computing the polynomial estimate of the counterfactual trajectory based on events in $[t  - \delta, t]$, and (2) comparing the difference between the observed and counterfactual processes in $[t, t + \delta]$ to a threshold. 
Using techniques from numerical analysis, we show that this strategy can be implemented extremely efficiently. In particular, the difference between the observed process and a degree $d$ polynomial estimate of the counterfactual is shown to be equal to the order $d+1$ discrete derivative of the point process with discretization $\delta$, which is a simple linear function of local behavior around $t$. As a result, our algorithm can be readily implemented in an online fashion, and is as efficient as possible.

We rigorously analyze the performance of this algorithm for detecting abrupt changes in a general class of point processes. In particular, we show that if the magnitude of the abrupt change exceeds a critical size, our algorithm successfully detects it with small delay. This critical size is fundamentally linked to the ``smoothness" of the rate function: relatively smooth functions cannot produce abrupt changes, making it easier to attribute significant shifts in behavior to an anomaly. Conversely, less smooth functions may naturally exhibit wild fluctuations, requiring detectable anomalies to be all the more significant.
Moreover, we show that our methods are \emph{statistically optimal}, in the sense that if an abrupt change is substantially smaller than the critical size, there exist cases where no algorithm can detect a change. In other words, our algorithms can detect changes whenever it is information-theoretically possible to do so. 

We remark that such a critical size arises because abrupt changes may be {transient}. Prior work on change-point detection largely assumes that the impact of abrupt changes \emph{persist} for a long time, hence more information can be leveraged to detect potentially small changes. However, such scenarios necessitate describing the long-term effects of changes, which can be particularly challenging in non-stationary, complex systems. Under the assumption that changes are transient, one must instead rely on \emph{local} characteristics of the observed process, which in particular eliminates the need to specify the full physical system in our analysis and algorithms.

As an application of our results, we characterize the fundamental limits of detecting abrupt changes that arise naturally in point processes on graphs. 
The connection between graph structure and change-point detection was recently explored by Mossel and Sridhar \cite{mossel2024finding} in the context of network epidemic models (i.e., the Susceptible-Infected process). In particular, they pointed out that high-degree vertices in the network in which a virus spreads cause an abrupt increase in the overall infection rate of an epidemic, similar to how a super-spreader event suddenly increases the infection rate of a pandemic. 
Our results applied to the detection of high-degree vertices in network epidemic models show that in an $n$-vertex network, vertices of degree larger than $\sqrt{n}$ can be detected (in the sense that we can conclude, with short delay, that such a vertex has been infected) with high probability. 
Together, our result along with the prior results of Mossel and Sridhar \cite[Theorem 4]{mossel2024finding} show that $D = \sqrt{n}$ marks a sharp transition in the detectability of vertices of degree at least $D$. This settles a main open question of Mossel and Sridhar. We also settle additional questions related to the sample complexity of exactly estimating high-degree vertices from multiple infection processes on the same graph; this is discussed in more detail in Section~\ref{sec:SIprocess}.

Finally, we complement our theoretical results with empirical analyses of synthetic and real datasets. Through simulations, we investigate the impact of our algorithm parameters (i.e., the polynomial degree and the window size $\delta$) in detecting abrupt changes of a certain size. We study change-point detection in two settings: (1) a point process with a fully unknown rate function that is smooth, except for a single abrupt change, and (2) the infection process corresponding to a Susceptible-Infected process on an unknown graph with a single high-degree vertex. 
Motivated by our algorithm's effectiveness in identifying abrupt changes in epidemic models, we provide some evidence, based on the impact of the 2020 Sturgis Motorcyle rally, that our algorithm can identify super-spreading events in real epidemic data, even when they may appear hidden within broader trends. 
The success of our methods in detecting abrupt changes in spite of noise and quantization demonstrates their robustness and highlights their potential for real-world applications.

\subsection{Related works} 

\subsubsection*{Change-point detection.} 
The problem of detecting a change in the distribution of a time series has been studied at least since the early 1900s; we defer the reader to the texts of Shiryaev \cite{shiryaev2008optimalstopping} and Poor and Hadjiliadis \cite{poor2008quickest} for general references on the topic. To address scenarios where observations are assumed to form an i.i.d.\ sequence of random variables before and after a change-point, the CUSUM procedure was proposed by Page to detect change-points in an online manner~\cite{page_cusum}; this method was later shown to be optimal \cite{lai1998information, lorden1971procedures, moustakides1986optimal, ritov1990decision}.
The CUSUM procedure forms the basis of change-point detectors for more complex, modern applications as well; we defer the reader to the recent review \cite{xie2021sequential_review} for more information. 

Change-point detection in Poisson processes -- commonly known as the Poisson disorder problem in the literature -- has been a topic of significant interest for decades. In the classical setting, first studied by Gal'chuk and Rozovskii \cite{galchuk1971disorder} and Davis \cite{davis1976poisson} in the 1970s, the goal is to estimate (in an online fashion) with respect to some loss function the point at which a Poisson process changes its rate from $\lambda_0$ to $\lambda_1$.
This problem was fully solved for a simple loss function (the expected detection delay) by Peskir and Shiryaev \cite{peskir2002solving} in 2002, and subsequent work studied generalizations to other loss functions, various priors for the change-point, and possibly random post-change rates \cite{bayraktar2005standard, bayraktar2006adaptive, brown2006optimal, bayraktar2006exponential, burnaev2009disorder, herberts2004optimal, bayraktar2009online}.
A focus of contemporary work is to understand change-point detection in more complex point processes, including compound point processes \cite{dayanik2006compound, dayanik2010compound, dayanik2008multisource, gapeev2005disorder, uru2023compound} and self-exciting processes (e.g., Hawkes processes) \cite{ludkovski2012bayesian, pinto2015trend, li2017detecting, wang2022sequential, zhang2023sequential, wang2023detecting}.

It is advantageous to develop algorithms that are adaptive to unknown rate functions that can undergo multiple change-points; however, this has received limited attention in the context of point processes.
In existing work addressing this angle, rate functions are assumed to be part of a parametric family (so that generalized likelihood ratios can be used, for instance), and that changes in behavior persist for a significant amount of time \cite{zhang2023sequential, bayraktar2006adaptive, wang2023detecting}.
The detection of abrupt but transient changes in point processes has been largely unexplored to the best of our knowledge, except for the recent work of Fromont, Grela and Le Gu\'{e}vel \cite{fromont2023minimax} which assumes that the rate function is a deterministic, piecewise constant function. 
Our work contributes to this important line of work by developing algorithms that provably detect multiple potential abrupt and transient changes, under fully unknown or unspecified rate functions.

\subsubsection*{Learning structure from dynamics}
As noted in \cite{mossel2024finding}, algorithms for change-point detection can be used to learn about the structure of certain dynamical systems on graphs, such as (stochastic) network epidemic models. In such settings, one observes a sequence of infections caused by the spread of an epidemic (more generally, a stochastic cascading process) on a graph. 
The \emph{structure learning} problem asks whether the graph structure can be recovered from the observed infection times, possibly using the outcomes from multiple independent epidemic processes on the same graph. Most of the prior work on this topic has developed algorithms for exact recovery of the underlying graph (i.e., estimating all edges with high probability); see, e.g.,  \cite{ACFKP13_trace_complexity, NS12_cascades, hoffman2019learning, hoffman2020learning, khim2018theory}. The recent work of Mossel and Sridhar \cite{mossel2024finding} has shown that using far less information than such algorithms, important aspects of the underlying graph -- such as high-degree vertices -- can still be estimated, even when no edges can be learned with certainty. Our work builds on their methods and solves key open problems posed in their work. Additional work has explored the structure learning problem for other types of dynamical processes over networks, such as Glauber dynamics and Hamiltonian dynamics \cite{bresler2017learning,gaitonde2024unified,gaitonde2024efficiently, bakshi2024structure}. We defer the interested reader to \cite{mossel2024finding} for further related references on graphical models and the structure learning problem. 

\subsection{Future directions}

There are many possible directions for future work on change-point detection; a few important avenues are outlined below.

\begin{itemize}

    \item {\bf Other types of change-points.} The present work focuses on detecting large jump discontinuities in the underlying rate function. One may also consider detecting points of non-smoothness (i.e., a point process analogue of the framework in \cite{yu2022localising}) or points where the rate function changes by a multiplicative factor; existing frameworks have been developed by \cite{elkaroui2017minimax, wang2023detecting}. 
    Additionally, while our methods are well-suited to detecting \emph{transient} changes, it would be interesting to explore whether better methods exist for detecting changes that persist over time in unknown, non-stationary environments.

    \item {\bf Realistic problem features.} In practice, the detection and reporting of events generated by a point process may experience noise in the form of missing information and quantization (i.e., if an event is missed or events are counted in binned periods of time).
    Developing robust algorithms for point process data that can tolerate such noise is an important, ongoing objective in the field; see, e.g., \cite{cheng2024point} and references therein.   
    While our empirical results indicate that our methods may be robust to this type of noise, it would be useful to develop a better understanding of fundamental roadblocks that arise, from the lens of theory or more detailed case studies. 

    \item {\bf More complex models.} Point processes are the fundamental building blocks for more complex processes, such as Hawkes processes and compound Poisson processes. Similarly, many generalizations of the Susceptible-Infected process exist (e.g., variations that account for periods of exposure and recovery).
    How can we extend our framework and algorithms to these more complicated but realistic settings?
    
\end{itemize}

\subsection{Basic notation} Throughout, we use standard Landau notation $o(\cdot)$, $O(\cdot)$, $\omega(\cdot)$ and $\Omega(\cdot)$. We also write $f \ll g$ and $f \gg g$ to respectively denote $f = o(g)$ and $f = \omega(g)$. 
We use standard notation for number sets. In particular, $\mathbb{R}$ and $\mathbb{Z}$ denote the set of real numbers and integers, respectively. For a positive integer $k$, we denote the set $[k] : = \{ 1, \ldots, k \}$.
For $x,y \in \mathbb{R}$, we will sometimes use the shorthand $x \lor y : = \max \{ x, y \}$ and $x \land y : = \min \{x, y \}$.
For a function $f : \mathbb{R} \to \mathbb{R}$ and a positive integer $k$, we denote $f^{(k)}$ to be its order $k$ derivative, whenever it is well-defined.
For a finite set $S \subset \R$, the \emph{minimum separation} is defined to be
\[
\sep(S) : = \min \{ |a - b| : a, b \in S \text{ and } a \neq b \},
\]
with $\sep(S) = \infty$ if $S = \emptyset$.

\subsection{Organization} In Section~\ref{sec:def_results}, we rigorously define our point process model and state our main change detection result. We then discuss applications to the case of rate functions that are smooth except for jump discontinuities (Section~\ref{subsec:smooth-jump-results}), as well as the case of detecting high-degree vertices in the Susceptible-Infected process (Section~\ref{subsec:SI-results}).
In Section~\ref{sec:techniques}, we motivate and describe our change-point detection procedure (Algorithm~\ref{alg:derivative_thresholding}), then outline the method of proof of our main result (Theorem~\ref{thm:derivative_thresholding}) which proceeds by (i) polynomially approximating the point process (Theorem~\ref{thm:approximation_error}), and (ii) showing that the counterfactual residual is captured by higher-order derivatives (Lemma~\ref{lemma:derivative_characterization}). In Section~\ref{sec:empirical} we present our empirical analyses. 
In Sections~\ref{sec:polynomial_approximation} and \ref{sec:higher_derivatives} we respectively prove Theorem~\ref{thm:approximation_error} and Lemma~\ref{lemma:derivative_characterization}, which completes the proof of our main result. In Sections~\ref{sec:smoothjump} and \ref{sec:SIprocess} we prove our results for the two applications described respectively in Sections~\ref{subsec:smooth-jump-results} and \ref{subsec:SI-results}.

\section{Definitions and Main Results}
\label{sec:def_results}
We assume that all random processes are defined on a common probability space equipped with a filtration $\{ \cF_t \}_{t \ge 0}$.
The rate function $\Lambda(t)$ of a point process $\{ N(t) \}_{t \ge 0}$ is given by
\begin{equation}
\label{eq:point_process_rate}
\Lambda(t) : = \lim_{\epsilon \to 0} \frac{ \E [ N(t + \epsilon) \vert \cF_t] - N(t)}{\epsilon}.
\end{equation}
Given observations of the point process in a time interval $[0,T]$, our goal is to identify when abrupt changes (i.e., large jump discontinuities) occurred in $\Lambda$.
In the most general version of our results, we allow $\Lambda(t)$ to be a $\cF_t$-adapted random process undergoing several abrupt changes; this level of generality captures several important cases of interest, such as self-exciting point processes and epidemic models.

To clearly convey the intuition behind our results, we focus for now on a simpler setting, where $\Lambda(t)$ is an unknown, deterministic function that is smooth outside of a single abrupt jump discontinuity.
In particular, we assume the following simple representation for the sake of exposition: 
\begin{equation}
\label{eq:basic_rate_function}
\Lambda(t) := B x(t) + A y(t - t^*) \mathbf{1}(t \ge t^*),
\end{equation}
where $x, y: \mathbb{R}_{\ge 0} \to \mathbb{R}_{\ge 0}$ are (a priori unknown) smooth functions with $y(0) > 0$, $t^* \in \mathbb{R}_{\ge 0} \cup \{ \infty \}$ is the time of the abrupt change (where, in the case $t^* = \infty$, no change occurs), and $A,B > 0$ are positive constants. 
One may interpret $B$ as the magnitude of \underline{b}aseline behaviors before the change, and $A$ as the magnitude of the \underline{a}brupt change at $t^*$.
Given observations of the point process generated by $\Lambda$ in a finite time window $[0,T]$, our goal is to estimate $t^*$.

Our results concern the detectability of \emph{abrupt} changes, which we take to mean that $A$ is large while the parameters $x,y, T$ are fixed. 
Of particular interest to us are cases where it additionally holds that $A \ll B$. In such scenarios, the change is indeed abrupt, but it can be overshadowed or possibly explained by larger trends in the baseline behavior of the rate function. 
This phenomenon is concretely illustrated in the following example, which shows that it is generally information-theoretically impossible to detect change-points under the rate function \eqref{eq:basic_rate_function} if $A \ll \sqrt{B}$.

\begin{prop}
\label{prop:impossibility}
Fix $T > 1$. 
Suppose that the rate function has the form 
\[
\Lambda(t) = B + A e^{-(t - 1)} \mathbf{1}(t \ge 1).
\]
If $A / \sqrt{B} \to 0$, then no estimator can correctly detect a jump at $t = 1$ with probability greater than $1/2 + o(1)$.
\end{prop}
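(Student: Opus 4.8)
The plan is to treat this as a binary hypothesis testing problem and bound the total variation distance between the two candidate laws via the Kullback--Leibler divergence. Under the null hypothesis $H_0$ (i.e., $t^* = \infty$), the process observed on $[0,T]$ is a homogeneous Poisson process of rate $B$; call its law $P_0$. Under the alternative $H_1$ (i.e., $t^* = 1$), it is an inhomogeneous Poisson process with intensity $\lambda_1(t) := B + A e^{-(t-1)}\mathbf{1}(t \ge 1)$; call its law $P_1$. By Le Cam's two-point argument, if a uniform prior is placed on $\{H_0,H_1\}$, the probability that any (possibly randomized) estimator correctly identifies whether a jump occurred is at most $\tfrac12\big(1 + d_{\mathrm{TV}}(P_0,P_1)\big)$, so it suffices to show $d_{\mathrm{TV}}(P_0,P_1) \to 0$ whenever $A/\sqrt{B} \to 0$.

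First I would compute the KL divergence using the standard formula for Poisson processes with deterministic intensities, $\mathrm{KL}(P_1 \,\|\, P_0) = \int_0^T \big( \lambda_1(t)\log(\lambda_1(t)/B) - \lambda_1(t) + B \big)\, dt$, which follows from the Radon--Nikodym derivative $dP_1/dP_0 = \exp\!\big( \int_0^T \log(\lambda_1/B)\, dN - \int_0^T(\lambda_1 - B)\,dt \big)$ together with Campbell's formula for the mean of $\int \log(\lambda_1/B)\, dN$ under $P_1$. Since $\lambda_1 \equiv B$ on $[0,1]$, the integrand vanishes there, and only $[1,T]$ contributes. Writing $u(t) := A e^{-(t-1)}$ and using the elementary inequality $(1+x)\log(1+x) - x \le x^2$ for all $x \ge 0$ (applied with $x = u(t)/B$), the integrand on $[1,T]$ is at most $u(t)^2/B = A^2 e^{-2(t-1)}/B$. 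Hence $\mathrm{KL}(P_1 \,\|\, P_0) \le \int_1^{\infty} A^2 e^{-2(t-1)}/B \, dt = A^2/(2B)$.

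Then Pinsker's inequality gives $d_{\mathrm{TV}}(P_0,P_1) \le \sqrt{\mathrm{KL}(P_1 \,\|\, P_0)/2} \le A/(2\sqrt{B})$, which tends to $0$ by hypothesis. Combined with the Le Cam bound, the success probability of any estimator is at most $\tfrac12 + A/(4\sqrt{B}) = \tfrac12 + o(1)$, as claimed.

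I do not anticipate a serious obstacle here; the only points requiring care are (i) quoting the change-of-measure/KL identity for Poisson processes correctly and justifying the use of Campbell's formula, and (ii) phrasing the reduction from ``detecting a jump'' to two-point testing so that the resulting bound matches the proposition's statement exactly. The finiteness of $T$ enters only through the harmless overestimate $\int_1^T \le \int_1^\infty$, and the exponential decay of $y(t) = e^{-t}$ is precisely what keeps the divergence bounded independently of $T$; a slowly decaying $y$ would instead allow the divergence to grow and detection to become possible.
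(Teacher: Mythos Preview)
Your proposal is correct and follows essentially the same route as the paper: reduce to two-point testing, bound $d_{\mathrm{TV}}(P_0,P_1)$ via Pinsker's inequality, and show the relevant KL divergence is at most $A^2/(2B)$. The only cosmetic difference is that the paper bounds $\mathrm{KL}(P_0\,\|\,P_1)$ by expanding the log-likelihood as a sum over event times and averaging under $P_0$ using the Gamma law of arrivals, whereas you bound $\mathrm{KL}(P_1\,\|\,P_0)$ directly from the closed-form Poisson KL integral together with $(1+x)\log(1+x)-x\le x^2$; both reach the same $A^2/(2B)$.
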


The idea behind Proposition \ref{prop:impossibility} is that when $A \ll \sqrt{B}$, the magnitude of the abrupt change is smaller than the inherent randomness of the point process. Indeed, due to the baseline rate of $B$, we should expect the number of observed events in any constant-sized interval to experience normal fluctuations on the order of $\sqrt{B}$ around its mean. Since the impact of the change-point is non-negligible for only a finite amount of time due to the exponential decay, it is indistinguishable from the normal fluctuations of the process when $A \ll \sqrt{B}$. The full proof details can be found in Section \ref{sec:smoothjump}.

Although the change-point's impact overcomes the inherent randomness of the point process when $A$ is larger than $\sqrt{B}$, the fact that $\Lambda$ is fully unknown poses a major roadblock for detection. Indeed, if $\Lambda$ is unknown, it is unclear whether perceived changes in the sequence of events are due to natural variations in the rate function, or due to an abrupt change.
This issue is particularly problematic when $A$ is orderwise smaller than $B$, as the abrupt change caused by $y$ may be overshadowed by larger and more significant trends in $x$.
Our main result shows that, perhaps surprisingly, this identifiability issue can be fully circumvented: whenever $A$ is much larger than $\sqrt{B}$, change-points can be accurately detected. 

\begin{thm}[Informal and simplified version of Theorem \ref{thm:derivative_thresholding}]
\label{thm:main_simplified}
Let $T > 0$, and suppose that the point process given by the rate function \eqref{eq:basic_rate_function} is observed in the time window $[0,T]$.
If $A / \sqrt{B} \to \infty$, then there is an algorithm such that with probability $1 - o(1)$, the following hold:
\begin{enumerate}
    \item (No false alarms.) If $t^*= 0$ or $t^* \ge T$, then the algorithm outputs $\emptyset$. 
    \item (Accurate estimation.) If $t^* \in (0,T)$, then the algorithm outputs $\hat{t}$ satisfying $| \hat{t} - t^*| = o(1)$.
\end{enumerate}
\end{thm}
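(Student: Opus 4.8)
The plan is to instantiate the derivative-thresholding procedure of Algorithm~\ref{alg:derivative_thresholding} with a suitably chosen degree $\ell$ and window width $\delta=\delta(A,B)\to 0$: test each point $t$ on an $o(1)$-fine grid of $[0,T]$ by forming the $(\ell+1)$-st discrete derivative $\Delta_\delta^{(\ell+1)} N(t)$ (a fixed linear combination of $O(\ell)$ consecutive increments of $N$ over a window of length $(\ell+1)\delta$ about $t$), declare a change and return a maximizer $\hat t$ of $|\Delta_\delta^{(\ell+1)} N(t)|$ if the maximum exceeds a threshold $\Theta$, and return $\emptyset$ otherwise. By Lemma~\ref{lemma:derivative_characterization}, $\Delta_\delta^{(\ell+1)} N(t)$ is precisely the counterfactual residual --- the observed increments minus the degree-$\ell$ polynomial extrapolation --- so the proof reduces to estimating this statistic as $t$ ranges over the grid.

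Write $N = M + Z$ with $M(t)=\int_0^t\Lambda$ and $Z = N-M$ a martingale, so that $\Delta_\delta^{(\ell+1)} N = \Delta_\delta^{(\ell+1)} M + \Delta_\delta^{(\ell+1)} Z$, and I would establish three estimates. (i) \emph{Noise:} $\Delta_\delta^{(\ell+1)} Z(t)$ is a linear combination of $O(\ell)$ Poisson-type increments of mean $O(B\delta)$ with coefficients at most $\binom{\ell}{\lfloor \ell/2\rfloor}\le 2^{\ell}$, so Bernstein's inequality and a union bound over the $O(T/\delta)$ grid points give $\sup_t |\Delta_\delta^{(\ell+1)} Z(t)| = O(2^{\ell}\sqrt{B\delta\log(1/\delta)})$ with probability $1-o(1)$; this is the role of Theorem~\ref{thm:approximation_error}. (ii) \emph{Bias:} where the window about $t$ avoids $t^*$, $M$ is smooth there with $(\ell+1)$-st derivative $B\,x^{(\ell)}$ (plus $A\,y^{(\ell-1)}$ if $t>t^*$), so the standard finite-difference error bound gives $|\Delta_\delta^{(\ell+1)} M(t)| = O((A+B)\delta^{\ell+1})$. (iii) \emph{Signal:} where $t^*$ lies strictly inside the window, $M$ has a derivative jump of size $A\,y(0)=\Theta(A)$ at $t^*$ (using $y(0)>0$) and is otherwise smooth, and decomposing $M$ near $t^*$ into a ramp of slope $A\,y(0)$ plus a $C^{1}$ remainder plus the smooth baseline shows $\Delta_\delta^{(\ell+1)} M(t) = \Theta(A\delta) + O(A\delta^{2}+B\delta^{\ell+1})$, with the main term a ramp-type function of the offset $t^*-t$ that equals $\Theta(A\delta)$ for some $t$ within $O(\ell\delta)$ of $t^*$.

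Given these, the conclusion follows by taking $\Theta$ between the bias/noise scale and the signal scale. If $t^*=0$, so that $\Lambda$ is globally smooth on $[0,T]$, or if $t^*\ge T$, then only bias and noise contribute and $|\Delta_\delta^{(\ell+1)} N(t)|<\Theta$ over the whole grid, so the algorithm outputs $\emptyset$; if $t^*\in(0,T)$, estimate (iii) forces $|\Delta_\delta^{(\ell+1)} N(t)|>\Theta$ at some grid point within $O(\ell\delta)$ of $t^*$ while (i)--(ii) keep every grid point far from $t^*$ below $\Theta$, so $\hat t$ is reported and $|\hat t-t^*|=O(\ell\delta)=o(1)$. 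I expect the main obstacle to be exactly the simultaneous separation of the three scales $A\delta$ (signal), $(A+B)\delta^{\ell+1}$ (bias) and $2^{\ell}\sqrt{B\delta}$ (noise) with $\delta\to 0$ (and, when $t^*=0$, also $A\delta^{\ell+1}\ll\Theta\ll A\delta$): since increasing $\ell$ suppresses the bias but inflates the noise by a factor $2^{\ell}$, showing that $A/\sqrt B\to\infty$ is exactly sufficient, so as to match the impossibility threshold of Proposition~\ref{prop:impossibility}, requires a careful joint choice of $\ell$ and $\delta$, and this is where the precise smoothness hypotheses and parameter constraints of Theorem~\ref{thm:derivative_thresholding} enter. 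The remaining ingredients, namely the uniform concentration of $\Delta_\delta^{(\ell+1)} Z$ (subsumed in Theorem~\ref{thm:approximation_error}) and the bookkeeping over grid points, are routine.
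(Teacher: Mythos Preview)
Your proposal is correct and follows essentially the same strategy as the paper: instantiate Algorithm~\ref{alg:derivative_thresholding} with a fixed $\ell$ and $\delta\asymp B^{-1/(2\ell+1)}$, then separate the signal scale $A\delta$ from the bias $O((A+B)\delta^{\ell+1})$ and the noise $O(2^{\ell}\sqrt{B\delta\log})$; the paper packages bias and noise together as the single $\err$ term of \eqref{eq:error_bound} and establishes the same three-way separation via Theorem~\ref{thm:approximation_error} and Lemma~\ref{lemma:derivative_characterization}, culminating in \eqref{eq:estimating_T}. Your direct $N=M+Z$ decomposition with Bernstein for the Poisson increments is the natural specialization of the paper's martingale machinery (Freedman's inequality, Lemma~\ref{lemma:freedman}) to the deterministic-rate setting of \eqref{eq:basic_rate_function}, and your identification of the main obstacle --- that matching the sharp threshold $A/\sqrt{B}\to\infty$ rather than $A\ge B^{1/2+\epsilon}$ would require $\ell\to\infty$ and hence control of the $2^{\ell}$ factors --- is exactly right: the paper does not in fact prove this sharper version (see the remark following Theorem~\ref{thm:main_simplified}), and the ``informal'' in the theorem title refers precisely to this gap.
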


In words, we show that it is possible to detect abrupt changes of size larger than $\sqrt{B}$. Additionally, our algorithm does not raise a false alarm (i.e., output an estimated change-point when there is none), with high probability. 

The idea behind Theorem \ref{thm:main_simplified} is that a change-point occurring at time $t$ can be readily detected if we knew how the rate function would evolve if an abrupt change had \emph{not} happened at time $t$. If we somehow had access to this counterfactual trajectory, we could compare the rate of events in the observed process to the counterfactual after time $t$. This difference would be of order $A$ if there was indeed an abrupt change at time $t$. Otherwise, the difference would capture the fluctuations of the point process around its mean, which are of order at most $\sqrt{A + B}$.

The crux of the problem therefore reduces to estimating the counterfactual trajectory. We do so by building a predictor of events after time $t$ based only on observations in a short time window \textit{before} $t$. 
If the predicted number of events (the estimated counterfactual) differs significantly from the observed number of events shortly after $t$, we declare a change-point. 
More details on how we design our predictor can be found in Section \ref{sec:techniques}.

\begin{rem}[Asymptotic notation]
The various asymptotic quantities in Theorem \ref{thm:main_simplified} (i.e., $A, B$ and $| \hat{t}-  t^*|$) can be made more precise. In our general results, we show that if $A \ge B^{1/2 + \epsilon}$ for some $\epsilon > 0$, then $|\hat{t} - t^*| \lesssim 1/B^{2 \epsilon}$.
More details can be found in Section \ref{subsec:smooth-jump-results} and Theorem \ref{thm:smooth-plus-jump}.
\end{rem}

\begin{rem}[Non-asymptotic results]
Our general results provide insight into non-asymptotic regimes as well, in which case more assumptions on the smoothness of $x,y$ are needed. Some nuances that arise include:
\begin{itemize}
    \item The function $y$ cannot decay to 0 too quickly, otherwise changes will be too transient to be detectable. We formalize this by assuming a bound on higher-order derivatives of $y$. See Definition \ref{def:Eabrupt} for more details.
    \item If higher-order derivatives of $x$ are large, then potential fluctuations in $x$ can mask abrupt changes caused by $y$. We control for such effects by assuming a bound on higher-order derivatives of $x$. See Assumption \ref{as:regularity} for more details.
\end{itemize}
As a result of these effects, in our general results the parameter $B$ in Theorem \ref{thm:main_simplified} is replaced by a different parameter that captures the ``smoothness" of $x$; more details can be found in Section \ref{sec:techniques}. We note that for the setting of Theorem \ref{thm:main_simplified}, this smoothness parameter is equal to $B$ up to constant factors.
\end{rem}

\begin{rem}[Applicability to more complex models]
Our general results (Theorem~\ref{thm:derivative_thresholding}) go beyond the simplified setting we have discussed so far in several ways: there may be multiple abrupt changes, there may exist small jumps in the rate function in addition to large, abrupt jumps, and $\Lambda$ itself is allowed to be a random process with \cadlag sample paths. This latter generalization is especially important for classes of self-exciting point processes such as epidemic models and Hawkes processes. From a technical perspective, the essential ideas are the same for analyzing deterministic and random rate functions, though several nuances need to be taken into consideration. 
See Section~\ref{subsec:general_result} for more details. 
\end{rem}

\subsection{Detecting multiple change-points}\label{subsec:smooth-jump-results}

In this section, we discuss our results on detecting multiple change-points in the case where $\Lambda$ is an unknown piecewise smooth function that experiences abrupt jump discontinuities.
In particular, we consider the following generalization of \eqref{eq:basic_rate_function}.

\begin{defn}[Smooth + jump rate function]
\label{def:smooth_plus_jump}
Let $T > 0$ be a time horizon, and let $\cD \subset [0,T]$ be a finite set with $0 \in \cD$. Let $t_1, \ldots, t_m$ denote the elements of $\cD$ sorted in increasing order. Additionally, let $\{A_i \}_{i = 1}^m$ be a collection of positive constants, and let $\{ x_i \}_{i =1}^m$ be a collection of smooth functions satisfying $x_i : \mathbb{R}_{\ge 0} \to \mathbb{R}_{\ge 0}$ and $x_i(0) > 0$ for all $i \in \{1, \ldots, m \}$. 
We say that $\Lambda$ is a \emph{smooth + jump rate function} with respect to $\cD$, $\{A_i \}_{i = 1}^m$, and $\{ x_i \}_{i = 1}^m$ if it has the form
\begin{equation}
\Lambda(t) = \sum_{i =1}^m A_i x_i(t- t_i) \mathbf{1}(t \ge t_i).
\end{equation}
\end{defn}

We remark that the conditions $x_i(0) > 0$ and $A_i > 0$ for all $i \in \{1, \ldots, m \}$ ensure that all the jumps are positive. In principle, our results can also handle negative jumps, but one would have to impose additional conditions to ensure that the rate function is non-negative. As before, we will assume that $\cD$ and $\{ x_i \}_{i = 1}^m$ are fixed as the $A_i$'s grow large. 

We next discuss the characteristics of jumps in $\Lambda$. We allow for small jumps in addition to the large, abrupt jumps we aim to detect, provided they are well-separated in magnitude. 

\begin{assumption}[Jump sizes]
\label{as:smooth_jump_jump_sizes}
Assume that there exists $\theta > 0$ such that for all $j \in \{ 1, \ldots, m \}$, either $A_j \ge ( \sum_{i = 1}^m A_i )^\theta$ or $A_j = (\sum_{i = 1}^m A_i)^{o(1)}$, where $o(1) \to 0$ as $\sum_{i = 1}^m A_i \to \infty$. The set of abrupt jumps $\cT$ is given by all those in the former case, that is
\[
\cT : = \left \{ t_j \in \cD \setminus \{0 \} : A_j \ge \Big( \sum_{i = 1}^m A_i \Big)^\theta \right \}.
\]
\end{assumption}

We remark that the quantity $\sum_{i = 1}^m A_i$ serves as an orderwise upper bound for the maximum value of the rate function, and essentially takes on the same role as the parameter $B$ in Theorem \ref{thm:main_simplified}. With this interpretation, the set $\cT$ therefore captures jumps that are larger than some power of the maximum baseline rate of events. We exclude possible jumps at $t = 0$ due to the lack of any information before this point in time.

Before stating our results, we introduce some useful metrics for assessing the effectiveness of change-point detection algorithms. 
Given a set of estimated change-points $\widehat{\cT}$, a basic requirement is that all change-points should be detected; that is, $| \widehat{\cT}| = | \cT|$. Given that $| \widehat{\cT}| = | \cT|$, we can also measure the accuracy of estimated change-points through the following metric. 

\begin{defn}
Let $\cS, \cT$ be finite subsets of $\R$ with $|\cS| = |\cT|$. 
If $\cS = \cT = \emptyset$, we let $\cpdist(\cS, \cT) = 0$.
Otherwise, write the elements of $\cS$ and $\cT$ in increasing order as $s_1, \ldots, s_m$ and $t_1, \ldots, t_m$, respectively. We define the maximal distance between ordered elements as
\[
\cpdist(\cS, \cT) : = \max_{i \in \{1, \ldots, m \}} |s_i - t_i |.
\]
\end{defn}

We remark that these metrics are natural, and have been used in prior work on detecting and estimating multiple change-points; see for instance \cite{wang2023detecting}.
With these definitions and assumptions in place, we have the following result on change-point detection for the smooth + jump model. The full proof details can be found in Section \ref{sec:smoothjump}.

\begin{thm} 
\label{thm:smooth-plus-jump}
Let $T, \epsilon, \spacing > 0$ be fixed.
Suppose that: 
\begin{enumerate}
    \item $\Lambda$ is a smooth + jump rate function as described in Definition \ref{def:smooth_plus_jump};
    \item Assumption \ref{as:smooth_jump_jump_sizes} holds with $\theta = 1/2 + \epsilon$;
    \item it holds that $\cT  \subset (0,T)$ and $\sep(\cT) \ge \spacing$.
\end{enumerate}
Then there is an algorithm that outputs a set of time indices $\widehat{\cT}$ such that, as $\sum_{i = 1}^m A_i \to \infty$, it holds with probability $1 - o(1)$ that $| \cT | = | \widehat{\cT}|$ and $\cpdist(\cT, \widehat{\cT}) \le ( \sum_{i = 1}^m A_i )^{- 2\epsilon + o(1)}$.
\end{thm}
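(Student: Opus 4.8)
The plan is to reduce Theorem~\ref{thm:smooth-plus-jump} to the general change-detection result of Theorem~\ref{thm:derivative_thresholding}, whose engine is the derivative-thresholding algorithm (Algorithm~\ref{alg:derivative_thresholding}). The key conceptual point is that the smooth~+~jump rate function of Definition~\ref{def:smooth_plus_jump} is, at any time $t$ between consecutive elements of $\cD$, a genuinely smooth function, and the only breaks in smoothness are the jump discontinuities at the $t_j$'s. So the algorithm scans windows $[t-\delta,t]$, fits a degree-$d$ polynomial to the event counts, forecasts the counterfactual on $[t,t+\delta]$, and flags $t$ when the residual --- equivalently, by Lemma~\ref{lemma:derivative_characterization}, the order-$(d{+}1)$ discrete derivative $\Delta_\delta^{(d+1)}N$ --- exceeds a threshold $\eta$. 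The parameters $\delta, d, \eta$ must be chosen (as functions of $\Sigma := \sum_{i=1}^m A_i$) so that three things happen simultaneously: (i) no false alarms at smooth points or at the small jumps in $\cD\setminus\cT$; (ii) detection, with delay $o(1)$, of each abrupt jump in $\cT$; (iii) the output $\widehat\cT$ satisfies $|\widehat\cT|=|\cT|$, which requires that the detection events near distinct abrupt jumps do not interfere — this is where $\sep(\cT)\ge\kappa$ is used, to take $\delta \ll \kappa$ so each abrupt jump is isolated in its own scan window.

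First I would set up the polynomial-approximation bound. By Theorem~\ref{thm:approximation_error}, on any interval free of jumps the point process is approximated by the integral of a degree-$d$ Taylor polynomial of $\Lambda$ with error controlled by $\delta^{d+1}$ times a bound on $\Lambda^{(d+1)}$, plus a stochastic fluctuation term of order $\sqrt{\Sigma\,\delta}$ (since $\Sigma$ upper-bounds $\Lambda$ up to constants, as noted after Assumption~\ref{as:smooth_jump_jump_sizes}). To make the deterministic error small relative to the stochastic one I pick $\delta = \Sigma^{-\alpha}$ for a suitable $\alpha\in(0,1)$; then $\sqrt{\Sigma\delta} = \Sigma^{(1-\alpha)/2}$ is the noise floor. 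The threshold $\eta$ is placed just above this floor, say $\eta = \Sigma^{(1-\alpha)/2 + o(1)}$. A union bound over an $O(T/\delta)$-sized grid of candidate times, using Poisson/Chernoff tails, gives that w.h.p.\ no smooth window exceeds $\eta$ — this handles the false-alarm requirement at ordinary smooth points.

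Second, the small jumps: at $t_j\notin\cT$ we have $A_j = \Sigma^{o(1)}$, so the contribution of such a jump to the discrete derivative over a window of width $\delta$ is at most $A_j\,\delta\cdot\mathrm{poly}(1/\delta) = \Sigma^{o(1)}$, which is still below $\eta$ — so these do not trigger the algorithm. At an abrupt jump $t_j\in\cT$ with $A_j\ge\Sigma^{1/2+\epsilon}$, the term $A_j x_j(t-t_j)\mathbf 1(t\ge t_j)$ contributes $\asymp A_j\delta$ to the count differential across $t_j$ (using $x_j(0)>0$), hence order $A_j\delta = A_j\Sigma^{-\alpha} = \Sigma^{1/2+\epsilon-\alpha}$ to the discrete derivative. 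For this to exceed $\eta\asymp\Sigma^{(1-\alpha)/2}$ we need $1/2+\epsilon-\alpha > (1-\alpha)/2$, i.e.\ $\alpha < 2\epsilon$; choosing $\alpha = 2\epsilon - o(1)$ and noting the detection window has width $\delta = \Sigma^{-\alpha} = \Sigma^{-2\epsilon+o(1)}$, the flagged time $\hat t_j$ lies within $O(\delta)$ of $t_j$, giving $\cpdist(\cT,\widehat\cT)\le\Sigma^{-2\epsilon+o(1)}$ as claimed. The hypothesis $\sep(\cT)\ge\kappa$ with $\delta\ll\kappa$ guarantees the clusters of flagged grid points around distinct $t_j$'s are disjoint, so collapsing each cluster to one estimate yields exactly $|\cT|$ outputs; that $\cT\subset(0,T)$ ensures each abrupt jump has a full window $[t_j,t_j+\delta]$ inside the observation horizon to be detected.

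The main obstacle I expect is the tension in the exponent budget: the deterministic Taylor-remainder error must be dominated by both the noise floor \emph{and} the signal from an abrupt jump, which forces a bound on $\Lambda^{(d+1)}$ on jump-free intervals. Since the $x_i$ are fixed smooth functions, $\sup|x_i^{(d+1)}|$ is a constant, but $\Lambda^{(d+1)}$ on an interval picks up contributions $A_i x_i^{(d+1)}(t-t_i)$ from \emph{all} earlier jumps $t_i$, so the relevant bound is $O(\Sigma)$ — which must be checked to be compatible, via the choice of $d$, with $\delta^{d+1}\cdot\Sigma \ll \eta$. This amounts to taking $d$ large enough (depending on $\epsilon$) that $\Sigma^{-\alpha(d+1)}\cdot\Sigma \ll \Sigma^{(1-\alpha)/2}$, i.e.\ $d+1 > (1 + (1-\alpha)/2)/\alpha$; any such fixed $d$ works, and verifying that this is consistent with all the constraints above — together with carefully tracking the $o(1)$ terms so they genuinely vanish as $\Sigma\to\infty$ — is the bookkeeping core of the proof. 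Everything else is an application of Theorem~\ref{thm:derivative_thresholding} with these parameter choices and a cluster-pruning post-processing step.
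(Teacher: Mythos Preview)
Your proposal is correct and follows essentially the same route as the paper: verify that the smooth\,+\,jump model satisfies the regularity hypotheses of the general framework (Assumptions~\ref{as:regularity}--\ref{as:lambda_k} and the event $\Eabrupt$), then invoke Theorem~\ref{thm:derivative_thresholding} and the packing post-processing of Proposition~\ref{prop:algorithm}. The only cosmetic difference is that the paper parameterizes by the derivative order $\ell$ and sets $\delta = S_{\le\ell+1}^{-1/(2\ell+1)}$ to balance the two error terms exactly, whereas you fix $\delta=\Sigma^{-\alpha}$ with $\alpha$ just below $2\epsilon$ and then take $d$ large---both choices give $\cpdist\le\Sigma^{-2\epsilon+o(1)}$, and your handling of the small jumps directly is equivalent to the paper's absorption of them into the $J_k$ parameter.
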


The theorem naturally generalizes Theorem \ref{thm:main_simplified} to the case of multiple change-points, and states that all abrupt changes of magnitude larger than $\sqrt{\sum_{i = 1}^m A_i}$ can be accurately detected. 
The only significant addition to this theorem is the condition that $\sep(\cT) \ge \spacing$ for some $\spacing  >0$; that is, all abrupt changes are separated by at least $\spacing$. This condition is useful in eliminating edge cases where a cluster of change-points may occur in quick succession, making it difficult to discern the correct number of change-points from the data. The algorithm used in Theorem \ref{thm:smooth-plus-jump} crucially uses the value of $\spacing$ to ensure that all change-points are properly identified.

\subsection{Detecting abrupt changes in network epidemic models}\label{subsec:SI-results}

Abrupt changes of the type we study arise naturally in point processes driven by complex networks. 
This idea was recently studied by Mossel and Sridhar \cite{mossel2024finding} in the context of network epidemic models. Our general results on change-point detection, translated to the setting of network epidemics, settle open questions from \cite{mossel2024finding}. We elaborate on the details below.

Our results are based on the Susceptible-Infected (SI) process on graphs, which is a point process that models the occurrence of infection events as an epidemic spreads through a population (see \cite[Chapter 7]{brauer2012mathematical} and references therein). 
At a high level, the model operates as follows. There is a population of individuals who are represented by the vertices of a graph $G = (V,E)$. Edges in $G$ represent potential interactions that can spread a disease from one individual to another.
Initially, only a single individual is infected with the disease, while all others are susceptible. Over time, the disease spreads via the edges of $G$ to the rest of the population (more precisely, it spreads to the connected component of $G$ containing the initial infective). 
The point process of interest is given by the sequence of vertex infection times caused by the epidemic.   
We remark that while the SI process does not account for many realistic aspects of epidemic data (e.g., recovery, re-infection, delays between infection and symptom onset), it serves as a useful baseline model for mathematical analysis. 

To formally define the model, we let $\cascade(t) \subset V$ denote the set of vertices that have been infected until and including time $t$. Initially, we have $\cascade(0) = \{v_0 \}$, where $v_0$ is the initial infective. 
Given $\cascade(t)$, the probability that an uninfected vertex $v \in V \setminus \cascade(t)$ becomes infected shortly after time $t$ is given by 
\begin{equation}\label{eq:SI-evolution}
    \p ( v \in \cascade(t + \epsilon) \vert \cascade(t) ) = \epsilon | \cN(v) \cap \cascade(t) | + o(\epsilon),
\end{equation}
where $o(\epsilon) \to 0$ faster than $\epsilon$ as $\epsilon \to 0$.
That is, edges from infected vertices spread the infection at rate 1 (which we set for notational simplicity; our results hold for any other constant infection rate). 
The observed point process is $I(t) : = | \cascade (t)|$, which is equal to the number of individuals infected until (and including) time $t$.

It was shown in \cite{mossel2024finding} that when a vertex of large degree (i.e., a super-spreader) gets infected, many of its neighbors become suddenly exposed to the disease, which causes an abrupt increase in the rate function of $I(t)$.
To see this more formally, let $\{ \cF_t \}_{t \ge 0}$ be the natural filtration corresponding to the evolution of the SI process. Then the rate function of $I(t)$ is given by
\begin{align*}
\Lambda(t) & : = \lim_{\epsilon \to 0} \frac{\E [ I(t + \epsilon) - I(t) \vert \cF_t]}{\epsilon} \\
& = \lim_{\epsilon \to 0} \frac{1}{\epsilon} \sum_{v \in V \setminus \cascade(t)} \p ( v \in \cascade(t + \epsilon) \vert \cF_t ) \\
& = \sum_{v \in V \setminus \cascade(t)} | \cN(v) \cap \cascade(t) | \\
& =  \cut( \cascade(t) ),
\end{align*}
where, for a set $S \subseteq V$, $\cut(S)$ is the \emph{graph cut} corresponding to $S$; i.e., the number of edges in $G$ between $S$ and $V \setminus S$. 
Observe that $\Lambda(t)$ as defined above is piecewise constant, with jumps that coincide with infection events. In a bit more detail, if vertex $v$ becomes infected at time $t$, the jump in the rate function is given by 
\[
\Lambda(t) - \Lambda(t^-) = | \cN(v) \setminus \cascade(t) | - | \cN(v) \cap \cascade(t) |  =   \deg(v) - 2 | \cN(v) \cap \cascade(t) | .
\]
When $\deg(v)$ is large enough, it holds with high probability that $| \cN(v) \cap \cascade(t) | \ll \deg(v)$; see \cite[Lemma 15]{mossel2024finding}.
Consequently, the infection events of high-degree vertices can be identified by estimating when correspondingly large abrupt changes occur in the rate function of $I(t)$.

To formalize this approach to detecting high-degree vertices, it is useful to have a well-defined notion of ``high degree" and ``low degree". 
Formally, we assume that most vertices in $G$ have a relatively small degree, except for at most a constant number of vertices of substantially large degree. We precisely capture this structural property through the family of graphs described below.

\begin{defn}
\label{def:G_conditions}
We say that $G \in \cG(n,m,p,d,D)$ if and only if (1) $G$ is a connected graph on $n$ vertices; 
(2) there are at most $m$ vertices of degree at least $D$ and all other vertices have degree at most $d$;
(3) if $\deg(u), \deg(v) \ge D$ then the shortest-path distance between $u$ and $v$ is at least $p$.
For brevity, we will often instead write $\cG$ when the values of $n, m, p, d, D$ are clear.
Additionally, for $G \in \cG$ we define 
$\highdeg(G)$ to be the set of vertices of degree larger than $D$.
\end{defn}

We make a few remarks about the definition above. 
The assumption of connectivity is necessary, since if certain vertices are never infected, it is impossible to learn anything about them. 
Additionally, the assumption that high-degree vertices are separated by some minimal distance $p$ ensures that the \emph{temporal} effects of distinct high-degree vertices can also be distinguished.\footnote{The choice of $p$ serves a similar purpose as the condition $\sep( \cT) \ge \spacing$ in the statement of Theorem \ref{thm:smooth-plus-jump}. For further details on the impact of $p$, see Section~\ref{sec:SIprocess}.}
While control over the minimal separation was not needed in \cite{mossel2024finding}, our results tackle a more subtle regime, and more precise control over structural aspects of $G$ is needed to avoid overcomplicating our analysis.

Next, we assume the following about the scaling of the parameters $m,p, d, D$ with respect to $n$.

\begin{assumption}
\label{as:graph_parameters}
We assume that as $n \to \infty$, $m$ is constant, $p = \omega(1)$, $d = n^{o(1)}$ and $D = n^\alpha$ for some $\alpha \in (0,1)$.
\end{assumption}

This assumption is same as the one made by Mossel and Sridhar \cite{mossel2024finding}, except for the additional condition on $p$, which ensures that high-degree infection events are temporally separated. We remark that if $m \in \{0,1 \}$, then the value of $p$ plays no role in the structural properties of $G$.

Our main results for the inference of high-degree vertices in the SI process settle two fundamental questions:
\begin{enumerate}
    \item \emph{(Detection)} When can the presence of high-degree vertices in $G$ be detected?
    \item \emph{(Estimation)} When can high-degree vertices be precisely identified, possibly using additional information?
\end{enumerate}

We show that for both questions, there is a sharp phase transition at $\alpha = 1/2$ between the possibility and impossibility of solution concepts.
In particular, this settles the main open questions raised by Mossel and Sridhar \cite{mossel2024finding} in recent work.

\subsubsection{Detecting high-degree vertices.}
We first consider the problem of \emph{detecting the presence of high-degree vertices}.
We do so by
estimating the infection times of high-degree vertices, given by 
\[
\cT : = \{ t_v : v \in \highdeg(G) \},
\]
which correspond to abrupt changes in the rate function.

\begin{thm}[Detecting high-degree vertices]
\label{thm:detecting_high_degree}
Suppose that $G \in \cG$ satisfies Assumption \ref{as:graph_parameters} with $\alpha \in (1/2, 1)$. 
Then there is an algorithm that outputs a set of time indices $\widehat{\cT}$ such that, as $n \to \infty$, it holds with probability $1 - o(1)$ that $| \cT | = | \widehat{\cT}|$ and $\cpdist(\cT, \widehat{\cT}) \le n^{-(2\alpha - 1) + o(1)}$.
\end{thm}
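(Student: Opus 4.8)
\medskip

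The plan is to deduce Theorem~\ref{thm:detecting_high_degree} from the general change-point detection result, Theorem~\ref{thm:derivative_thresholding}, applied to the infection process $I(t) = |\cascade(t)|$, whose rate function is the random, \cadlag process $\Lambda(t) = \cut(\cascade(t))$ computed in Section~\ref{subsec:SI-results}. We declare the abrupt changes of $\Lambda$ to be exactly the high-degree infection times $\cT = \{t_v : v \in \highdeg(G)\}$, and take the smoothness parameter governing the regular part of $\Lambda$ to be $B := n^{1+o(1)}$, which dominates $\Lambda$ since $\cut(\cascade(t)) \le |E| \le n^{1+o(1)}$ under Assumption~\ref{as:graph_parameters}. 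With $D = n^{\alpha}$ playing the role of the minimal jump size, $\alpha > 1/2$ is precisely the condition ``jump size $\gg \sqrt{B}$'', so running Algorithm~\ref{alg:derivative_thresholding} with a large constant polynomial degree $\ell = \ell(\alpha)$ and window $\delta = n^{-(2\alpha-1)+o(1)}$ should recover $\cT$ with $\cpdist$-error $O(\delta)$. It thus suffices to verify, with probability $1-o(1)$ over the SI dynamics, the hypotheses of Theorem~\ref{thm:derivative_thresholding} for this rate function.

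Three points need checking. \emph{(i) Large, persistent jumps.} When $v \in \highdeg(G)$ is infected, $\Lambda$ jumps by $\deg(v) - 2|\cN(v) \cap \cascade(t_v^-)|$; by \cite[Lemma 15]{mossel2024finding}, $|\cN(v) \cap \cascade(t_v^-)| = o(\deg(v))$ with high probability, so the jump is at least $(1-o(1))\deg(v) \ge n^{\alpha - o(1)}$. Since at most $O(D\delta) = o(D)$ further infections occur in $[t_v, t_v+\delta]$, each lowering the contribution of $v$'s incident edges to $\cut$ by at most one, $\Lambda$ stays above $(1-o(1))D$ on that window, verifying the non-decay condition of Definition~\ref{def:Eabrupt}. \emph{(ii) Temporal separation.} Using that distinct high-degree vertices are at graph-distance at least $p = \omega(1)$, together with an analysis of the time it takes the cascade (whose rate never exceeds $B$) to reach a high-degree vertex starting from a distance of at least $p$, one shows that with high probability $\sep(\cT) \ge \spacing$ for some $\spacing = \spacing(n)$ with $\spacing \gg \delta$; this is exactly what Algorithm~\ref{alg:derivative_thresholding} requires to output the right cardinality $|\widehat{\cT}| = |\cT|$ and to pair estimates with the correct infection times. \emph{(iii) Regularity of the baseline.} The evolution of $\cut(\cascade(\cdot))$ on intervals free of high-degree infections must satisfy the regularity hypothesis of Theorem~\ref{thm:derivative_thresholding} with parameter $B$; concretely, the order-$(\ell+1)$ discrete derivative of $I$ over windows of length $O(\delta)$ must have magnitude $O(\sqrt{B\delta})$ up to $n^{o(1)}$ factors, so it never exceeds the detection threshold away from a true change-point.

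Point (iii) is the main obstacle. On a window of length $\delta$ the cut is \emph{not} slowly varying: it can move through $\Theta(B\delta) = n^{2-2\alpha+o(1)}$ jumps of size up to $d = n^{o(1)}$, and a naive union bound over them is only useful when $\alpha > 2/3$. Reaching the sharp threshold $\alpha > 1/2$ requires exploiting cancellation, via an iterated Dynkin (Doob--Meyer) decomposition. At the first level, $\cut(\cascade(t)) = \cut(\cascade(0)) + \int_0^t \mu_1(s)\,ds + M_1(t)$, where $M_1$ is a martingale with jumps bounded by $d$ whose predictable quadratic variation over any window of length comparable to $\delta$ is at most $d^2 \cdot (\text{number of events in the window}) \lesssim d^2 B\delta = n^{1+o(1)}\delta$, and the compensator $\mu_1(s) = \sum_{w \notin \cascade(s)} |\cN(w) \cap \cascade(s)|\,(\deg(w) - 2|\cN(w) \cap \cascade(s)|)$ obeys the self-bounding estimate $|\mu_1| \le d\,\Lambda \le dB = n^{1+o(1)}$. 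Decomposing $\mu_1$ in the same way and iterating $k = k(\alpha)$ times, each step costs only a factor $d^{O(1)} = n^{o(1)}$ and pushes the compensator one level deeper. Then, at every level $j \le k$, the martingale contribution to the order-$(\ell+1)$ discrete derivative of $I$ is at most $\delta^{j} \cdot \sqrt{n^{1+o(1)}\delta} = n^{1-\alpha+o(1)}\,\delta^{j}$ --- this uses Freedman's inequality together with a union bound over the $n^{O(1)}$ relevant windows --- which is at most $\sqrt{B\delta}\,n^{o(1)} = n^{1-\alpha+o(1)}$; meanwhile the order-$(\ell+1)$ difference annihilates the polynomial-degree-$(\le k)$ terms generated along the way provided $\ell \ge k$. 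The only surviving term is the $(k{+}1)$-fold integral of the deepest compensator $\mu_{k+1}$, which the crude bound $|\mu_{k+1}| \le n^{1+o(1)}$ controls by $\delta^{k+1} n^{1+o(1)}$; this is $O(\sqrt{B\delta})$ exactly when $\alpha \ge \frac{k+1}{2k+1}$, and since $\frac{k+1}{2k+1} \downarrow \frac{1}{2}$, it holds for every fixed $\alpha > 1/2$ once $k$ is a large enough constant. Making this iteration rigorous --- tracking the interplay of the compensator and quadratic-variation bounds and the $\ell$-dependence of constants --- is the delicate core of the proof, and is where the SI-process concentration estimates of \cite{mossel2024finding} are used most heavily. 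With (i)--(iii) established, Theorem~\ref{thm:derivative_thresholding} applies directly and gives $|\cT| = |\widehat{\cT}|$ and $\cpdist(\cT, \widehat{\cT}) \le O(\delta) = n^{-(2\alpha-1)+o(1)}$, completing the proof.
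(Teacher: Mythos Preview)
Your high-level plan is correct and matches the paper's: feed the SI process into Theorem~\ref{thm:derivative_thresholding}, verifying the jump, separation, and regularity hypotheses. But your execution of (iii) and (i) diverges from the paper in a way worth flagging.

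For (iii), you propose to carry out an iterated Dynkin/Doob--Meyer decomposition of $\cut(\cascade(t))$ by hand, bounding each compensator $\mu_j$ and each martingale $M_j$ via Freedman's inequality. This is essentially re-proving Theorem~\ref{thm:approximation_error} inline: your compensators $\mu_j$ \emph{are} the $\lambda^{(j+1)}$'s of Section~\ref{subsec:general_result} (modulo the high-degree exclusion), and the Freedman-plus-union-bound step is exactly the content of Lemmas~\ref{lemma:polynomial_approximation_fixed_t}--\ref{lemma:discretization}. To invoke Theorem~\ref{thm:derivative_thresholding} as you state, you only need to verify Assumptions~\ref{as:regularity}--\ref{as:lambda_k}, i.e., to compute the $\lambda^{(k)}$'s for the SI process and bound $S_k,J_k,L_k$. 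The paper does this not by iterating a recursion, but by identifying $\lambda^{(k)}(t)$ \emph{explicitly} as a signed sum of marked-tree homomorphism counts in the boundary of $\cascade(t)$ (Lemma~\ref{lemma:si_higher_derivative}). From this combinatorial formula the bounds $|\lambda^{(k)}|\le n(2kd^k)^k$ and $|\lambda^{(k)}(t_u)-\lambda^{(k)}(t_u^-)|\le \deg(u)(2kd^k)^{k+1}$ (Lemma~\ref{lemma:SI-properties}) follow \emph{deterministically}, giving $S_k=n^{1+o(1)}$, $J_k=n^{o(1)}$ with no further probabilistic work. Your self-bounding heuristic $|\mu_1|\le d\Lambda$ is the first instance of this, but the marked-tree formula is what makes the higher levels clean.

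For (i), your argument that ``$\Lambda$ stays above $(1-o(1))D$ on $[t_v,t_v+\delta]$'' is not what $\Eabrupt$ (Definition~\ref{def:Eabrupt}) asks for. The condition there is that the jumps of $\lambda^{(k)}$ at $t_v$, for $2\le k\le\ell$, are at most $\rho$ times the jump of $\lambda^{(1)}$; in the paper this is immediate from Lemma~\ref{lemma:SI-properties}(\ref{item:SI_generic_jump}), giving $\rho=n^{o(1)}$. Also, your claim ``at most $O(D\delta)$ further infections occur'' is not obviously correct (the total rate can be of order $n$, not $D$), but fortunately this claim is not needed once you use the correct form of $\Eabrupt$.
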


In other words, Theorem \ref{thm:detecting_high_degree} shows that for $G \in \cG$, high-degree vertices with at least $n^{1/2 + \epsilon}$ neighbors for some $\epsilon > 0$ can be detected with high probability. Furthermore, Theorem \ref{thm:detecting_high_degree} generalizes a recent result of Mossel and Sridhar \cite{mossel2024finding}, who proved a version of Theorem \ref{thm:detecting_high_degree} for $\alpha > 3/4$.
To establish our results, it is not enough to sharpen the techniques of Mossel and Sridhar; we make use of new change-point estimators that overcome fundamental limitations of their prior work. We defer a detailed discussion to Sections~\ref{sec:techniques} and~\ref{sec:SIprocess}.

In light of the recent work \cite{mossel2024finding}, Theorem \ref{thm:detecting_high_degree} is essentially best possible: if $\alpha < 1/2$, there exist cases where it is information-theoretically impossible to tell whether there even exists a high-degree vertex in the graph. Formally, we have the following result.

\begin{thm}[Impossibility of detection]
\label{thm:high_degree_detection_impossibility}
Let $\alpha \in (0, 1/2)$ and suppose that $D = n^\alpha$, $d \ge \log^2 n,$ and $m = 1$.
Then there exists an ensemble $\cH \subset \cG$ and a probability distribution $\mu$ on graphs in $\cH$ and source vertices $v_0$ such that it is impossible to tell with probability greater than $1/2 + o(1)$ whether there exists a high-degree vertex in $(G,v_0) \sim \mu$.
\end{thm}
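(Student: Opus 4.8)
\emph{Proof strategy.} The plan is a two‑point (Le Cam) argument: we construct two graphs $G_0, G_1$ on the same vertex set with a common source $v_0$ — $G_0$ having no vertex of degree $\ge D$, $G_1$ having exactly one — such that the laws of the observed process $I(\cdot) = |\cascade(\cdot)|$ under the two SI processes are within $o(1)$ in total variation. Taking $\cH = \{G_0, G_1\}$ and $\mu = \tfrac12\delta_{(G_0,v_0)} + \tfrac12\delta_{(G_1,v_0)}$ then gives the claim, since no test can distinguish two $o(1)$‑close distributions with probability exceeding $\tfrac12 + o(1)$. (As $m = 1$, the minimal‑separation condition in Definition~\ref{def:G_conditions} is vacuous, so we only need connectivity and the degree constraints.)

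The heart of the construction, in the spirit of Proposition~\ref{prop:impossibility}, is a ``background'' graph $H$ of bounded degree whose SI process has a large, flat rate function over a long window. Concretely, fix $B$ with $D^2 \ll B$ and $B\cdot\mathrm{polylog}(n) \le n$ — possible precisely because $\alpha < 1/2$ — and let $H$ be a complete $(d-2)$‑ary tree of depth $h = \Theta(\log_d B)$ (hence $\approx B$ vertices) with a pendant path of $\ell = \Theta(\log n)$ vertices hung off every tree vertex; the hypothesis $d \ge \log^2 n$ leaves ample room to fit this inside $n - D - 1$ vertices. One checks that with high probability the tree is fully infected by time $h + O(\sqrt h)$ and the first pendant path is exhausted only around time $\ell - O(\sqrt\ell)$; on that event, for all $t$ in the resulting window $W$ (of length $\Theta(\log n)$) every one of the $\approx B$ pendant paths is mid‑traversal and contributes exactly one boundary edge, so $\cut(\cascade(t)) = B$, and $I_H$ advances over $W$ as a rate‑$B$ Poisson process.

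Now let $G_1$ be $H$ together with a new vertex $w$ joined to a pendant‑path vertex infected near the midpoint of $W$, plus $D$ leaf‑neighbours of $w$; then $\deg(w) = D+1 \ge D$, every other degree is $\le d$, and $|V(G_1)| = n$. Let $G_0$ be $H$ together with $D+1$ extra leaves attached to distinct pendant‑path vertices whose infection times lie in $W$; then $G_0$ has no high‑degree vertex and $|V(G_0)| = n$. Both graphs are connected, so $G_0, G_1 \in \cG$. Since neither the leaves nor the pendant vertex $w$ can transmit infection back into $H$, the infection process of $H$ is unaffected by the added gadget, and we couple the two SI processes so that the $H$‑infections occur at identical times. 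Under this coupling the observed processes agree outside $W$ (both equal $I_H$ before $W$ and $I_H + (D+1)$ after it), and inside $W$ they differ only by the superposition of the gadget's $D+1$ infections onto the common rate‑$B$ Poisson background: after $w$ is infected its $D$ leaves are taken at rate $1$ each, so this extra contribution is a transient ``bump'' of total mass $\approx D$ whose intensity decays like $De^{-s}$ (and likewise, with a diffuse rather than bursty shape, in $G_0$).

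It then remains to bound, in expectation over the process outside $W$ (which has the same law under both hypotheses), the total variation distance between the two conditional laws of $I|_W$. Over $W$ the background accrues $\approx B|W| \gg D^2$ events, with fluctuations of order $\sqrt{B|W|}\gg D$, whereas the distinguishing perturbation has total mass $D+1$ and — crucially — intensity never exceeding $O(D) \ll B$, so it never locally dominates the background; a Hellinger/likelihood‑ratio estimate exactly analogous to the proof of Proposition~\ref{prop:impossibility} then bounds each conditional total variation distance by $O\big(\sqrt{D^2/B}\big) + o(1) = o(1)$. This last step is the main obstacle and requires care on two counts: the perturbation in $G_1$ is not a deterministic intensity but a random point process ($D$ points at $t_w + \Exp(1)$ sharing a common random offset $t_w$), whose correlated shape must be controlled and shown not to create a detectable cluster — this is exactly where $D^2\ll B$, i.e.\ $\alpha < 1/2$, is used — while the identity $\cut\equiv B$ on $W$ holds only on a high‑probability event, contributing a further $o(1)$. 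The remaining ingredients (verifying the graph parameters, the coupling, and the Le Cam reduction) are routine.
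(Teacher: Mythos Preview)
The paper's proof is a citation: it invokes the construction in \cite{mossel2024finding} (their Appendix~D, Lemma~29), which produces two graph \emph{distributions} $\mu_\emptyset, \mu_v$ whose induced laws on infection times are $o(1)$-close in total variation, and notes that this immediately yields the detection lower bound. Your route is genuinely different and more explicit: you build two \emph{fixed} graphs sharing a flat rate-$B$ background (pendant paths hung off a shallow tree) and attach one of two small gadgets. This is a nice concrete instantiation of the Proposition~\ref{prop:impossibility} heuristic, and the construction is essentially sound --- with one slip: the $G_0$ leaves must be attached to deterministically chosen vertices (e.g.\ at path-depth $\approx \ell/2$), not to vertices ``whose infection times lie in $W$'', since the graph must be fixed before the process runs.

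The real gap is the TV bound, and the analogy to Proposition~\ref{prop:impossibility} is looser than your sketch allows. That proposition compares two Poisson processes with \emph{deterministic} intensities via a direct KL computation; here the perturbation is a random point process and you are comparing two perturbations rather than perturbation-versus-null. More importantly, the natural conditioning route fails outright: conditioning on the full labeled $H$-process makes $I_H$ deterministic, so the observer could subtract it and recover the gadget exactly, giving conditional TV close to $1$. The argument must therefore work with the \emph{marginal} law of the merged process, using that the observer cannot separate the $D+1$ gadget points from the $\Theta(B)$ background points in any unit-length window; formalizing this ``hiding'' requires a direct likelihood-ratio or coupling bound on the merged process, not a reduction to Proposition~\ref{prop:impossibility}. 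Finally, note that the result in \cite{mossel2024finding} is stated for observers who see the labeled cascade trace; with two fixed graphs the $D+1$ gadget vertices carry fixed labels, so an observer with access to labels could inspect their infection-time pattern directly. If that is the intended observation model you would need to randomize the gadget's location over many candidate vertices --- which is precisely what the distributional construction in \cite{mossel2024finding} accomplishes.
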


While not explicitly stated in \cite{mossel2024finding}, the result follows readily from their arguments. We defer the details to Section~\ref{subsec:SI-detection-proofs}. 

\subsubsection{Estimating high-degree vertices}

Information from a single cascade trace is not enough to precisely identify the set of high-degree vertices. 
Indeed, if vertices $u$ and $v$ are infected in quick succession, it becomes difficult to determine which vertex caused an abrupt jump in the infection rate.
To mitigate this identifiability issue, one can combine information from multiple cascade traces on the same graph.
To combine information in a reasonable way, we will assume access to the \emph{cascade trace}, which is the collection of infection times from the SI process and associated vertices. Formally, the cascade trace is given by $\mathbf{T} : = \{ (v, t_v) \}_{v \in V}$.
Given a positive integer $K$ and a collection of source vertices $\{ v_i \}_{i = 1}^K$, we observe the outcome of $K$ independent SI processes on the same graph $G$ which begin at different source vertices, in the form of the independent cascade traces $\mathbf{T}_1, \ldots, \mathbf{T}_K$.
Our main result on estimating high-degree vertices shows that when $\alpha > 1/2$, a constant number of cascade traces suffices to estimate them exactly.

\begin{thm}[Estimating high-degree vertices, $m = 1$]
\label{thm:estimating_high_degrees}
Suppose that $\alpha \in (1/2, 1)$. If $K > 1 / (2\alpha - 1)$, then there is an estimator $\widehat{\mathrm{HD}}$ such that for any $G \in \cG$ and any collection of source vertices $\{v_{i} \}_{i = 1}^K$, it holds with probability $1 - o(1)$ as $n \to \infty$ that $\widehat{\mathrm{HD}}(\mathbf{T}_1, \ldots, \mathbf{T}_K) = \highdeg(G)$.
\end{thm}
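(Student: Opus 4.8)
The plan is to reduce everything to the single‑cascade detection guarantee of Theorem~\ref{thm:detecting_high_degree}. For each $j\in[K]$ I would feed the infection‑count process $I^{(j)}(t)=|\cascade^{(j)}(t)|$ of cascade $\mathbf{T}_j$ into the algorithm of Theorem~\ref{thm:detecting_high_degree}. Since $m=1$, $\highdeg(G)$ is either $\emptyset$ or a single vertex $v^*$. If the algorithm returns exactly one estimated change‑point $\hat t_j$, set $\eta := n^{-(2\alpha-1)+o(1)}$ (the error guarantee of that theorem) and define the candidate set
\[
C_j := \{\, v \in V : |t^{(j)}_v - \hat t_j| \le \eta \,\},
\]
where $t^{(j)}_v$ is $v$'s infection time in cascade $j$; otherwise set $C_j := \emptyset$. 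The estimator is $\widehat{\mathrm{HD}} := \bigcap_{j=1}^K C_j$.

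\emph{No false negatives.} If $\highdeg(G)=\{v^*\}$, then applying Theorem~\ref{thm:detecting_high_degree} to cascade $j$ (with $|\cT|=1$) gives, with probability $1-o(1)$, that the algorithm returns a single $\hat t_j$ with $|\hat t_j - t^{(j)}_{v^*}|\le\eta$, i.e.\ $v^*\in C_j$; a union bound over the constantly many cascades yields $v^*\in\widehat{\mathrm{HD}}$ with probability $1-o(1)$. If instead $\highdeg(G)=\emptyset$, the no‑false‑alarm part of Theorem~\ref{thm:detecting_high_degree} forces every $C_j=\emptyset$ with probability $1-o(1)$, hence $\widehat{\mathrm{HD}}=\emptyset$.

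\emph{No false positives.} Assume $\highdeg(G)=\{v^*\}$; I must rule out $u\in\widehat{\mathrm{HD}}$ for every $u\ne v^*$. The crux is the estimate that, for each fixed low‑degree vertex $u$ and each $j$,
\[
\p(u\in C_j) \;\le\; \p\big(|t^{(j)}_u - \hat t_j|\le\eta\big) \;\le\; n^{-(2\alpha-1)+o(1)}.
\]
Granting this, note that $C_1,\dots,C_K$ are functions of the $K$ independent cascades, hence independent, so $\p(u\in\widehat{\mathrm{HD}})=\prod_{j=1}^K\p(u\in C_j)\le n^{-K(2\alpha-1)+o(1)}$; a union bound over the at most $n$ vertices $u\ne v^*$ gives failure probability $\le n^{\,1-K(2\alpha-1)+o(1)}$, which tends to $0$ precisely because $K>1/(2\alpha-1)$, i.e.\ $K(2\alpha-1)>1$. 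Morally the key estimate says that $u$, whose infection rate never exceeds $d=n^{o(1)}$, is very unlikely to be infected within the tiny window of length $2\eta=n^{-(2\alpha-1)+o(1)}$ around the change‑point: conditioning on the count process $I^{(j)}$ (on which $\hat t_j$ alone depends), the window is fixed, the conditional probability that $u$ is responsible for any particular infection in the window is at most $\deg(u)/\cut(\cascade^{(j)}(\cdot))\le d/\cut(\cascade^{(j)}(\cdot))$, and there are about $\eta\cdot\cut$ infections in the window, with the cut staying within a constant factor of its value near $t^{(j)}_{v^*}$ by \cite[Lemma 15]{mossel2024finding}; the cut cancels and one is left with a bound of order $d\eta$.

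The main obstacle is making this last step rigorous: the window $[\hat t_j-\eta,\hat t_j+\eta]$ is \emph{not} adapted, since it is a function of the whole count process $I^{(j)}$, which itself depends on $u$'s precise infection time, so the rate bound for $u$ cannot be applied naively. I expect this can be handled by (i) showing $\hat t_j$ is stable under deleting a single event from $I^{(j)}$, so that it is essentially a function of the process with $u$'s event removed, together with (ii) a direct control of the number of infections near $t^{(j)}_{v^*}$ and of $\cut(\cascade^{(j)}(\cdot))$ there, of the same flavour as the cut estimates already developed for Theorem~\ref{thm:detecting_high_degree}. The assumption $m=1$ simplifies matters substantially: there is only one window to track and the separation parameter $p$ plays no role.
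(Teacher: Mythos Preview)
Your algorithm and overall strategy match the paper's exactly, and the ``no false negatives'' part is fine. The gap is precisely the adaptedness issue you flag in the false-positives step, and the paper resolves it not by a stability/deletion argument but by a much simpler reduction that you are one step away from: on the high-probability detection event you already have $|\hat t_j - t^{(j)}_{v^*}|\le \eta$, so $u\in C_j$ implies $|t^{(j)}_u - t^{(j)}_{v^*}|\le 2\eta$. Thus it suffices to bound, for each fixed low-degree $u$, the probability $\p\big(|t^{(j)}_u - t^{(j)}_{v^*}|\le 2\eta\big)$, a quantity involving only the \emph{true} infection times of $u$ and $v^*$ and no estimator at all. The adaptedness obstacle disappears.

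The paper then bounds this probability by a short coupling argument rather than by conditioning on the count process. Split on whether $v^*$ is a descendant of $u$ in the tree of first infections. If it is, then some edge incident to $u$ must fire within time $2\eta$ of $t_u$, which has probability at most $O(d\eta)$. If it is not, couple the cascade on $G$ with the cascade on $G\setminus\{u\}$ using the same edge clocks; on this event $t^{(j)}_{v^*}$ is unchanged by deleting $u$, so conditionally on the process without $u$ the target window is fixed, and the probability that any edge incident to $u$ fires in a fixed window of length $4\eta$ is again $O(d\eta)$. Either way you get $\p\big(|t^{(j)}_u - t^{(j)}_{v^*}|\le 2\eta\big)\le C d\eta = n^{-(2\alpha-1)+o(1)}$, after which your independence and union-bound conclusion goes through verbatim. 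Your heuristic (``rate $\le d$, window $\le 2\eta$, bound $\approx d\eta$'') is the right answer; the coupling is just the clean way to justify it without wrestling with the dependence between $\hat t_j$ and $t_u$.
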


The proof follows the approach of Mossel and Sridhar \cite{mossel2024finding}. In light of Theorem \ref{thm:detecting_high_degree}, the absolute difference between the infection time of a high-degree vertex and one of the estimated change-points is at most $n^{- (2 \alpha - 1) + o(1)}$ in all $K$ cascades.
On the other hand, we show that the probability that the infection time of a given low-degree vertex is within $n^{- (2 \alpha - 1) + o(1)}$ of an estimated change-point in all $K$ cascades is at most $n^{- K(2 \alpha - 1) + o(1)}$.
For $K > 1 / (2 \alpha - 1)$, this probability is $o ( 1/n)$.
Consequently, the proposed estimator, which identifies all vertices within $n^{- (2 \alpha - 1) + o(1)}$ of an estimated change-point across all cascades, correctly outputs the set of high-degree vertices with probability $1 - o(1)$.

Our result on estimation complements the following impossibility result established in \cite{mossel2024finding}.

\begin{thm}[Impossibility of estimation \cite{mossel2024finding}]
\label{thm:high_degree_estimation_impossibility}
Fix $\epsilon > 0$ and let $\alpha \in (0,1/2)$. Suppose that $D = n^\alpha$, $d \ge \log^2 n$ and $m = 1$. Additionally, assume that 
\[
K \le \left( \frac{1 - 2 \alpha - \epsilon}{5} \right) \frac{\log n}{\log \log n}.
\]
Then there exists an ensemble $\cH \subset \cG$ and a probability distribution $\mu$ over graphs in $\cH$ and source vertices $\{ v_1, \ldots, v_K \}$ such that, for any estimator $\mathrm{HD}$, 
\[
\p_{G, v_1, \ldots, v_K \sim \mu} ( \mathrm{HD} ( \mathbf{T}_1, \ldots, \mathbf{T}_K ) = \highdeg(G) ) = o(1),
\]
where $\p_{G, v_1, \ldots, v_K \sim \mu}$ denotes the probability distribution over cascade traces induced by $\mu$.
\end{thm}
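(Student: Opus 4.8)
The bound is proved in \cite{mossel2024finding}; I describe the natural route to it. The plan is a minimax lower bound: design a prior $\mu$ under which the identity of the high-degree vertex is hidden among a polynomial number of interchangeable candidates, and show that no function of the $K$ traces can pin it down. Concretely, fix a ``scaffold'' graph that ensures connectivity and all constraints of $\cG(n,1,p,d,D)$ apart from the placement of the high-degree vertex, and single out a set $u_1,\dots,u_L$ of $L = n^{\Omega_\alpha(1)}$ structurally equivalent candidate slots (the scaffold, before the high-degree edges are added, admits automorphisms that fix the sources and act transitively on $u_1,\dots,u_L$). The prior $\mu$ picks $J \in [L]$ uniformly, gives $u_J$ roughly $D = n^\alpha$ extra neighbours so that $\deg(u_J) > D$ and $\highdeg(G) = \{u_J\}$, and fixes the sources $v_1,\dots,v_K$ in advance. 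Under $\mu$, recovering $\highdeg(G)$ is the same as recovering $J$, so it suffices to prove that with probability $1-o(1)$ over the traces the posterior of $J$ remains spread over $n^{\Omega(1)}$ slots, since then any estimator is correct with probability at most $\E\left[ \max_j \p\big( J = j \mid \mathbf{T}_1,\dots,\mathbf{T}_K \big) \right] = o(1)$.

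The crux is a per-cascade indistinguishability estimate: for any two candidates $j,j'$, the likelihood ratio $\p(\mathbf{T}_i \mid J = j)/\p(\mathbf{T}_i \mid J = j')$ of a single trace lies, with probability $1-o(1)$, within $[(\log n)^{-O(1)}, (\log n)^{O(1)}]$; equivalently, one cascade ``uses up'' at most $O_\alpha(\log\log n)$ bits about $J$. Granting this, the traces are conditionally independent given $J$, so the posterior ratios $\p(J = j \mid \mathbf{T}_1,\dots,\mathbf{T}_K)/\p(J = j' \mid \mathbf{T}_1,\dots,\mathbf{T}_K)$ are, whp, within $(\log n)^{\pm O(K)}$; hence $\max_j \p(J = j \mid \mathbf{T}_1,\dots,\mathbf{T}_K) \le (\log n)^{O(K)}/L$, which is $o(1)$ precisely when $(\log n)^{O(K)} = o(L)$, i.e.\ when $K\log\log n \ll \log n$---and the admissible constant $(1-2\alpha-\epsilon)/5$ is obtained by quantifying $\Omega_\alpha(1)$ in $L = n^{\Omega_\alpha(1)}$ against the $O(1)$ in $(\log n)^{O(K)}$. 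The per-cascade estimate itself is where the hypotheses $\alpha < 1/2$ and $d \ge \log^2 n$ enter: the scaffold is built so that the $\approx D$ extra edges at $u_J$ are \emph{dynamically redundant}---their endpoints lie in a fast-spreading region and are infected at nearly the same times whether or not those edges are present---so the jump that $u_J$ contributes to the rate function $\Lambda(t) = \cut(\cascade(t))$ is of order $n^\alpha$, which, since $\alpha < 1/2$, is smaller than the $\Theta(\sqrt{\cut(\cascade(t))})$-scale fluctuations the baseline cut process exhibits over the relevant window (this is the same mechanism behind Theorem~\ref{thm:high_degree_detection_impossibility} and Proposition~\ref{prop:impossibility}), so the law of a single trace barely moves as $J$ varies.

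The main obstacle is making the per-cascade estimate rigorous, since the SI process is strongly dependent: every infection time is coupled to the entire history $\cascade(\cdot)$ through $\cut(\cascade(t))$. One must (i) engineer the scaffold so that, for every vertex whose infection time could betray $J$, that time is tightly concentrated both with and without $u_J$'s extra edges, and control the comparison by coupling the two SI processes; (ii) bound the total variation (equivalently, the typical likelihood ratio) between the two trace laws uniformly over the choice of source; and (iii) optimize $(L, d, p)$ within the $\cG(n,1,p,d,D)$ constraints to extract the stated constant. With the coupling estimate in hand, the posterior-flatness conclusion and the bookkeeping over the $K$ cascades are routine.
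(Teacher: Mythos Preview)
The paper does not prove this theorem: it is stated as a result of \cite{mossel2024finding} and is only used for context (to show that Theorem~\ref{thm:estimating_high_degrees} is essentially tight). So there is no ``paper's own proof'' to compare against; you correctly open by pointing to \cite{mossel2024finding}.

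Your sketch is a reasonable outline of the Bayesian many-hypotheses argument one expects here --- plant the high-degree vertex uniformly among $L = n^{\Omega_\alpha(1)}$ symmetric slots, bound the per-cascade log-likelihood ratio by $O(\log\log n)$ using the fact that an $n^\alpha$-sized jump is below the $\sqrt{n}$ noise floor, and multiply across $K$ independent cascades. That is indeed the mechanism behind both this result and Theorem~\ref{thm:high_degree_detection_impossibility} (which the paper \emph{does} sketch, via the TV bound from \cite{mossel2024finding}). The one place your write-up is vague is the per-cascade likelihood-ratio bound: you assert it is $(\log n)^{\pm O(1)}$ but do not indicate what coupling or information-theoretic inequality delivers this, nor why the constant comes out as $5$ in the denominator. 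Since the whole quantitative content of the theorem lives in that step, a reader cannot check from your sketch that the stated threshold $(1-2\alpha-\epsilon)/5$ is actually attained; you would need to either reproduce the construction and estimate from \cite{mossel2024finding} or give a self-contained argument for the per-cascade bound.
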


\begin{figure}[t]
  \centering
  \includegraphics[width=0.8\linewidth]{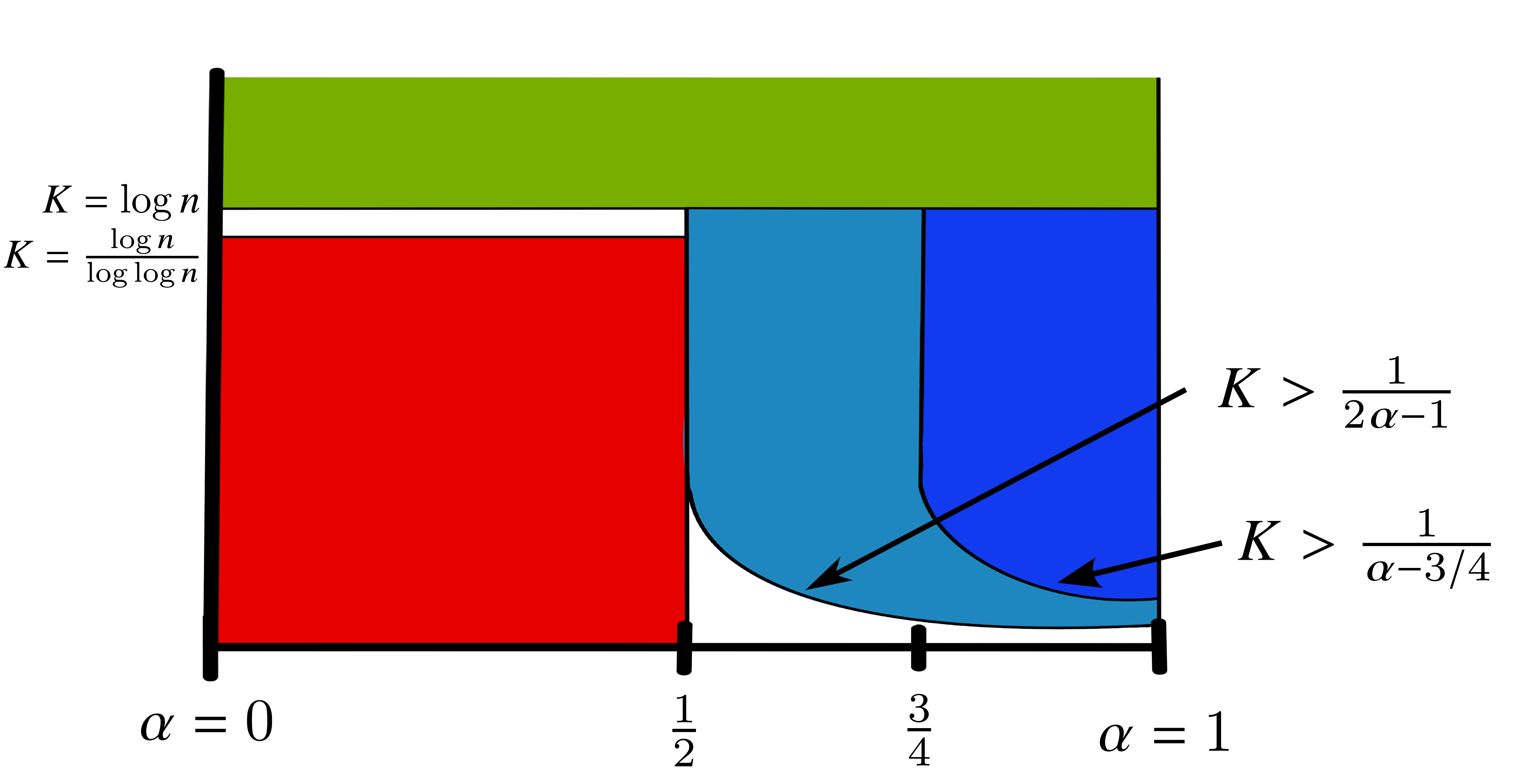}
  \caption{Phase diagram for the possibility and impossibility of estimating high-degree vertices in a graph $G \in \cG$.
  \emph{Dark blue region:} Achievability region for the algorithm of prior work (second derivative thresholding) \cite{mossel2024finding}, for which high-degree vertices can be estimated from more than $1/(\alpha - 3/4)$ cascade traces.
  \emph{Light blue region:} High-degree vertices can be estimated from more than $1/(2\alpha - 1)$ cascade traces (proven in Theorem \ref{thm:estimating_high_degrees}).
  \emph{Red region:} Estimating high-degree vertices is impossible, even when $G$ is known to be a tree \cite{mossel2024finding}.
  \emph{Green region:} Full recovery of $G$ is possible if $G$ is a tree, hence estimation of high-degree vertices is also possible \cite{mossel2024finding, ACFKP13_trace_complexity}.
  \emph{White region:} Regimes with small gaps between the bounds provided by our analysis. 
  }
  \label{fig:si_phase_transitions}
\end{figure}

Together, Theorems \ref{thm:estimating_high_degrees} and \ref{thm:high_degree_estimation_impossibility} show there is a sharp phase transition at $\alpha = 1/2$ in the sample complexity of estimating high-degree vertices. 
On one hand, when $\alpha > 1/2$, high-degree vertices can be estimated using a constant number of cascades (essentially the minimum amount of information possible). 
On the other hand, when $\alpha < 1/2$, at least $\log (n) / \log \log (n)$ cascades are needed for estimation, which nearly matches the sample complexity of learning the \emph{entire} graph, under the additional restriction that it is a tree \cite{ACFKP13_trace_complexity}.
To summarize, there is a sharp dichotomy between minimal and maximal data regimes for the task of learning high-degree vertices from cascade traces. 
A visualization of possibility and impossibility regimes can be found in Figure \ref{fig:si_phase_transitions}.

\section{Overview of algorithms and techniques}
\label{sec:techniques}

In this section, we discuss our methods and results in full detail. Our algorithm for change-point detection is described in Section \ref{subsec:algorithms}, and the proof techniques are discussed in Section \ref{subsec:general_result}.

\subsection{Algorithm for change-point detection}
\label{subsec:algorithms} 
Recall our discussion that the key to detecting change-points is estimating the counterfactual residual. It turns out that for $\ell \ge 2$, the order $\ell$ discrete derivative captures the counterfactual residual of a degree $\ell - 2$ polynomial of past data in a short window. 
To be precise, for an integer $\ell \ge 1$ and $\delta > 0$, the order $\ell$ derivative of $N(t)$ with discretization $\delta$ is given by
\[
\Delta_\delta^{(\ell)} N(t) : = \sum_{j = 0}^{\ell} (-1)^{\ell - j} \binom{\ell}{j} N(t + (j - \ell + 1) \delta),
\]
which is well defined for $t \ge (\ell - 1) \delta$. Our detection algorithm given in  Theorem~\ref{thm:simplified_derivatives} then simply returns times at which this discrete derivative exceeds a given threshold. More details can be found in Algorithm \ref{alg:derivative_thresholding}.
\smallskip 

To motivate this method, let us assume for simplicity that the rate function $\Lambda$ is right-continuous, piecewise constant, and bounded, i.e., there exists a constant $S > 0$ such that $| \Lambda(s) | \le S$ for all $s \ge 0$. 
We assume that all jump discontinuities are separated by at least some $\spacing > 0$.
For a given $t \ge 0$ and $\delta > 0$, let $\Lambda$ be continuous in $(t - \delta, t)$. 
Assuming there is no discontinuity in the interval $(t - \delta, t + \delta)$, the number of events in $(t - \delta, t)$ and $(t, t + \delta)$ have the same mean. Consequently, a natural estimate of the process in the interval $(t , t + \delta)$ is simply $\widehat{N}(t, t + \delta) : = N(t) - N(t - \delta)$. 
The \emph{counterfactual residual} between the observed number of events in $(t, t + \delta)$ and the estimated counterfactual is given by 
\[
N(t + \delta) - N(t) - \widehat{N}(t, t + \delta) = N(t + \delta) - 2 N(t) + N(t - \delta) =: \Delta_\delta^{(2)} N(t),
\]
where one may recognize that $\Delta_\delta^{(2)} N(t)$ is the \emph{second-order discrete derivative} of the point process.

To understand the behavior of the counterfactual residual in more detail, note that if $\Lambda$ is continuous in $(t - \delta, t) \cup (t, t + \delta)$ (possibly jumping at time $t$), then $N(t + \delta) - N(t) \sim \mathrm{Poi}(\delta \Lambda(t))$ and $N(t) - N(t - \delta) \sim \mathrm{Poi}(\delta \Lambda(t^-))$. Hence,
\begin{equation}
\label{eq:second_derivative_characterization}
\frac{1}{\delta} \Delta_\delta^{(2)} N(t) = \Lambda(t) - \Lambda(t^-) \pm O \left( \sqrt{\frac{S}{\delta}} \right),
\end{equation}
where the $O(\sqrt{S / \delta})$ fluctuations, which we call the \emph{concentration error}, appears since the variance of both Poisson random variables is at most $\delta S$. The concentration error can be understood as the ambient randomness of the point process, which is on the order of the standard deviation of $\Delta_\delta^{(2)} N(t) / \delta$.

An important consequence of  \eqref{eq:second_derivative_characterization} is that jumps in $\Lambda$ are detectable through this method only if the size of the jump, denoted by $\Lambda(t) - \Lambda(t^-)$, is larger than the standard deviation.
If $\delta$ is of constant order with respect to $S$ (such a parameter selection is natural if the minimal spacing between jumps, given by $\kappa$, is bounded away from 0 as a function of $S$), this condition reduces to 
\begin{equation}
\label{eq:jump_condition_1}
| \Lambda(t) - \Lambda(t^-) | \gg \sqrt{ S}.
\end{equation}
This condition is tight: if the jump size is asymptotically smaller than $\sqrt{S}$, then it is generally impossible to distinguish the jump from the ambient randomness of the point process (see Proposition \ref{prop:impossibility} and surrounding discussion in the text).

While the second derivative method is tailored towards piecewise constant rate functions, it can still identify jumps in more complex rate functions. As an example, let us assume that $\Lambda$ is a right-continuous, piecewise smooth function. By the definition of the point process, we have that
\begin{align*}
N(t) - N(t - \delta) & \sim \mathrm{Poi} \left( \int_{t - \delta}^t \Lambda(s) ds \right) \\
N(t + \delta) - N(t) & \sim \mathrm{Poi} \left( \int_{t}^{t + \delta} \Lambda(s) ds \right).
\end{align*}
Assume that there exists a constant $S > 0$ such that for all $s \ge 0$, $| \Lambda(s) | \le S$ and $| \Lambda'(s) | \le S$ wherever the derivative exists. Then the variances of $N(t) - N(t - \delta)$ and $N(t + \delta) - N(t)$ are both at most $\delta S$, hence
\begin{equation}
\label{eq:second_derivative_characterization_v2}
\frac{1}{\delta} \Delta_\delta^{(2)} N(t) = \frac{1}{\delta} \left( \int_{t}^{t + \delta} \Lambda(s) ds - \int_{t - \delta}^t \Lambda(s) ds \right) \pm O \left( \sqrt{\frac{S}{\delta}} \right),
\end{equation}
where, as before, the second term on the right hand side is the \emph{concentration error}.
To simplify the right hand side of \eqref{eq:second_derivative_characterization_v2}, suppose there are no discontinuities in the intervals $(t - \delta, t)$ and $(t, t + \delta)$. Then, since the absolute value of $\Lambda'$ is bounded by $S$ wherever it exists, it holds that $|\Lambda(s) - \Lambda(t^-) | \le \delta S$ for all $s \in (t - \delta, t)$ and $|\Lambda(s) - \Lambda(t) | \le \delta S$ for all $s \in (t, t + \delta)$. As a result, we have that
\begin{align}
\frac{1}{\delta} \Delta_\delta^{(2)}N(t) & =  \Lambda(t) - \Lambda(t^-) + \frac{1}{\delta} \left(\int_t^{t + \delta} ( \Lambda(s) - \Lambda(t)) ds - \int_{t - \delta}^t ( \Lambda(s) - \Lambda(t) ) ds\right) \pm O \left( \sqrt{\frac{S}{\delta}} \right) \nonumber \\
\label{eq:second_derivative_characterization_v3}
& =  \Lambda(t) - \Lambda(t^-) \pm O \left( \delta S + \sqrt{\frac{S}{\delta}} \right).
\end{align}
From  \eqref{eq:second_derivative_characterization_v3}, we see that jumps are detectable from the second-order discrete derivative if they are asymptotically larger than both $\delta S$ and $\sqrt{S / \delta}$. Optimizing over $\delta$ leads to the condition
\begin{equation}
\label{eq:jump_condition_2}
| \Lambda(t) - \Lambda(t^-) | \gg S^{2/3}.
\end{equation}
Note that this condition is much more strict than \eqref{eq:jump_condition_1}. This is primarily due to the additional error term $\delta S$, which captures the error incurred by approximating a smooth function by a constant function over an interval of size $\delta$. This approximation error increases with $\delta$, while the concentration error {decreases} with $\delta$. Balancing both errors leads to strictly worse performance than the case of piecewise constant rate functions. 

Fortunately, the gap between the conditions \eqref{eq:jump_condition_1} and \eqref{eq:jump_condition_2} can be reduced if we consider more complex counterfactual estimates of $N(t + \delta) - N(t)$. Previously, we estimated this quantity through a locally constant approximation, i.e., $\widehat{N}(t, t + \delta) = N(t) - N(t - \delta)$. A natural generalization is to use a locally \emph{linear} approximation; that is, based on the number of events in $(t - 2 \delta, t - \delta)$ and $(t - \delta, t)$, we can construct a linear estimate of $N(t + \delta) - N(t)$. 
This leads to the following predictor:
\begin{align}
\label{eq:locally_linear_estimator}
\widetilde{N}(t, t + \delta) & : = N(t) - N(t - \delta) + \left( (N(t) - N(t - \delta)) - (N(t - \delta) - N(t - 2 \delta)) \right) \\
& = 2 N(t) - 3 N(t - \delta) + N(t - 2 \delta). \nonumber
\end{align}
The expression in \eqref{eq:locally_linear_estimator} effectively adds a linear correction to the earlier estimate $\widehat{N}(t, t + \delta)$ by considering the changes in the number of events in the intervals $(t- 2 \delta, t - \delta)$ and $(t - \delta, t)$.
The difference between the true and predicted number of events in $(t, t + \delta)$ is given by
\begin{align*}
N(t + \delta) - N(t) - \widetilde{N}(t + \delta) = N(t + \delta) - 3 N(t) + 3 N(t - \delta) - N(t - 2 \delta) = : \Delta_\delta^{(3)} N(t),
\end{align*}
where one may recognize that $\Delta_\delta^{(3)} N(t)$ is the third-order discrete derivative of the point process. 
Let us now analyze the behavior of $\Delta_\delta^{(3)} N(t)$. Assume that $| \Lambda(s) |, | \Lambda'(s) |, | \Lambda''(s) | \le S$ for all $s \ge 0$, where the bounds on $\Lambda'$ and $\Lambda''$ hold whenever they are well-defined.
Through similar arguments used in the derivation of \eqref{eq:second_derivative_characterization_v2}, we have that 
\begin{equation}
\label{eq:third_derivative_characterization}
\frac{1}{\delta} \Delta_\delta^{(3)} N(t) = \frac{1}{\delta}\left( \int_t^{t + \delta} \Lambda(s) ds - 2 \int_{t - \delta}^t \Lambda(s) ds +  \int_{t - 2 \delta }^{t - \delta} \Lambda(s) ds \right) \pm O \left( \sqrt{ \frac{S}{\delta}} \right).
\end{equation}
If $\Lambda$ is smooth on $(t - 2 \delta, t) \cup (t, t + \delta)$, then by Taylor's theorem we have
\begin{align*}
\Lambda(s) & = \Lambda(t^-) + (s - t) \Lambda'(t^-) \pm O(\delta^2 S), \qquad s \in (t - 2 \delta, t) \\
\Lambda(s) & = \Lambda(t) + (s - t) \Lambda'(t) \pm O(\delta^2 S), \qquad s \in (t, t + \delta).
\end{align*}
Substituting these expressions into \eqref{eq:third_derivative_characterization}, we arrive at
\begin{equation}
\label{eq:third_derivative_characterization_v2}
\frac{1}{\delta} \Delta_{\delta}^{(3)} N(t) = \Lambda(t) - \Lambda(t^-) + \frac{\delta}{2} ( \Lambda'(t) - \Lambda'(t^-) ) \pm O \left( \delta^2 S + \sqrt{\frac{S}{\delta}} \right).
\end{equation}
Compared to \eqref{eq:second_derivative_characterization_v3}, the fluctuations above are smaller, with $\delta S$ replaced by $\delta^2 S$.
To clearly see the implications of \eqref{eq:third_derivative_characterization_v2} for change-point detection, let us assume that if an abrupt jump discontinuity occurs at $t$, the jump in the first derivative of $\Lambda'$ is not significantly larger than the jump in $\Lambda$ at time $t$. That is, 
\begin{equation}
\label{eq:jump_of_derivative}
\frac{ | \Lambda'(t)- \Lambda'(t^-)|}{ |\Lambda(t) - \Lambda(t^-) |} \le \rho, 
\end{equation}
where $\rho$ is of constant order with respect to $S$ and $A$. (For the applications we study, a version of this inequality does indeed hold.) It follows that 
\begin{equation}
\label{eq:third_derivative_characterization_v3}
\frac{1}{\delta} \Delta_{\delta}^{(3)} N(t) = \left( 1 \pm \frac{\delta \rho}{2} \right) ( \Lambda(t) - \Lambda(t^-) ) \pm O \left( \delta^2 S + \sqrt{\frac{S}{\delta}} \right).
\end{equation}
Optimizing over $\delta$ shows that a jump at time $t$ can be detected by the third-order discrete derivative if 
\begin{equation}
\label{eq:jump_condition_3}
| \Lambda(t) - \Lambda(t^-) | \gg S^{3/5},
\end{equation}
which is a significant improvement over \eqref{eq:jump_condition_2}.

Generalizing this idea further to higher order discrete derivatives, we have the following result. 
\begin{thm}[Simplified version of Theorem \ref{thm:derivative_thresholding}]
\label{thm:simplified_derivatives}
Let $\Lambda$ be a piecewise smooth function with discontinuity points given by a finite set $\cT \subset (0,T)$.
Let $T, \rho, \spacing > 0$ be fixed constants, and let $\theta \in (1/2, 1)$.
Let $\ell \ge 1$ be an integer satisfying $\frac{\ell + 1}{2 \ell + 1} < \theta < 1$.
Additionally, suppose that the parameters $A, S$ and the set of change-points $\cT$ satisfy
\begin{enumerate}
    \item (Smoothness.) For all $t \in [0,T] \setminus \cT$, it holds that $| \Lambda(t) |, | \Lambda^{(1)}(t)|, \ldots, | \Lambda^{(\ell)}(t) | \le S$.
    \item (Jump size.) For all $t \in \cT$, $| \Lambda(t) - \Lambda(t^-) | \ge A$.
    \item \label{item:jumps_of_derivatives}
    (Jumps of derivatives.) For all $t \in \cT$, it holds that
    \[
    \max_{1 \le k \le \ell} \frac{| \Lambda^{(k)}(t) - \Lambda^{(k)}(t^-) |}{ |\Lambda(t) - \Lambda(t^-) |} \le \rho.
    \]
    \item (Separation of change-points.) $\sep(\cT) \ge \spacing$.
\end{enumerate}
Suppose that as $S \to \infty$, it holds that $A \ge S^{\theta}$ and $\delta = S^{-1/(2 \ell + 1)}$ while $T, \spacing, \rho$ are fixed. 
Then with probability $1 - o(1)$, 
\begin{equation}
\label{eq:simplified_detection_statement}
\cT \subseteq \left \{ t \in [\ell \delta,T] : \left| \frac{1}{\delta} \derivative N(t) \right| \ge \frac{A}{2} \right \} \subseteq \left \{ t \in [0,T] : \exists t' \in \cT \text{ with } |t - t' | \le (\ell + 1) \delta  \right \}.
\end{equation}
\end{thm}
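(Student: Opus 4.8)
The plan is to pass from the random quantity $\tfrac{1}{\delta}\derivative N(t)$ to its deterministic mean $\tfrac{1}{\delta}\derivative M(t)$, where $M(t):=\int_0^{t}\Lambda(s)\,ds$ is the compensator, and then to evaluate the latter explicitly using that the $(\ell+1)$-st order finite difference $\derivative$ annihilates every polynomial of degree at most $\ell$. Two standing remarks: condition (1) forces $|\Lambda|\le S$ on all of $[0,T]$ (by right-continuity and smoothness off $\cT$), so $\int_0^{T}\Lambda\le ST$; and $\delta=S^{-1/(2\ell+1)}\to 0$, so for $S$ large the window $[t-\ell\delta,t+\delta]$ has width $(\ell+1)\delta<\spacing$ — hence contains at most one point of $\cT$ — and $\cT\subset[\ell\delta,T]$ (using that $\cT\subset(0,T)$ is fixed).

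\emph{Ingredient A: uniform approximation by the compensator (this is Theorem~\ref{thm:approximation_error}).} I would write $\derivative N(t)$ as a linear combination, with coefficients bounded by $2^{\ell+1}$, of the $\ell+1$ consecutive length-$\delta$ increments of $N$ inside $[t-\ell\delta,t+\delta]$. These are independent and Poisson with mean at most $\delta S$ by condition (1), so $\E[\derivative N(t)]=\derivative M(t)$, and a Bernstein bound gives $|\tfrac1\delta\derivative N(t)-\tfrac1\delta\derivative M(t)|\lesssim\sqrt{S/\delta}\cdot\sqrt{\log(1/\eta)}$ for a fixed $t$, up to failure probability $\eta$. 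Uniformizing over $t\in[\ell\delta,T]$ — a union bound over the $\mathrm{poly}(S)$ breakpoints of the piecewise-constant process $t\mapsto\derivative N(t)$ (there are $O(ST)$ events, with high probability) together with a deterministic fine grid, which costs only an $S^{o(1)}$ inflation of the threshold — yields an event $\cE$ with $\p(\cE)=1-o(1)$ on which
\[
\sup_{t\in[\ell\delta,T]}\Big|\tfrac{1}{\delta}\derivative N(t)-\tfrac{1}{\delta}\derivative M(t)\Big|\;\le\;S^{(\ell+1)/(2\ell+1)+o(1)},
\]
where I used $\sqrt{S/\delta}=S^{(\ell+1)/(2\ell+1)}$. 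Everything below is on $\cE$.

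\emph{Ingredient B: evaluating $\tfrac1\delta\derivative M$ (this is Lemma~\ref{lemma:derivative_characterization}), and conclusion.} If the window $[t-\ell\delta,t+\delta]$ contains no point of $\cT$, then $M$ is $C^{\ell+1}$ on it with $|M^{(\ell+1)}|=|\Lambda^{(\ell)}|\le S$, so $M$ coincides with its degree-$\ell$ Taylor polynomial at $t$ up to $O(\delta^{\ell+1}S)$ on the window; as $\derivative$ kills that polynomial, $|\tfrac1\delta\derivative M(t)|=O(\delta^{\ell}S)=O(S^{(\ell+1)/(2\ell+1)})$. Hence on $\cE$, any $t$ with a clean window satisfies $|\tfrac1\delta\derivative N(t)|\le S^{(\ell+1)/(2\ell+1)+o(1)}$, which — using $\theta>\tfrac{\ell+1}{2\ell+1}$ and $A\ge S^{\theta}$ — is $o(A)$, hence $<A/2$ for $S$ large; by contraposition this is the second inclusion in \eqref{eq:simplified_detection_statement}. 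For the first inclusion, fix $\tau\in\cT$; then $\tau\in[\ell\delta,T]$ and $\tau$ is the unique point of $\cT$ in its window. Taylor-expanding $\Lambda$ to degree $\ell-1$ on $[\tau-\ell\delta,\tau)$ and on $[\tau,\tau+\delta]$ separately and integrating, one finds that $M$ agrees on the window, up to $O(\delta^{\ell+1}S)$, with $R(s)+\sum_{k=1}^{\ell}c_k\,(s-\tau)_{+}^{k}$ for some polynomial $R$ of degree $\le\ell$, where $c_1=\Lambda(\tau)-\Lambda(\tau^-)$ and $|c_k|\le\rho\,|\Lambda(\tau)-\Lambda(\tau^-)|/k!$ for $k\ge 2$ by condition (3). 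Since $\derivative R(\tau)=0$ and $\derivative[(s-\tau)_{+}^{k}](\tau)=\delta^k$ (only the node $s=\tau+\delta$ contributes at the kink), this gives $\tfrac1\delta\derivative M(\tau)=(\Lambda(\tau)-\Lambda(\tau^-))\bigl(1+O(\rho\delta)\bigr)+O(\delta^{\ell}S)$. On $\cE$, using $\rho\delta=o(1)$ and that $\delta^{\ell}S$ and $S^{(\ell+1)/(2\ell+1)+o(1)}$ are both $o(A)$, we get $|\tfrac1\delta\derivative N(\tau)|\ge(1-o(1))\,|\Lambda(\tau)-\Lambda(\tau^-)|-o(A)\ge(1-o(1))A>A/2$ for $S$ large, using $|\Lambda(\tau)-\Lambda(\tau^-)|\ge A$. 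This is the first inclusion.

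\emph{Where the difficulty lies.} The delicate step is Ingredient A: pointwise concentration of the finite difference is immediate, but upgrading it to a bound \emph{uniform} over the continuum of evaluation points $t$, while keeping the error at the critical scale $\sqrt{S/\delta}$ up to $S^{o(1)}$ factors, needs care. This is also the step that pins down the parameters: $\delta=S^{-1/(2\ell+1)}$ is exactly the value that balances the Taylor error $\delta^{\ell}S$ against the fluctuation error $\sqrt{S/\delta}$, at the common scale $S^{(\ell+1)/(2\ell+1)}$, which is in turn dominated by $A\ge S^{\theta}$ precisely when $\theta>\tfrac{\ell+1}{2\ell+1}$ — matching the constraint imposed on $\ell$. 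The finite-difference calculus of Ingredient B is deterministic and routine; in the full Theorem~\ref{thm:derivative_thresholding} the extra effort goes into allowing $\Lambda$ to be a random \cadlag process and permitting additional small jumps, which the same scheme handles with more bookkeeping.
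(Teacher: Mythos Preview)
Your proposal is correct and matches the paper's two-ingredient structure: a concentration step (the paper's Theorem~\ref{thm:approximation_error}) followed by a deterministic finite-difference computation (the paper's Lemma~\ref{lemma:derivative_characterization}), combined exactly as in the proof of Theorem~\ref{thm:derivative_thresholding}. The only real difference is one of packaging. Because $\Lambda$ is deterministic here, you can work directly with the compensator $M(t)=\int_0^t\Lambda$ and concentrate $\tfrac1\delta\derivative N$ around $\tfrac1\delta\derivative M$; the paper instead builds the $\cF_t$-measurable polynomial $\overline{N}_\ell(\cdot,t)$ with coefficients $\lambda^{(k)}(t)$ and controls $N(s)-\overline{N}_\ell(s,t)$ via the event $\Eapprox$. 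In the deterministic setting these coincide, since $\lambda^{(k)}(t)=\Lambda^{(k-1)}(t)=M^{(k)}(t)$ and $\overline{N}_\ell(s,t)-N(t)$ is precisely the degree-$\ell$ Taylor polynomial of $M$ at $t$ (minus $M(t)$). Your route is cleaner for Theorem~\ref{thm:simplified_derivatives}; the paper's abstraction is what lets the same argument extend to random, history-dependent rate functions in Theorem~\ref{thm:derivative_thresholding}. Your identification of the uniform-in-$t$ concentration as the delicate step, and of $\delta=S^{-1/(2\ell+1)}$ as the value balancing the Taylor error $\delta^\ell S$ against the fluctuation error $\sqrt{S/\delta}$, is exactly right.
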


Theorem \ref{thm:simplified_derivatives} is an \emph{algorithmic} version of Theorem \ref{thm:main_simplified} that highlights how thresholding higher-order derivatives can identify change-points. The first inclusion in \eqref{eq:simplified_detection_statement} states that this thresholding procedure produces no false negatives; all abrupt change-points lead to a large absolute value of the order $\ell + 1$ derivative.
The second inclusion in \eqref{eq:simplified_detection_statement} states that the thresholding procedure produces no false positives, in the sense that any time indices that pass the threshold must be sufficiently close in proximity to an actual change-point. 
We make a few additional remarks about the theorem. 
\begin{itemize}
    \item The number of derivatives needed, captured by $\ell + 1$, depends on how close $\theta$ is to $1/2$. If $\theta = 1/2 + \epsilon$ then $\ell = \Omega ( 1/ \epsilon)$ is required for the theorem to apply.
    \item Condition \ref{item:jumps_of_derivatives}, which concerns the jumps of derivatives of $\Lambda$ at change-points, is a generalization of \eqref{eq:jump_of_derivative} when higher-order derivatives are used. At a high level, it ensures that abrupt changes are not too transient to be detectable. 
    \item Our general result, Theorem \ref{thm:derivative_thresholding}, holds for a much broader class of deterministic and possibly random rate functions, though the conditions are more complicated to state. We discuss the details in Section \ref{subsec:general_result}.
\end{itemize}

We also remark that the property \eqref{eq:simplified_detection_statement} readily yields an estimator for change-points that satisfies the algorithmic guarantees outlined in Theorems \ref{thm:main_simplified}, \ref{thm:smooth-plus-jump} and \ref{thm:detecting_high_degree}. This is formalized in the following result.

\begin{prop}
\label{prop:algorithm}
Let $\delta > 0$, and suppose that $\sep(\cT) \ge 4 (\ell + 1) \delta$. Additionally, let 
\[
\widehat{\cT} \subset \left \{ t \in [0,T] : \left| \frac{1}{\delta}  \derivative N(t) \right| \ge \frac{A}{2} \right \}
\]
be a subset of maximal size satisfying $\sep(\widehat{\cT}) > 2 (\ell + 1) \delta$, i.e., $\widehat{\cT}$ is a maximal $2(\ell + 1)\delta$-packing. If \eqref{eq:simplified_detection_statement} holds, then $| \cT | = | \widehat{\cT}|$ and $\cpdist(\cT, \widehat{\cT}) \le 2 (\ell + 1) \delta$.
\end{prop}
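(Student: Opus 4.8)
The plan is to convert \eqref{eq:simplified_detection_statement} into a statement about the combinatorial structure of the threshold set and then read off the conclusion by a packing argument. Write $G := \left\{ t \in [0,T] : \left| \frac{1}{\delta}\derivative N(t) \right| \ge \frac{A}{2} \right\}$ for the set of times that pass the threshold; since $\derivative N(t)$ is only defined for $t \ge \ell\delta$, the set $G$ is exactly the middle set appearing in \eqref{eq:simplified_detection_statement}, so $\cT \subseteq G$ and every $t \in G$ lies within $(\ell+1)\delta$ of some element of $\cT$. For each $t^* \in \cT$ introduce the cluster $C_{t^*} := \{ t \in G : |t - t^*| \le (\ell+1)\delta \}$. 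The first step is to collect three elementary facts: (i) $G = \bigcup_{t^* \in \cT} C_{t^*}$ (from the second inclusion in \eqref{eq:simplified_detection_statement}); (ii) $t^* \in C_{t^*}$, so each cluster is nonempty (from the first inclusion); and (iii) using $\sep(\cT) \ge 4(\ell+1)\delta$ and the fact that each cluster lies in a ball of radius $(\ell+1)\delta$ about its center, distinct clusters are disjoint and at distance at least $2(\ell+1)\delta$ from one another, and moreover if $t^* < t'$ are two elements of $\cT$ then every point of $C_{t^*}$ is strictly smaller than every point of $C_{t'}$.

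The second step is to show that $\widehat\cT$ — a subset of $G$ of maximum size with $\sep(\widehat\cT) > 2(\ell+1)\delta$ — picks out exactly one point from each cluster. Since any cluster has diameter at most $2(\ell+1)\delta$, it cannot contain two distinct points of $\widehat\cT$ without violating the strict separation $\sep(\widehat\cT) > 2(\ell+1)\delta$; this gives ``at most one''. For ``at least one'', suppose $C_{t^*} \cap \widehat\cT = \emptyset$ for some $t^* \in \cT$. Then $\widehat\cT \cup \{t^*\}$ would still be a valid packing: $t^* \in \cT \subseteq G$, and any $s \in \widehat\cT$ lies in some cluster $C_{t'}$ with $t' \ne t^*$ by disjointness, so $|t^* - s| \ge \sep(\cT) - (\ell+1)\delta \ge 3(\ell+1)\delta > 2(\ell+1)\delta$. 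This contradicts $\widehat\cT$ having maximum size. Combined with fact (i) — which forces every element of $\widehat\cT$ to lie in some cluster — we conclude that $\widehat\cT$ decomposes as a disjoint union of $|\cT|$ singletons, hence $|\widehat\cT| = |\cT|$.

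The final step is the ordering argument. List $\cT = \{ t_1 < \cdots < t_m \}$ and $\widehat\cT = \{ \hat t_1 < \cdots < \hat t_m \}$. By fact (iii), the clusters $C_{t_1}, \ldots, C_{t_m}$ appear along the real line as disjoint blocks in this order, and $\widehat\cT$ has exactly one point in each block; therefore the $i$-th smallest element of $\widehat\cT$ must be the one lying in $C_{t_i}$, i.e. $\hat t_i \in C_{t_i}$. Then $|\hat t_i - t_i| \le (\ell+1)\delta \le 2(\ell+1)\delta$ for every $i$, which is exactly $\cpdist(\cT, \widehat\cT) \le 2(\ell+1)\delta$.

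I do not anticipate a genuine obstacle here — the argument is bookkeeping — but the one place requiring care is the interplay between the \emph{strict} packing inequality $\sep(\widehat\cT) > 2(\ell+1)\delta$ and the \emph{non-strict} metric bounds coming from the cluster radii (within-cluster distances $\le 2(\ell+1)\delta$, between-cluster distances $\ge 2(\ell+1)\delta$). It is precisely to create the needed slack that the hypothesis asks for $\sep(\cT) \ge 4(\ell+1)\delta$ rather than the weaker $3(\ell+1)\delta$: this extra room is what makes the ``at least one'' step and the linear ordering of clusters work with strict inequalities.
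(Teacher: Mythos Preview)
Your argument is correct and follows essentially the same route as the paper's proof: both use the two inclusions in \eqref{eq:simplified_detection_statement} together with the separation hypothesis to show that a maximal $2(\ell+1)\delta$-packing of the threshold set selects exactly one point near each element of $\cT$, via the same ``at most one by diameter, at least one by maximality'' dichotomy. Your cluster decomposition and explicit ordering step make the bijection and the bound on $\cpdist$ more transparent than the paper's terser version, but the underlying ideas are identical.
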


\begin{proof}
It suffices to show that for any $t \in \cT$, there exists a unique $t' \in \widehat{\cT}$ with $|t - t'| \le 2 (\ell + 1)\delta$. We prove by way of contradiction that this must be the case.

If such a $t'$ does \emph{not} exist, then we could add $t$ to $\widehat{\cT}$ since $\left| \frac{1}{\delta} \derivative N(t) \right| \ge \frac{A}{2}$ in light of the first inclusion in \eqref{eq:simplified_detection_statement}, and since $|s - t| > 2(\ell + 1) \delta$ for all $s \in \widehat{\cT}$.
However, adding $t$ to $\widehat{\cT}$ would contradict the maximality of $\widehat{\cT}$.

Suppose now that there exists $t, t'' \in \widehat{\cT}$ such that $|t - t'|, |t - t''| \le 2 (\ell + 1) \delta$. Since $\sep(\cT) \ge 4(\ell + 1) \delta$, the second inclusion of \eqref{eq:simplified_detection_statement} implies that $|t - t'|, |t - t''| \le (\ell + 1)\delta$, which in turn implies that $|t' - t''| \le 2(\ell + 1) \delta$. However, this contradicts the property that $\sep(\widehat{\cT}) > 2 (\ell + 1) \delta$.
\end{proof}

For clarity, we have outlined the full change-point detection procedure in Algorithm \ref{alg:derivative_thresholding}.
As stated, Algorithm \ref{alg:derivative_thresholding} requires one to compute the discrete derivative for each $t \in [0,T]$, which is an infinite set. In practice, it is sufficient to first discretize the set of time indices, and to compute the discrete derivative over just the discretized set.

\smallskip 
\begin{breakablealgorithm}
\caption{Finding abrupt changes via derivative thresholding}
\label{alg:derivative_thresholding}
\begin{algorithmic}[1]
\Require{Point process $N(t)$, time horizon $T\ge 0$, integer $\ell \ge 0$, discretization $\delta > 0$, threshold $A > 0$}
\Ensure{A finite set $\widehat{\cT} \subset [0,T]$}
\State For each $t \in [0,T]$, compute $\derivative N(t)$.
\State Output a $2 (\ell + 1) \delta$-packing of $ \left \{ t \in [0,T] : \left| \frac{1}{\delta} \derivative N(t) \right| \ge A / 2 \right \}$.
\end{algorithmic}
\end{breakablealgorithm}
\smallskip 

\begin{rem}[Detecting a single change-point]
If one has a priori knowledge of only a single change-point, Step 2 of Algorithm \ref{alg:derivative_thresholding} is equivalent to outputting $\argmax_{t \in [0,T]} \left| \frac{1}{\delta} \derivative N(t) \right|$.
\end{rem}

\subsection{General results}
\label{subsec:general_result}
In this section, we dive into the details of our general results. At a high level, our strategy is as follows:

\begin{enumerate}
    \item (Polynomial approximation) We first show that, if no abrupt changes occur in the interval $(t, s]$, then we can approximate $N(s)$ by $\overline{N}_\ell(s, t)$, where $\overline{N}_\ell(\cdot, t)$ is a degree $\ell$ polynomial with $\cF_t$-measurable coefficients. The approximation error $N(s) - \overline{N}_\ell(s, t)$ depends on $|s - t|$ as well as the ``smoothness'' of the point process in regions away from abrupt changes, much like a Taylor approximation.

    \item (Analysis of higher-order derivatives) We next show that 
    \begin{equation}
    \label{eq:prediction_error_informal}
    \derivative N(t) = N\left( t + \delta \right) - \overline{N}_\ell \left( t + \delta, t^- \right) \pm \delta \err,
    \end{equation}
    where $\err$ captures the error of the polynomial approximation. We can interpret $\overline{N}_\ell(t + \delta , t^-)$ as a degree $\ell$ estimate of \emph{counterfactual trajectory} of the process (i.e., how the process would evolve if there were no change-point at time $t$).\footnote{Formally, $\overline{N}_\ell(\cdot, t^-) = \lim_{s \to t^-} \overline{N}_\ell(\cdot, s)$, which is well-defined since we assume the polynomial coefficients are \cadlag (see Assumption \ref{as:lambda_k}).} 
    With this interpretation, the order $\ell + 1$ derivative captures the counterfactual residual based on the degree $\ell$ approximation.
    We show formally that if $t$ is not close to an abrupt change, the counterfactual trajectory correctly captures the behavior of the observed process at time $t + \delta$ (up to random fluctuations that are on the order of the process variance).
    If an abrupt change occurs at time $t$, there is a significant difference between the observed behavior and the counterfactual trajectory at time $t + \delta$.

    \item (Conditions for detection) The arguments above indicate that the value of the order $(\ell + 1)$ derivative will be large in the presence of an abrupt change, provided the counterfactual residual overcomes the ``baseline'' error of the polynomial approximation. By choosing a value of $\ell$ appropriately, we show this occurs precisely when the magnitude of the abrupt change is larger than the square root of the ``smoothness" of the point process (appropriately defined).
\end{enumerate}

\subsubsection{Polynomial approximation of the point process}

Let $s, t \ge 0$. Define the \emph{order $\ell$ local polynomial approximation} of $N(s)$ around $t$ to be 
\begin{equation}
\label{eq:poly_approx}
\overline{N}_\ell(s,t) : = N(t) + \sum_{k = 1}^\ell \frac{ \lambda^{(k)}(t)}{k!} (s - t)^k,
\end{equation}
where the polynomial coefficients $\lambda^{(1)}(t), \ldots, \lambda^{(\ell)}(t)$ are $\cF_t$-measurable random variables. 
At a high level, these coefficients capture the local characteristics of the point process in ``nice'' regions (i.e., away from the abrupt changes in $\cT$). 
Our goal is to choose the $\lambda^{(k)}$'s so that the following event holds with high probability. 

\begin{defn}[Polynomial approximation]
\label{def:Eapprox}
Let $T, \err, \delta > 0$ and let $\ell \ge 1$ be an integer. We say that the event $\Eapprox(T, \err, \delta, \ell)$ holds if and only if the following conditions are satisfied for all $t \in [0,T]$:
\begin{enumerate}
    \item \label{item:forward_prediction}
    (Forward predictions) If $\cT \cap (t, t + \delta) = \emptyset$, then it holds for all $s \in (t,t + \delta)$ that 
    \begin{equation}
    \label{eq:event_forward_prediction}
    | N(s) - \overline{N}_\ell(s, t) | \le \delta \err.
    \end{equation}
    \item \label{item:backward_prediction}
    (Backward predictions) If $\cT \cap (t - \delta, t) = \emptyset$, then it holds for all $s \in (t - \delta, t)$ that 
    \begin{equation}
    \label{eq:event_backward_prediction}
    | N(s)  - \overline{N}_\ell(s, t^-) | \le \delta \err.
    \end{equation}
\end{enumerate}
As a shorthand, we will often write $\Eapprox$ instead of $\Eapprox(T, \err, \delta, \ell)$.
\end{defn}

Observing that $\overline{N}_\ell(s, t)$ resembles a Taylor approximation, a natural choice is to let $\lambda^{(k)}(t)$ be the $k$th order conditional derivative of $N(t)$. As an example, when $k = 1$, the conditional first derivative of $N(t)$ is simply the rate function of the point process, given by 
\[
\Lambda (t) = \lim_{\epsilon \to 0} \frac{ \E [ N(t + \epsilon) - N(t) \vert \cF_t]}{\epsilon}.
\]
However, notice that the computation of $\Lambda(t)$ may involve contributions from sample paths where an abrupt change occurs, which could cause the approximation error $N(s) - \overline{N}_\ell(s,t)$ to be quite large. 
To avoid such sample paths, we introduce the \emph{modified} conditional first derivative, given by 
\begin{equation}\label{eq:def-lambda-1}
\lambda^{(1)}(t) : = \lim_{\epsilon \to 0} \frac{1}{\epsilon} \E \left[ \left. \int_t^{t + \epsilon} \mathbf{1}(x \notin \cT) d N(x) \right \vert \cF_t \right ],
\end{equation}
which can be viewed as a version of $\Lambda(t)$ with the contributions of sample paths leading to $\cT$ removed.
More generally, for $k \ge 1$, we can inductively define the higher-order modified conditional derivative
\[
\lambda^{(k+1)}(t) : = \lim_{\epsilon \to 0} \frac{1}{\epsilon} \E \left[\left. \int_t^{t + \epsilon} \mathbf{1}(x \notin \cT) d \lambda^{(k)}(x)  \right \vert \cF_t \right].
\]
For notational consistency, we also denote $\lambda^{(0)}(t) : = N(t)$. 
We assume the following regularity conditions related to $\Lambda(t)$ and $\lambda^{(k)}(t), k \ge 0$.

\begin{assumption}[Smoothness and jumps]
\label{as:regularity}
Let $R > 0$ and let $\mathbf{S} := \{ S_k \}_{k \ge 1}, \mathbf{J} : = \{ J_k \}_{k \ge 0}$ be sequences of positive real numbers. 
Assume that it holds almost surely that $\sup_{t \in [0,T]} \Lambda(t) \le R$.
Additionally, assume that the following statements hold almost surely:
\begin{enumerate}
    \item $\sup_{t \in [0,T]} | \lambda^{(k)}(t) | \le S_k$ for all integers $k \ge 1$;
    \item $\sup_{t \in [0,T] \setminus \cT} | \lambda^{(k)}(t) - \lambda^{(k)}(t^-) | \le J_k$ for all integers $k \ge 0$.
\end{enumerate}
We will often use the shorthand $S_{\le \ell} := \max_{1 \le k \le \ell} S_k$ and $J_{\le \ell} : = \max_{0 \le k \le \ell} J_k$.
\end{assumption}
At a very high level, this notion of regularity allows us to bound how quickly the rate function of the point process changes in local areas around $t$.
The parameter $R$ controls the \emph{maximum rate} of events throughout the process, $S_k$ controls the \emph{smoothness} of the $k$th order conditional expected derivative of the point process, and $J_k$ controls the size of the \emph{jumps} in the $k$th order conditional expected derivative, apart from time indices in $\cT$.
We will additionally need to assume some properties about the $\lambda^{(k)}(t)$'s for $t \in \cT$ (see Definition \ref{def:Eabrupt}), though these are not relevant for our results on polynomial approximation.

In the applications we will consider, the $\lambda^{(k)}$'s are \cadlag with discontinuities that are either triggered by events in the point process, or are caused by external factors. 
To be more precise, we assume the following about the form of the $\lambda^{(k)}$'s. 

\begin{assumption}
\label{as:lambda_k}
Let $\mathbf{L} : = \{ L_k \}_{k \ge 0}$ be a sequence of positive real numbers and let $\cD \subset \R_+$ be a finite set.
Assume that for each $k \ge 0$, there exist $\cF$-adapted processes $\alpha^{(k)}, \beta^{(k)}, \gamma^{(k)}$ satisfying
\begin{equation}
\label{eq:lambda_k_decomposition}
\lambda^{(k)}(t) = \int_0^t \alpha^{(k)}(x) dx + \int_0^t \beta^{(k)}(x) dN(x) + \sum_{x \in \cD \cap [0,t]} \gamma^{(k)}(x),
\end{equation}
where the process $\alpha^{(k)}$ satisfies $\sup_{x \in [0,T]} |\alpha^{(k)}(x) | \le L_k$. We again often use the shorthand $L_{\leq \ell} := \max_{0 \leq k \leq \ell} L_k$. 
\end{assumption}

In essence, Assumption \ref{as:lambda_k} states that $\lambda^{(k)}$ can be decomposed into a sum of continuous and jump parts (which is in general possible for functions of bounded variation). The first term on the right hand side of \eqref{eq:lambda_k_decomposition} captures the continuous part; the assumption that $\sup_{x \in [0,T]} |\alpha^{(k)}(x)| \le L_k$ ensures that this part is $L_k$-Lipschitz. The second term on the right hand side captures the impact of jumps due to events in the point process. The third term on the right hand side captures the impact of jumps due to external factors (described by the deterministic set $\cD$). We remark that in the applications we will consider, the decomposition \eqref{eq:lambda_k_decomposition} is quite natural and easy to verify.

We are now ready to state our main result concerning polynomial approximation guarantees. The proof can be found in Section \ref{sec:polynomial_approximation}.

\begin{thm}
\label{thm:approximation_error}
Let $T, \delta > 0$ and let $\ell \ge 1$ be an integer. Define
\begin{equation}
\label{eq:error_bound}
\err : =  c_1 \left[ \delta^{\ell}S_{\ell + 1} + \sqrt{\frac{J_{\le \ell}^2 S_{1}}{\delta} \log \left( \frac{T^2 R^2 ( L_{\le \ell} + S_{\le \ell})^3 }{   \sep( \cD) \land 1} \right)} \right],
\end{equation}
where $c_1 : = c_1(\ell)$ is a positive constant.
Furthermore, assume that $\delta$ satisfies
\begin{equation}
\label{eq:delta_lower_bound_generic}
\delta \ge \frac{4 \ell}{S_{\le \ell + 1}} \log \left( \frac{T^2 R^2 ( L_{\le \ell} + S_{\le \ell})^3 }{   \sep( \cD) \land 1} \right).
\end{equation}
Then $\p ( \Eapprox) \ge 1 - 2 / S_{\le \ell}$.
\end{thm}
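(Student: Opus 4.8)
The plan is to establish a Taylor-type expansion of the point process with a quantitative remainder, split that remainder into a deterministic bias term and a martingale fluctuation term, and then upgrade a pointwise estimate to the uniform event $\Eapprox$ by discretizing $[0,T]$. For the expansion, fix $t$ with $\cT \cap (t, t+\delta) = \emptyset$; on this window the $\cT$-modified count agrees with $N$, so by the definition \eqref{eq:def-lambda-1} of $\lambda^{(1)}$ we may write $N(s) = N(t) + \int_t^s \lambda^{(1)}(u)\, du + M^{(0)}_t(s)$ for $s \in (t, t+\delta)$, where $M^{(0)}_t(s) := N(s) - N(t) - \int_t^s \lambda^{(1)}(u)\,du$ is a martingale in $s$. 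Applying the analogous decomposition to each coefficient, $\lambda^{(k)}(u) = \lambda^{(k)}(t) + \int_t^u \lambda^{(k+1)}(v)\,dv + M^{(k)}_t(u)$ for $k = 1, \ldots, \ell$ --- where Assumption \ref{as:lambda_k} supplies the semimartingale structure and the inductive definition identifies $\lambda^{(k+1)}$ as the drift --- and substituting repeatedly, the telescoping drifts reassemble exactly the polynomial $\overline{N}_\ell(s,t)$, so that
\begin{equation*}
N(s) - \overline{N}_\ell(s,t) = B_t(s) + F_t(s), \qquad B_t(s) := \int_t^s\!\!\int_t^{u_1}\!\!\cdots\!\!\int_t^{u_\ell} \lambda^{(\ell+1)}(u_{\ell+1})\, du_{\ell+1}\cdots du_1,
\end{equation*}
with $F_t(s) := \sum_{k=0}^\ell \int_t^s\!\int_t^{u_1}\!\cdots\!\int_t^{u_{k-1}} M^{(k)}_t(u_k)\, du_k\cdots du_1$ (the $k=0$ summand being $M^{(0)}_t(s)$). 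The backward claim follows from the same computation anchored at $t^-$ over $(t-\delta, t)$.

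For the bias, Assumption \ref{as:regularity}(1) gives $\sup_{[0,T]}|\lambda^{(\ell+1)}| \le S_{\ell+1}$, so integrating $\ell+1$ times over subintervals of length at most $\delta$ yields $|B_t(s)| \le \delta^{\ell+1} S_{\ell+1}/(\ell+1)!$, matching (after dividing by $\delta$) the first term of \eqref{eq:error_bound} up to the constant $c_1$. For the fluctuation, each $M^{(k)}_t$ jumps only at point-process events, since the deterministic $\cD$-jumps of $\lambda^{(k)}$ are predictable and are folded into the drift; hence by Assumption \ref{as:regularity}(2) its jumps are bounded by $J_k$ (with $J_0 = 1$), and its predictable quadratic variation over $(t,t+\delta)$ is at most $J_k^2 \int_t^{t+\delta} \lambda^{(1)}(u)\,du \le J_k^2 \delta S_1$. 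Freedman's maximal inequality for martingales with bounded jumps then gives $\sup_{u \in (t,t+\delta)} |M^{(k)}_t(u)| \lesssim \sqrt{J_k^2 \delta S_1 \log(1/p)} + J_k \log(1/p)$ with probability at least $1-p$, the implied constant depending only on $\ell$. Bounding the $k$-fold iterated integral by $(\delta^k/k!)\sup_{(t,t+\delta)}|M^{(k)}_t|$ and summing over $k \le \ell$ shows that $\delta^{-1}|F_t(s)| \lesssim \sqrt{(J_{\le\ell}^2 S_1/\delta)\log(1/p)}$ on the intersection of these events; the lower bound \eqref{eq:delta_lower_bound_generic} on $\delta$ is precisely what makes the linear-in-$\log(1/p)$ corrections and the $\delta^{k-1}$ prefactors for $k \ge 1$ subdominant to this term.

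To pass from this pointwise estimate to $\Eapprox$, one applies the bound on a grid of mesh $\eta$ in $[0,T]$, union bounding over the $O(T/\eta)$ anchors, the $\ell+1$ levels, and the forward/backward cases; between grid points the relevant processes move by a controlled amount (the drifts are Lipschitz with constant $O(L_{\le\ell}+S_{\le\ell})$, there are at most $1+\eta/\sep(\cD)$ predictable jumps, and with probability $1-p'$ at most $O(\eta R + \log(1/p'))$ point-process events). Taking $\eta$ polynomially small in $T, R, L_{\le\ell}, S_{\le\ell}$ and $(\sep(\cD)\wedge 1)^{-1}$ and the per-event failure probability proportional to $1/(S_{\le\ell}\cdot\#\text{events})$ makes the total failure probability at most $2/S_{\le\ell}$ while converting $\log(1/p)$ into the logarithmic factor in \eqref{eq:error_bound}, with all $\ell$-dependent constants absorbed into $c_1(\ell)$. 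The step I expect to be most delicate is the rigorous justification of the decomposition in the first paragraph: one must make precise that, on a random window avoiding the possibly random set $\cT$, each $M^{(k)}_t$ is genuinely a martingale obeying the stated jump and quadratic-variation bounds (cleanest via a $\cT$-stopped version of the process together with optional stopping), and one must check that the $\cD$-jumps of the higher coefficients contribute only lower-order terms under the constraint on $\delta$. Given the decomposition, the bias estimate and the Freedman step are routine.
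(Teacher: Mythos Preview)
Your proposal is correct and follows essentially the same route as the paper. The paper organizes the Taylor step as a deterministic inequality $|N(s)-\overline N_\ell(s,t)|\le \delta^{\ell+1}S_{\ell+1}+\sum_{k=0}^\ell \delta^k\sup_{r\in(t,s]}|X_k(r)|$ proved by induction on $k$, with $X_k(r)=\lambda^{(k)}(r)-\lambda^{(k)}(t)-\int_t^r\lambda^{(k+1)}$, whereas you write out the same object as an explicit iterated integral; both then apply Freedman to each level (the paper does this by passing to the globally defined martingale $Z_k(r)=\int_t^r\mathbf 1(x\notin\cT)\,d\lambda^{(k)}(x)-\int_t^r\lambda^{(k+1)}$, which coincides with $X_k$ on $\{\cT\cap(t,s]=\emptyset\}$ and is exactly the ``$\cT$-stopped version plus optional stopping'' device you anticipated), and both finish with a grid discretization and a union bound over $O(T/\eta)$ anchors with $\eta$ polynomial in the parameters appearing inside the logarithm. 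One small point: in the paper's framework $\lambda^{(k+1)}$ is a \emph{density}, so the deterministic $\cD$-jumps of $\lambda^{(k)}$ are not ``folded into the drift'' as you suggest; they remain in the fluctuation term and are controlled separately (this is where $\sep(\cD)$ enters), so be careful there when you write out the details.
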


We make a few key remarks about the theorem. 

\begin{itemize}

\item (Interpreting the error bound \eqref{eq:error_bound}.) The first term on the right hand side corresponds to the approximation error incurred by a degree $\ell$ polynomial approximation, and the while the second term captures the error due to subgaussian-type concentration of the random process. Importantly, the subgaussian-type error bound only holds if $\delta$ is large enough; this is the reason for requiring the lower bound \eqref{eq:delta_lower_bound_generic}.

\item (Tradeoffs in $\delta$.) For a fixed $\ell \ge 1$, there exists a tradeoff in choosing $\delta$. As $\delta$ decreases, the polynomial approximation error naturally decreases as well, which reflects the suitability of the polynomial approximation in local regions. On the other hand, the concentration error increases as $\delta$ decreases, as random processes generally experience better (relative) concentration in larger areas. This tradeoff highlights a fundamental tension between approximation and concentration.

\item (Mitigating the tradeoff.) Increasing the value of $\ell$ -- which corresponds to using higher-order derivatives for detection -- mitigates the approximation-concentration tradeoff. Indeed, increasing $\ell$ significantly reduces the approximation error while only marginally affecting the concentration error.
    
\end{itemize}

The full proof details of the theorem can be found in Section \ref{sec:polynomial_approximation}.

\subsubsection{Analysis of higher-order derivatives}

Our next result formalizes \eqref{eq:prediction_error_informal}, showing that the order $\ell + 1$ discrete derivative captures the counterfactual residual associated with a degree $\ell$ polynomial approximation. The proof can be found in Section \ref{sec:higher_derivatives}.

\begin{lem}
\label{lemma:derivative_characterization} 
On the event $\Eapprox(T, \err, 3 \ell \delta, \ell)$ (see Definition \ref{def:Eapprox}), there exists a constant $c_2 = c_2(\ell) > 0$ such that following holds for all $t \in [0,T]$:
\begin{enumerate}

\item \label{item:predictive_error}
If $\cT \cap (t - 3\ell \delta, t) = \emptyset$, then 
\[
\left| \derivative N(t) - \left( N(t + \delta) - \overline{N}_\ell(t + \delta, t^-) \right) \right| \le c_2 \max \{ \delta \err, 1 \}.
\]

\item \label{item:predictive_error_polynomial}
If $\cT \cap (t -  3\ell \delta , t) = \emptyset$ and $\cT \cap (t, t + 3 \ell \delta) = \emptyset$ then 
\[
\left| \derivative N(t) - \left( \overline{N}_\ell(t + \delta, t) - \overline{N}_\ell(t + \delta, t^-) \right) \right| \le c_2 \max \{\delta \err , 1 \}.
\]

\item \label{item:small_derivative}
If $\cT \cap (t -  \ell \delta, t +  2\delta) = \emptyset$ then $| \derivative N(t) | \le c_2 \delta \err$.

\end{enumerate}
\end{lem}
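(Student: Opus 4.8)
The plan is to lean entirely on one algebraic identity: for any $\delta>0$ and any polynomial $P$ of degree at most $\ell$, the $(\ell+1)$-st discrete derivative annihilates $P$, i.e. $\sum_{j=0}^{\ell+1}(-1)^{\ell+1-j}\binom{\ell+1}{j}P(t+(j-\ell)\delta)=0$. Since for every fixed reference time $\tau$ the map $s\mapsto\overline N_\ell(s,\tau)$ is a degree-$\ell$ polynomial with $\cF_\tau$-measurable coefficients, subtracting it inside the discrete derivative costs nothing:
\[
\derivative N(t)=\sum_{j=0}^{\ell+1}(-1)^{\ell+1-j}\binom{\ell+1}{j}\Bigl(N\bigl(t+(j-\ell)\delta\bigr)-\overline N_\ell\bigl(t+(j-\ell)\delta,\tau\bigr)\Bigr)\qquad\text{for every }\tau.
\]
Each of the three claims then follows by picking $\tau$ appropriately, bounding every summand using the approximation event $\Eapprox(T,\err,3\ell\delta,\ell)$ (which, because its window parameter is $3\ell\delta$, supplies bounds of size $3\ell\delta\,\err$), and isolating whichever term the statement singles out. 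The constant $c_2=c_2(\ell)$ will absorb $\sum_j\binom{\ell+1}{j}=2^{\ell+1}$ together with the factor $3\ell$.

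For \textbf{Part 1}, take $\tau=t^-$, so $P=\overline N_\ell(\cdot,t^-)$; the evaluation points are $t-\ell\delta,\dots,t-\delta,t,t+\delta$. The first $\ell$ of them lie in $(t-3\ell\delta,t)$, which by hypothesis misses $\cT$, so the backward-prediction clause of $\Eapprox$ gives $|N(t+(j-\ell)\delta)-\overline N_\ell(t+(j-\ell)\delta,t^-)|\le 3\ell\delta\,\err$ for $j=0,\dots,\ell-1$. At the evaluation point $s=t$ the polynomial collapses to its constant term, $\overline N_\ell(t,t^-)=N(t^-)$, so the error there is $|N(t)-N(t^-)|\le 1$ because the counting process has unit jumps — this is exactly where the additive $1$ in the bound originates. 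Isolating the $j=\ell+1$ term of the annihilation identity, which is precisely $N(t+\delta)-\overline N_\ell(t+\delta,t^-)$, and applying the triangle inequality to the remaining $\ell+1$ terms yields $|\derivative N(t)-(N(t+\delta)-\overline N_\ell(t+\delta,t^-))|\le(2^{\ell+1}-1)\max\{3\ell\delta\,\err,1\}\le c_2\max\{\delta\,\err,1\}$. \textbf{Part 2} is then immediate: the extra hypothesis $\cT\cap(t,t+3\ell\delta)=\emptyset$ lets us invoke the forward-prediction clause of $\Eapprox$ at $s=t+\delta\in(t,t+3\ell\delta)$, giving $|N(t+\delta)-\overline N_\ell(t+\delta,t)|\le 3\ell\delta\,\err$; adding this to the Part 1 estimate via the triangle inequality and enlarging $c_2$ finishes it.

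For \textbf{Part 3} the target $c_2\delta\,\err$ carries no additive constant, which forces the reference point $\tau$ to be chosen away from all evaluation points (in Parts 1--2 the extra $1$ arose precisely because the evaluation point $t$ coincided with the reference). The natural choice is $\tau=t-\ell\delta$ used with the forward clause, so that $\overline N_\ell(t-\ell\delta,\tau)=N(t-\ell\delta)$ exactly, or a $\tau$ just beyond the right end of the window used with the backward clause; either way all $\ell+2$ evaluation points are controlled by the single polynomial $\overline N_\ell(\cdot,\tau)$ with error $O(\delta\,\err)$, and the annihilation identity gives $|\derivative N(t)|\le 2^{\ell+1}\cdot O(\delta\,\err)$. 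I expect the main obstacle to be the window bookkeeping: the hypothesis only furnishes $\cT\cap(t-\ell\delta,t+2\delta)=\emptyset$, an interval of length $(\ell+2)\delta$, whereas the clauses of $\Eapprox(T,\err,3\ell\delta,\ell)$ are phrased over windows of length $3\ell\delta$, strictly larger once $\ell\ge 2$. The way around this is to note that every evaluation point of $\derivative N(t)$ lies in $[t-\ell\delta,t+\delta]$, so any point of $\cT$ lying strictly to the right of $t+\delta$ — hence outside the convex hull of the evaluation points — cannot affect the approximation bounds we actually use; this localized version of the approximation guarantee is what the proof of Theorem~\ref{thm:approximation_error} really establishes (the blanket statement of $\Eapprox$ being a convenient weakening). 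An alternative is to factor $\derivative = \Delta_\delta^{(1)}\circ\Delta_\delta^{(\ell)}$ and recurse, since each $\Delta_\delta^{(\ell)}$ has a footprint of only $\ell\delta$. Once the window accounting is settled, the final estimate is, as before, just a triangle inequality over $2^{\ell+1}$ terms.
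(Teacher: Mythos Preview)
Your proposal is correct and follows the paper's approach essentially verbatim: the annihilation identity for degree-$\ell$ polynomials (Lemma~\ref{lemma:polynomial_derivative}), reference point $\tau=t^-$ for Parts~\ref{item:predictive_error}--\ref{item:predictive_error_polynomial} with the $j=\ell$ term handled via $|N(t)-N(t^-)|\le 1$, and reference point $\tau=t-\ell\delta$ for Part~\ref{item:small_derivative}. Your concern about the window bookkeeping in Part~\ref{item:small_derivative} is well-placed: the paper's proof takes $\tau=t-\ell\delta$ and invokes $\Eapprox$ directly, which as stated in Definition~\ref{def:Eapprox} would require $\cT\cap(t-\ell\delta,t+2\ell\delta)=\emptyset$ rather than the assumed $(t-\ell\delta,t+2\delta)$; the paper does not comment on this, and your observation that the proof of Theorem~\ref{thm:approximation_error} actually establishes the localized guarantee (hypothesis $\cT\cap(t,s]=\emptyset$ for the specific $s$ in question, as in Lemma~\ref{lemma:polynomial_approximation_fixed_t}) is the correct resolution.
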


In the lemma above, Item \ref{item:predictive_error} shows that the order $\ell + 1$ discrete derivative indeed captures the residual between the point process at time $t + \delta$ and its projected value based on information before time $t$. Items \ref{item:predictive_error_polynomial} and \ref{item:small_derivative} are a few consequences of this fact. 
Item \ref{item:small_derivative} establishes that, when $t$ is not too close to an abrupt change, the counterfactual residual -- and therefore the discrete derivative -- is small. It remains to show that $|\derivative N(t)|$ is large for $t \in \cT$, and Item \ref{item:predictive_error_polynomial} provides a way to do so via the polynomial coefficients of $\overline{N}_\ell( \cdot , t)$ and $\overline{N}_\ell(\cdot, t^-)$. To better understand the difference between these polynomials, we need to assume some regularity conditions related to $\cT$, which are given by the following event.

\begin{defn}[Properties of $\cT$]
\label{def:Eabrupt}
Let $A, \rho, \spacing > 0$, and let $\ell \ge 2$ be an integer. We say that the event $\Eabrupt(A, \rho, \spacing, \ell)$ holds if and only if the following conditions are satisfied:
\begin{enumerate}
    \item For all $t \in \cT$, $| \lambda^{(1)}(t) - \lambda^{(1)}(t^-) | \ge A$.
    \item For all $t \in \cT$, it holds that
    \[
    \max_{2 \le k \le \ell} \frac{| \lambda^{(k)}(t) - \lambda^{(k)}(t^-) |}{| \lambda^{(1)}(t) - \lambda^{(1)}(t^-) |} \le \rho.
    \]
    \item For distinct $s, t \in \cT$, it holds that $|s - t| \ge \spacing$ (equivalently, $\sep(\cT) \ge \spacing$).
\end{enumerate}
As a shorthand, we will often write $\Eabrupt$ instead of $\Eabrupt(A, \rho, \spacing,\ell)$.
\end{defn}

The main result of this subsection is that, on the event $\Eapprox \cap \Eabrupt$, the elements of $\cT$ can indeed be identified by large values of the discrete derivative.
The proof can be found in Section \ref{sec:higher_derivatives}.

\begin{thm}
\label{thm:derivative_thresholding}
Fix an integer $\ell \ge 1$. Suppose that as $S_{\le \ell + 1} \to \infty$, it holds that 
\begin{equation}
\label{eq:thm_parameter_conditions}
\max \left \{ J_{\le \ell},  \log \left( \frac{T^2 R^2 ( L_{\le \ell} + S_{\le \ell})^3 }{\sep( \cD) \land 1} \right) , \rho, \frac{1}{\spacing} \right \} = S_{\le \ell + 1}^{o(1)},
\end{equation}
and that $\delta = S_{\le \ell + 1}^{- 1 / (2 \ell + 1)}$.
Then the event $\Eapprox : = \Eapprox(T, \err, 3 \ell \delta, \ell)$ holds with probability $1 - o(1)$.
Moreover, if $A \ge S_{\le \ell + 1}^\theta$ for some $\theta > (\ell + 1) / (2 \ell + 1)$, we have on the event $\Eapprox \cap \Eabrupt$ that
\begin{equation}
\label{eq:estimating_T}
\cT \subseteq \left \{ t \in [0,T] : \left| \frac{1}{\delta} \derivative N(t) \right| \ge \frac{A}{2} \right \} \subseteq \left \{ t \in [0,T] : \exists t' \in \cT \text{ with } |t - t' | \le (\ell + 1) \delta  \right \}.
\end{equation}
\end{thm}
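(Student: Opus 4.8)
The plan is to dispatch the two assertions in turn: the probability bound $\p(\Eapprox) = 1 - o(1)$ is essentially a restatement of Theorem~\ref{thm:approximation_error}, while the double inclusion~\eqref{eq:estimating_T} is a deterministic consequence of Lemma~\ref{lemma:derivative_characterization} combined with the defining properties of $\Eabrupt$.

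For the probability bound I would invoke Theorem~\ref{thm:approximation_error} with window parameter $3\ell\delta$ in place of $\delta$. By~\eqref{eq:thm_parameter_conditions} the logarithmic factor appearing in~\eqref{eq:error_bound} and~\eqref{eq:delta_lower_bound_generic} is $S_{\le \ell+1}^{o(1)}$, so the right-hand side of~\eqref{eq:delta_lower_bound_generic} is $O(S_{\le\ell+1}^{-1+o(1)})$, which is $\ll \delta = S_{\le\ell+1}^{-1/(2\ell+1)}$; hence the hypothesis of Theorem~\ref{thm:approximation_error} holds and $\Eapprox(T,\err,3\ell\delta,\ell)$ holds with probability at least $1 - 2/S_{\le\ell} = 1-o(1)$. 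It then remains to read off the size of $\err$ from~\eqref{eq:error_bound} with $\delta$ replaced by $3\ell\delta$: the approximation term satisfies $\delta^\ell S_{\ell+1} \le S_{\le\ell+1}^{(\ell+1)/(2\ell+1)}$, and, using $J_{\le\ell},\rho,1/\spacing$ and the log factor all equal to $S_{\le\ell+1}^{o(1)}$ together with $S_1 \le S_{\le\ell+1}$, the concentration term is at most $\big(S_{\le\ell+1}^{1 + 1/(2\ell+1) + o(1)}\big)^{1/2} = S_{\le\ell+1}^{(\ell+1)/(2\ell+1) + o(1)}$. The choice $\delta = S_{\le\ell+1}^{-1/(2\ell+1)}$ is exactly the one equalizing these two exponents, so $\err = S_{\le\ell+1}^{(\ell+1)/(2\ell+1) + o(1)}$; combined with $A \ge S_{\le\ell+1}^\theta$, $\theta > (\ell+1)/(2\ell+1)$, and~\eqref{eq:thm_parameter_conditions}, this yields the facts I use below: $\err = o(A)$, $1/\delta = o(A)$, $\rho\delta \to 0$, and $3\ell\delta \ll \spacing$.

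On the event $\Eapprox \cap \Eabrupt$ I would then prove the two inclusions in~\eqref{eq:estimating_T}. The second (no false positives) is the contrapositive of Item~\ref{item:small_derivative} of Lemma~\ref{lemma:derivative_characterization}: if no point of $\cT$ lies within distance $(\ell+1)\delta$ of $t$, then a fortiori $\cT \cap (t-\ell\delta, t+2\delta) = \emptyset$, so $|\tfrac1\delta\derivative N(t)| \le c_2\err < A/2$. For the first inclusion (no false negatives), fix $t\in\cT$; since $\sep(\cT)\ge\spacing$ and $3\ell\delta \ll \spacing$, no other element of $\cT$ lies in $(t-3\ell\delta,t+3\ell\delta)$, so Item~\ref{item:predictive_error_polynomial} of Lemma~\ref{lemma:derivative_characterization} gives
\[
\left| \derivative N(t) - \big(\overline{N}_\ell(t+\delta,t) - \overline{N}_\ell(t+\delta,t^-)\big) \right| \le c_2 \max\{\delta\err, 1\}.
\]
Expanding both polynomials via~\eqref{eq:poly_approx} (and $\overline{N}_\ell(t+\delta,t^-) = N(t^-) + \sum_{k=1}^\ell \tfrac{\lambda^{(k)}(t^-)}{k!}\delta^k$), the parenthesized difference equals $(N(t)-N(t^-)) + \sum_{k=1}^\ell \tfrac{\lambda^{(k)}(t)-\lambda^{(k)}(t^-)}{k!}\delta^k$; by the properties in $\Eabrupt$ the $k=1$ term has absolute value at least $A\delta$, the $k\ge 2$ terms contribute at most $O(\rho A\delta^2)$, and $|N(t)-N(t^-)| = O(1)$. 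Dividing the displayed bound by $\delta$ and assembling,
\[
\left| \tfrac1\delta \derivative N(t) \right| \ge A - O(\rho A\delta) - O(1/\delta) - c_2\max\{\err, 1/\delta\} \ge \tfrac A2
\]
for $S_{\le\ell+1}$ large, since every error term is $o(A)$. This establishes $\cT \subseteq \{t : |\tfrac1\delta\derivative N(t)| \ge A/2\}$ and completes the proof.

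I expect the only genuinely delicate step to be the $\err$ computation: confirming that with $\delta = S_{\le\ell+1}^{-1/(2\ell+1)}$ both the polynomial-approximation error $\delta^\ell S_{\ell+1}$ and the concentration error $\sqrt{S_1/\delta}$ (up to $S_{\le\ell+1}^{o(1)}$ corrections) land on the critical order $S_{\le\ell+1}^{(\ell+1)/(2\ell+1)}$, which is precisely the threshold the assumption $\theta > (\ell+1)/(2\ell+1)$ is tailored to beat. The remaining moving parts — isolating the dominant $k=1$ term in the polynomial difference via $\Eabrupt$, and absorbing the $O(1)$ jump of $N$ at $t\in\cT$ once one divides through by $\delta$ — are routine, as are the window containments $(t-\ell\delta,t+2\delta)\subseteq (t-(\ell+1)\delta, t+(\ell+1)\delta)$ and $3\ell\delta \ll \spacing$ where the separation hypothesis on $\cT$ enters.
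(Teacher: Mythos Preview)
Your proposal is correct and follows essentially the same approach as the paper: invoke Theorem~\ref{thm:approximation_error} (with window $3\ell\delta$) for the probability bound, compute $\err = S_{\le\ell+1}^{(\ell+1)/(2\ell+1)+o(1)}$, then use Item~\ref{item:small_derivative} of Lemma~\ref{lemma:derivative_characterization} for the second inclusion and Item~\ref{item:predictive_error_polynomial} together with the $\Eabrupt$ properties for the first. One small point of care: in your first-inclusion bound the $k\ge 2$ terms are controlled by $\rho\,\delta\,|\lambda^{(1)}(t)-\lambda^{(1)}(t^-)|$, not $\rho A\delta^2$, since $|\lambda^{(1)}(t)-\lambda^{(1)}(t^-)|$ may exceed $A$; the paper handles this by factoring out $|\lambda^{(1)}(t)-\lambda^{(1)}(t^-)|$ to get a $(1-\delta\rho)$ multiplier, but your conclusion is unaffected because the $k=1$ term scales the same way and $\rho\delta = o(1)$.
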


The main takeaway of the theorem is \eqref{eq:estimating_T}, which shows that there are effectively no false positive or false negatives if change-points are detected by thresholding the order $\ell + 1$ discrete derivative of the point process.
Indeed, the first inclusion in \eqref{eq:estimating_T} ensures that all abrupt changes in the rate function do lead to large values in the order $\ell + 1$ derivative.
On the other hand, the second inclusion shows that for all points in time that are not $(\ell + 1)\delta$-close to a change-point, the order $\ell + 1$ derivative will not exceed the threshold $A /2$.

The main assumption of note is \eqref{eq:thm_parameter_conditions}.
The assumptions on $J_{\le \ell}, \rho$, and $\spacing$ are quite natural. 
If $J_{\le \ell} = S_{\le \ell + 1}^{o(1)}$, then there is a clear separation between ``small" jumps and the large, abrupt jumps characterized by $\cT$; this allows us to establish clear results on change-point detection. 
Concerning $\rho$ and $\spacing$, prior discussions in the text have (sometimes implicitly) assumed that they are constant order with respect to $S_{\le \ell + 1}$; the theorem gives us slightly more leeway in allowing $\rho$ and $\spacing$ to grow or decay, respectively, slower than a polynomial function of the smoothness parameter.
Finally, the condition that
\[
\log \left( \frac{T^2 R^2 ( L_{\le \ell} + S_{\le \ell})^3 }{\sep( \cD) \land 1} \right) = S_{\le \ell + 1}^{o(1)}
\]
is made for mathematical simplicity. 
Various scenarios could cause the condition above not be satisfied; these include $T$ or $R$ being extremely large relative to $S_{\le \ell + 1}$, the smoothness parameters $L_k, S_k$ growing extremely quickly as a function of $k$, or points of discontinuity being extremely close to each other (with a distance decaying faster than an inverse polynomial of $S_{\le \ell + 1}$). All of these situations are quite unusual for the examples of interest, leading \eqref{eq:thm_parameter_conditions} to be readily satisfied.

\section{Empirical analysis}
\label{sec:empirical}

\subsection{A smooth-plus-jump rate function}

To illustrate our results in a simple and clean setting, we assume that the rate function has the form 
\begin{equation}
\label{eq:simple_rate_function}
\Lambda(t) = 10^6(1 + \sin(t)) + A e^{- (t - t_0)} \mathbf{1}(t \ge t_0),
\end{equation}
where $A, t_0 > 0$ are fixed parameters. The point process $N(t)$ with rate function $\Lambda$ is an example where a relatively transient jump could be masked by global trends or fluctuations, represented by the sinusoid. 
We consider values of $A$ between 20,000 and 80,000, which are quite small compared to the sinusoidal component of $\Lambda$.
Nevertheless, our methods can identify such jumps.
Figure \ref{fig:simple_rate_derivatives} shows an example of our procedure in the case $(A, t_0) = (40,000, 9)$ for $t \in [0, 20]$. The jump is nearly invisible to the naked eye from the plots of binned event counts of $N(t)$ (i.e., the first order discrete derivative).
With another derivative, a small spike at $t_0 = 9$ can be observed. This spike is enhanced with three derivatives, and a fourth derivative clearly exposes the change-point through a sharp change of large magnitude.

\begin{figure}[t]
    \centering
    \begin{subfigure}[b]{0.45\textwidth}
        \centering
        \includegraphics[width=\textwidth]{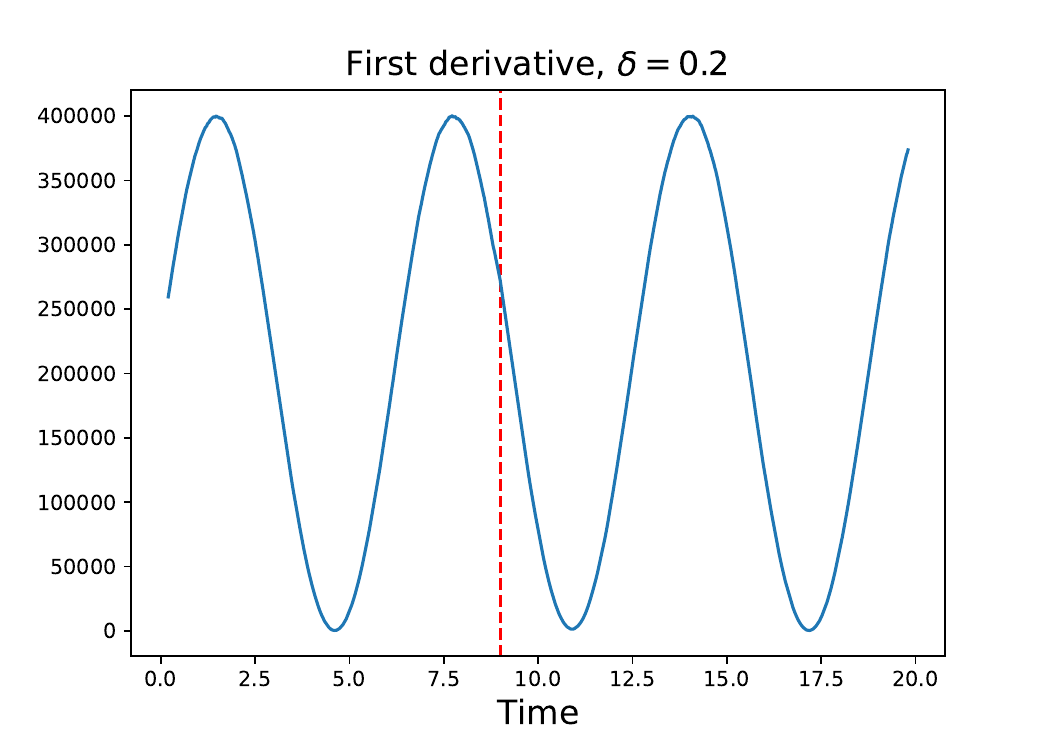}
        \caption{Plot of $\Delta_\delta^{(1)} N(t)$.}
    \end{subfigure}
    \begin{subfigure}[b]{0.45\textwidth}
        \centering
        \includegraphics[width=\textwidth]{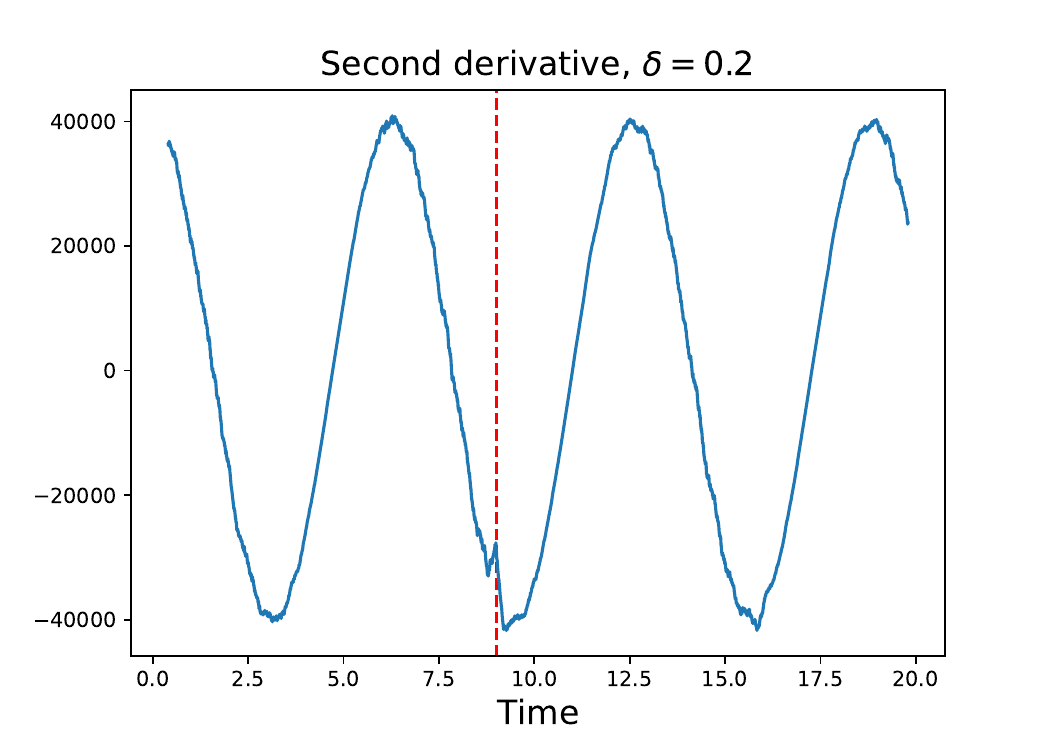}
        \caption{Plot of $\Delta_\delta^{(2)} N(t)$.}
    \end{subfigure}
    \\
    \begin{subfigure}[b]{0.45\textwidth}
        \centering
        \includegraphics[width=\textwidth]{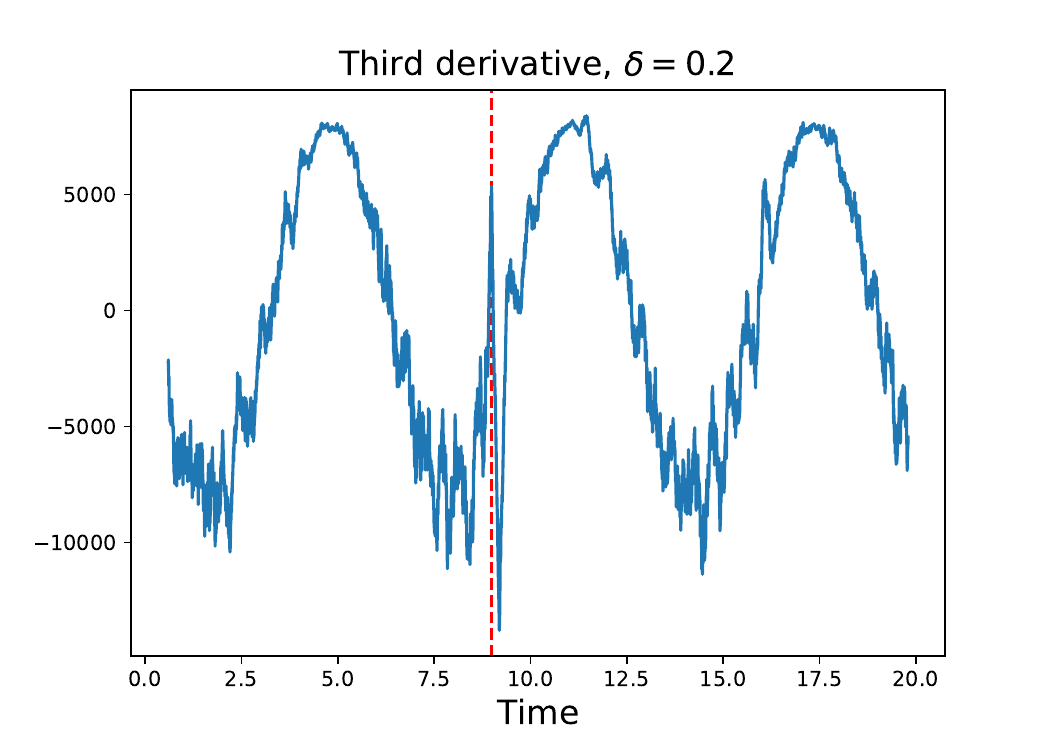}
        \caption{Plot of $\Delta_\delta^{(3)} N(t)$.}
    \end{subfigure}
    \begin{subfigure}[b]{0.45\textwidth}
        \centering
        \includegraphics[width=\textwidth]{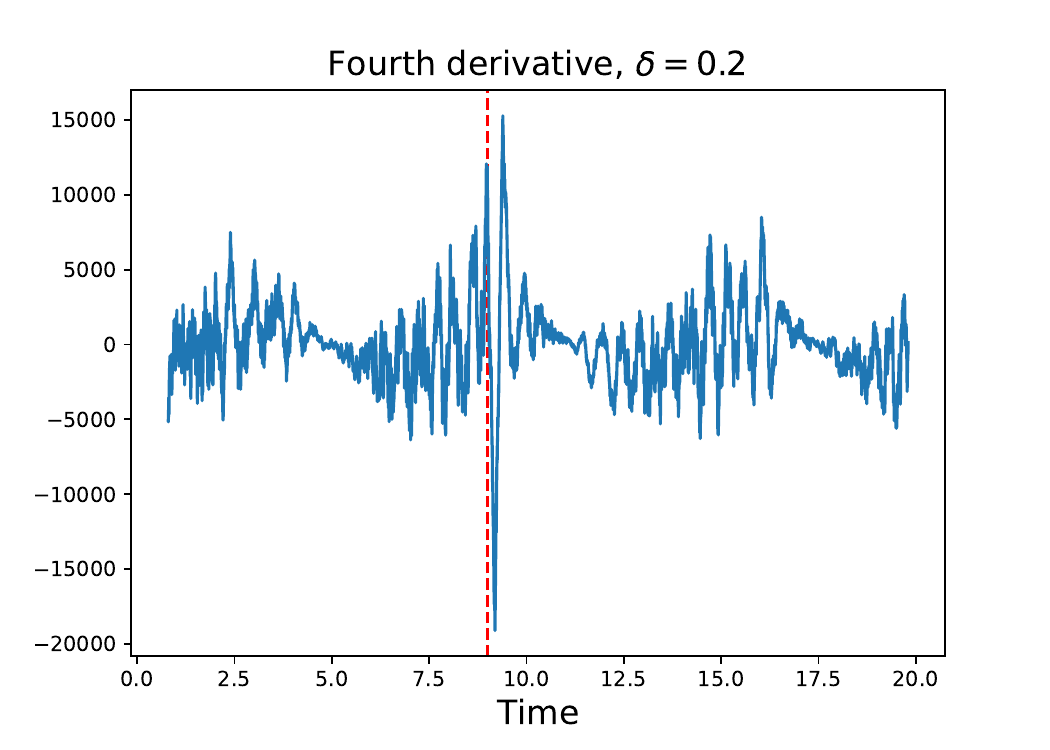}
        \caption{Plot of $\Delta_\delta^{(4)} N(t)$.}
    \end{subfigure}
    \caption{Discrete derivatives of $N(t)$ with $\delta = 0.2$. The red dotted line represents the jump occurring at $t_0 = 9$.}
    \label{fig:simple_rate_derivatives}
\end{figure}

Next, we assess our method's performance by considering $A \in \{20000, 40000, 60000, 80000 \}$ and $t_0$ drawn uniformly from the interval $[5, 15]$.\footnote{Since we simulate the process in the interval $[0, 20]$, choosing the jump to be in $[5, 15]$ avoids potential boundary effects associated with taking discrete derivatives close to $t = 0$ or $t = 20$.}
For a given $(k, \delta)$, we estimate $t_0$ via $\hat{t} = \argmax_{t \ge 0} | \Delta_\delta^{(k)} N(t) |$, i.e., it is the time index with the largest absolute value of the order $k$ derivative with discretization $\delta$. 
We run 100 independent simulations of the point process for each value of $A$ and for $k \in \{1, \ldots, 10\}$ and $\delta \in [0.05, 0.5]$.
The estimation error, which is equal to the absolute difference between $t_0$ and our estimate of $t_0$, is averaged over all trials for a given $(k, \delta)$ and visualized in the heatmaps of Figure \ref{fig:heatmap_synthetic}.

To interpret the results displayed in the heatmap, we begin by making a few baseline observations. Since $t_0$ is sampled uniformly from $[5, 15]$, a na\"{i}ve estimate which outputs a uniform random element of $[5, 15]$ has expected estimation error $10/3 \approx 3.33$.
If we examine the heatmap in Figure \ref{fig:heatmap_A_20} corresponding to $A = 20000$, we see that most of the values are at least as large as 3, indicating that our methods are largely ineffective.
However, for larger values of $A$, our methods are quite effective, as indicated by the large number of small values for the average estimation error for appropriate values of $k$ and $\delta$. Naturally, as $A$ increases, the number of small values for the average estimation error increases correspondingly, since larger jumps are easier to detect from a statistical point of view.

We remark that our theoretical results imply that jumps of size asymptotically larger than the square root of the ``smoothness" parameter -- which in this case is smaller than 1500 for all examples considered -- can be detected. However, in this empirical example, a jump of larger size ($A = 20000$) cannot be detected with reasonable power through our methods. This indicates that while our methods are asymptotically optimal, there is still room for improvement in finite-parameter regimes. We defer a more detailed investigation of this phenomenon to future work.

Finally, we comment on the interplay between $A, k, \delta$, and the estimation error. From Figures \ref{fig:heatmap_A_40}, \ref{fig:heatmap_A_60} and \ref{fig:heatmap_A_80}, there exists a minimum number of derivatives required to have a small estimation error (this number being $k = 4$ for $A = 40000$ and $k = 3$ for $A = 60000, 80000$). There appears to be a phase transition in this value of $k$; with $k - 1$ derivatives, the minimum value of the estimation error is considerably larger than the minimum estimation error with $k$ derivatives. Additionally, for a fixed $k$ larger than the critical value, the estimation error behaves non-monotonically as a function of $\delta$; this is best illustrated in the cases $A = 40000, 60000$ (Figures \ref{fig:heatmap_A_40} and \ref{fig:heatmap_A_60}). 
The estimation error is large for very small $\delta$, and then sharply decreases at some critical value. The estimation error stays low, and then gradually increases as $\delta$ becomes larger. 
We interpret this phenomenon as follows. If $\delta$ is too small, then the impact of the jump may be lost in the ambient randomness of the process; the point at which the estimation error decreases is when $\delta$ is large enough to ensure that the measurement of the jump is larger than the standard deviation of the process.
On the other hand, since the discrete derivative essentially deals with a version of the point process that is quantized into bins of size $\delta$, the estimation error gradually increases with $\delta$ since the quantization becomes more significant, but in a continuous manner.

\begin{figure}[t]
  \centering
  \begin{subfigure}[b]{0.45\textwidth}
    \includegraphics[width=\textwidth]{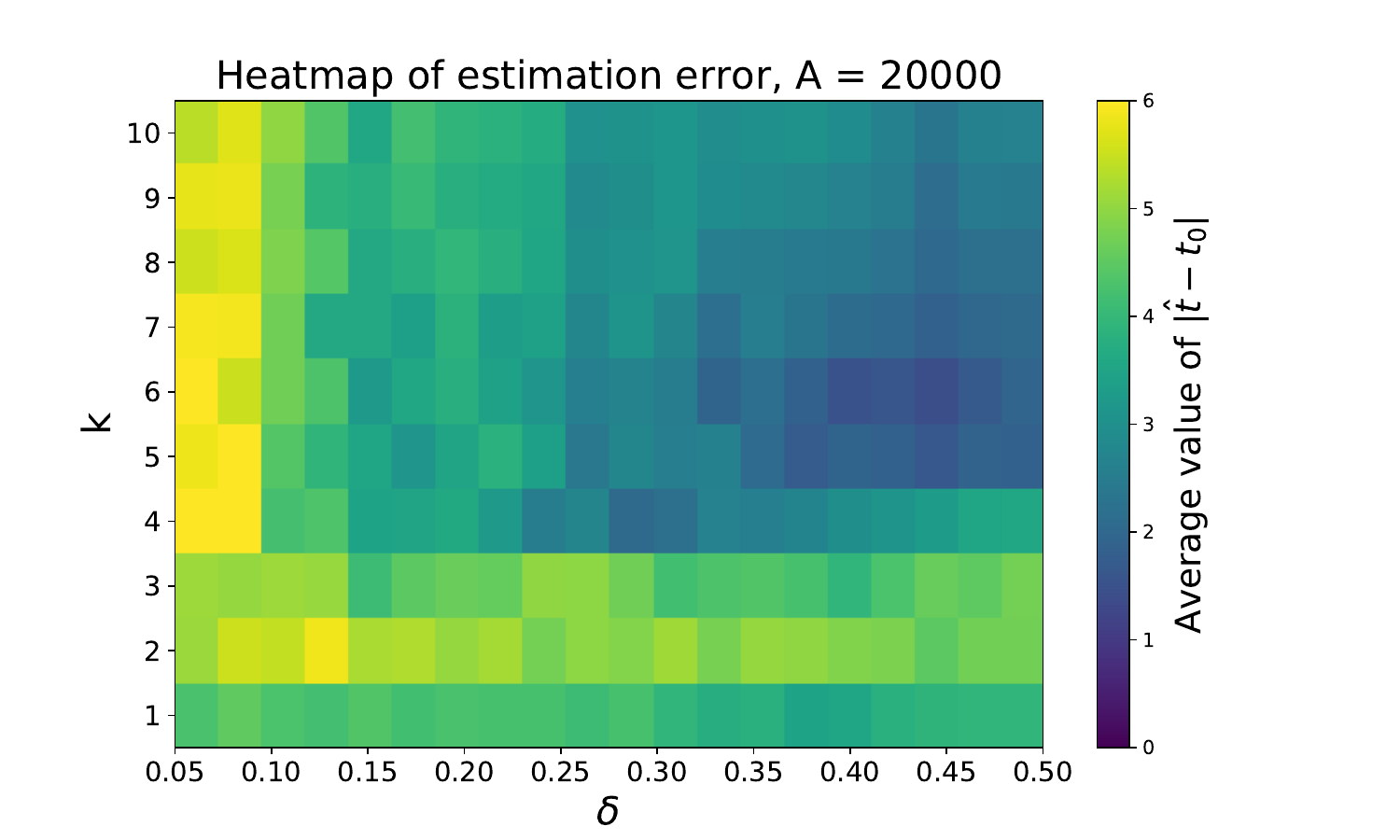}
    \caption{$(k_{\min}, \delta_{\min}) = (6, 0.45)$, estimation error = 1.45.}
    \label{fig:heatmap_A_20}
  \end{subfigure}
  \begin{subfigure}[b]{0.45\textwidth}
    \includegraphics[width=\textwidth]{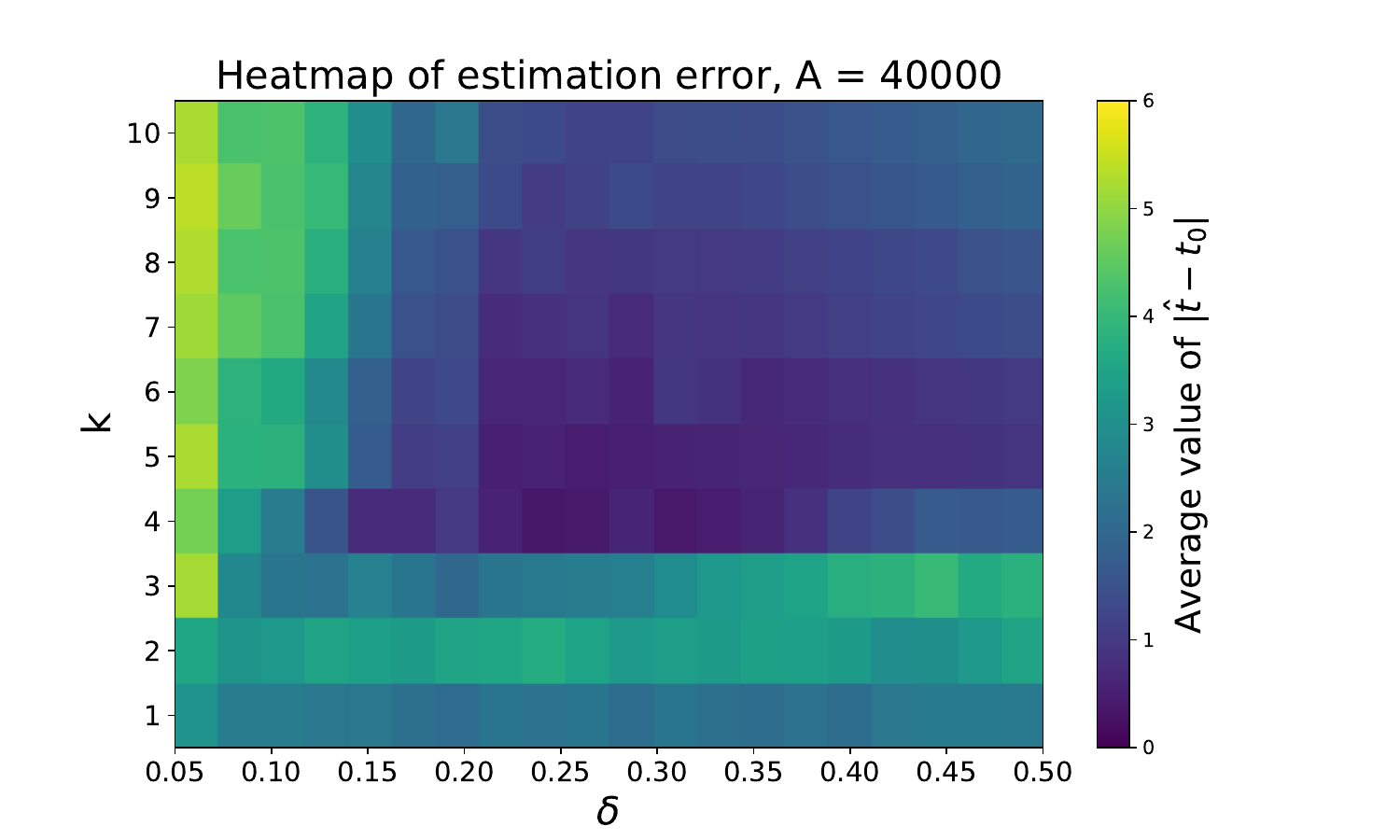}
    \caption{$(k_{\min}, \delta_{\min}) = (4, 0.24)$, estimation error = 0.39.}
    \label{fig:heatmap_A_40}
  \end{subfigure}
  \\[1ex]
  \begin{subfigure}[b]{0.45\textwidth}
    \includegraphics[width=\textwidth]{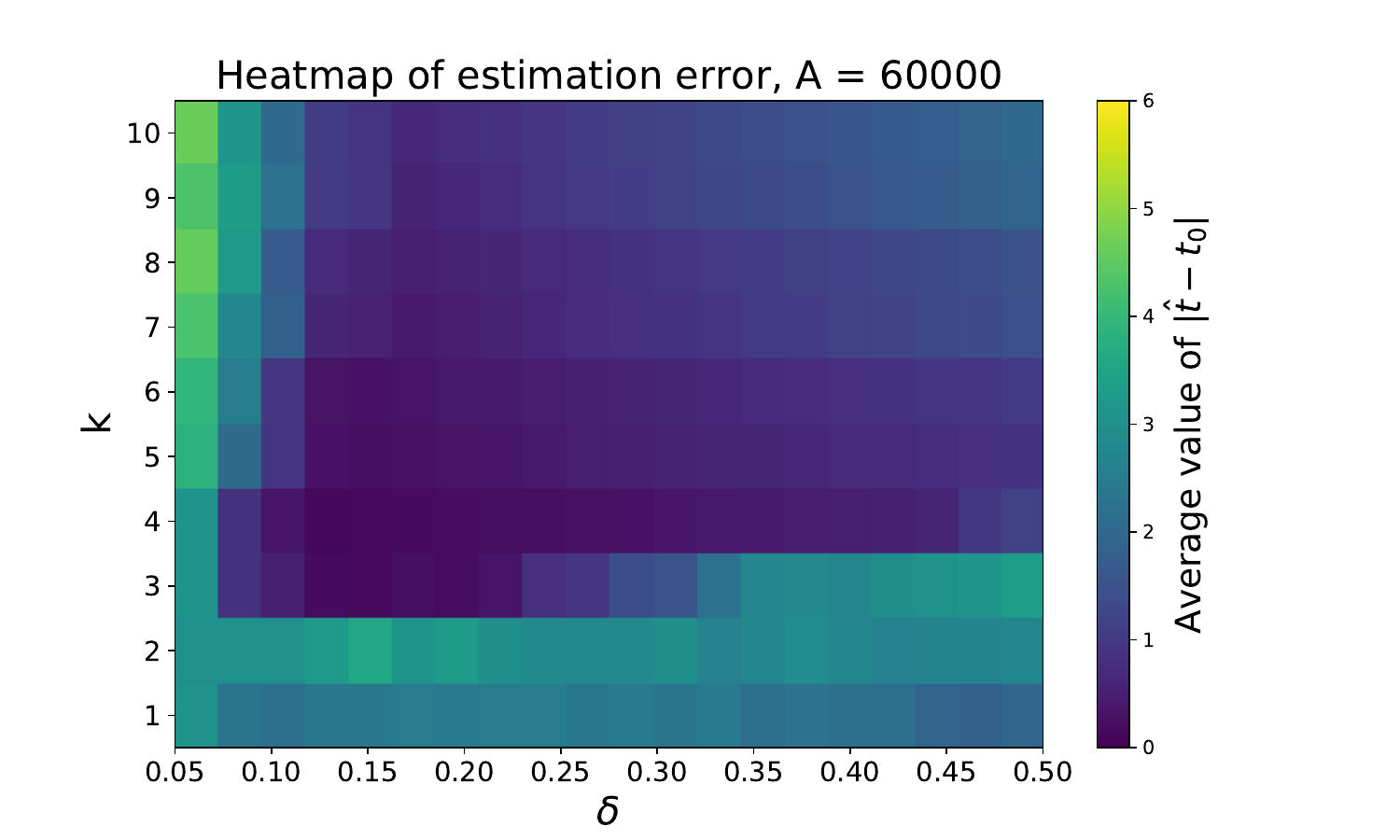}
    \caption{$(k_{\min}, \delta_{\min}) = (4, 0.12)$, estimation error = 0.12.}
    \label{fig:heatmap_A_60}
  \end{subfigure}
  \begin{subfigure}[b]{0.45\textwidth}
    \includegraphics[width=\textwidth]{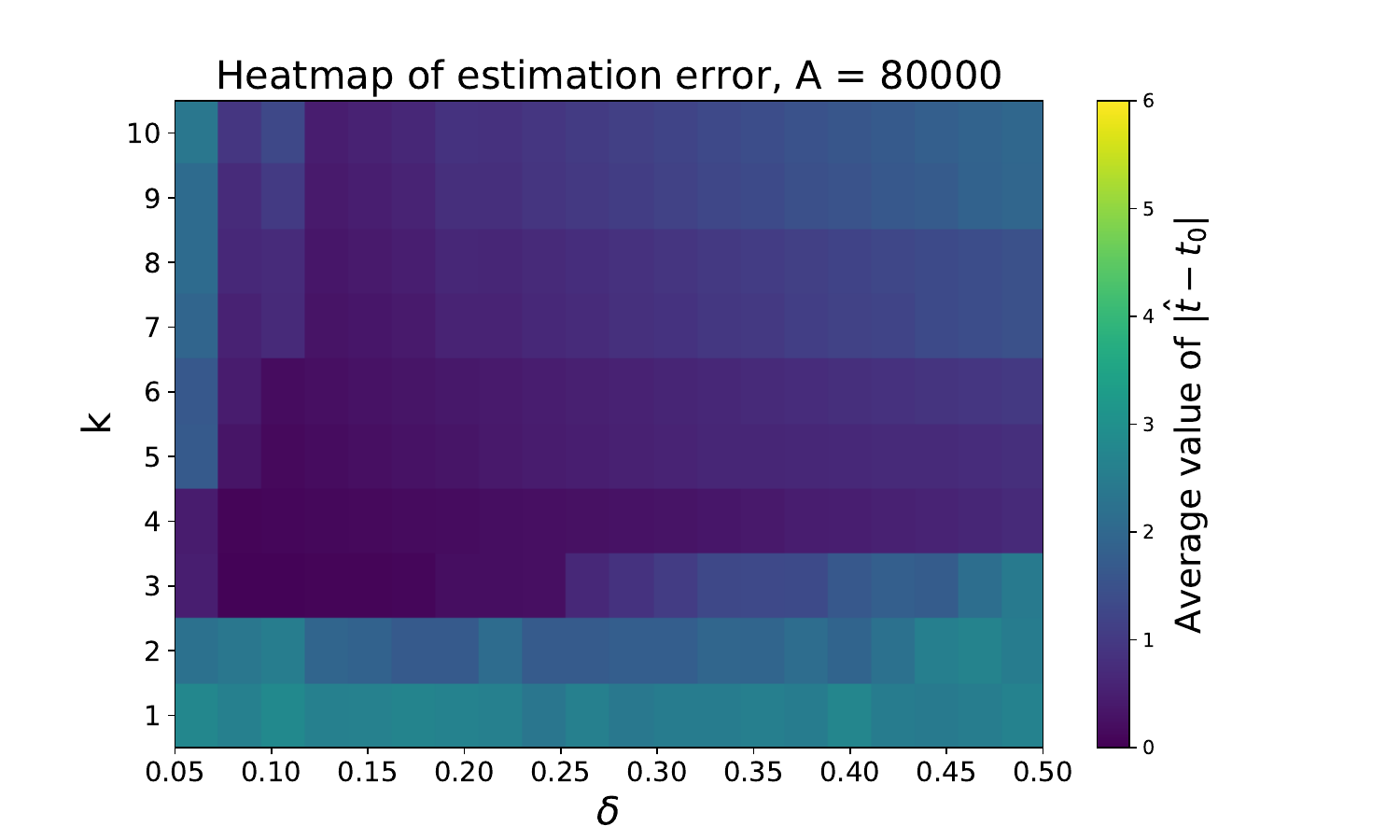}
    \caption{$(k_{\min}, \delta_{\min}) = (3, 0.07)$, estimation error = 0.05.}
    \label{fig:heatmap_A_80}
  \end{subfigure}
  \caption{Heatmaps of the difference between the estimated and true value of $t_0$, for $A \in \{20000, 40000, 60000, 80000 \}$. In the subfigure captions above, $k_{\min}$ and $\delta_{\min}$ denote the values of $k, \delta$ that minimize the estimation error in the heatmap range.}
  \label{fig:heatmap_synthetic}
\end{figure}

\subsection{The Susceptible-Infected (SI) process}
Next, we apply our methods to detect high-degree vertices in the graph underlying a SI process, providing a concrete illustration of Theorem~\ref{thm:estimating_high_degrees}.
We assume that the graph has a simple structure: it is a balanced binary tree of roughly 500,000 vertices (height = 18), with $D$ children added to one of the vertices in the second-to-last layer of the binary tree.
This construction produces a graph where all vertices have degree at most 3 except for a single high-degree node of degree $D + 2$.
While our results hold for much more general topologies, this graph provides a simple baseline for which our results can be clearly visualized.\footnote{We chose this graph in particular for a few reasons. It is faster to simulate large-scale SI processes on trees, since there is a single path between any pair of vertices. Additionally, balanced binary trees are sparse and simple to describe.}
Even for values of $D$ that are quite small relative to the size of the graph, our methods can cleanly detect when the high-degree vertex is infected. Figure \ref{fig:si_derivatives} illustrates our methods for a particular realization of the SI process with $D = 3000$.
The jump in infections caused by the high-degree vertex cannot be clearly discerned from from the cumulative infection curve (Figure \ref{fig:si_d0}) or the first derivative (Figure \ref{fig:si_d1}). However, a sharp jump can be seen from the plot of the second derivative (Figure \ref{fig:si_d2}), with a clear spike in the third derivative that aligns with the infection time of the high-degree vertex (Figure \ref{fig:si_d3}).

\begin{figure}[t]
    \centering
    \begin{subfigure}[b]{0.45\textwidth}
        \centering
        \includegraphics[width=\textwidth]{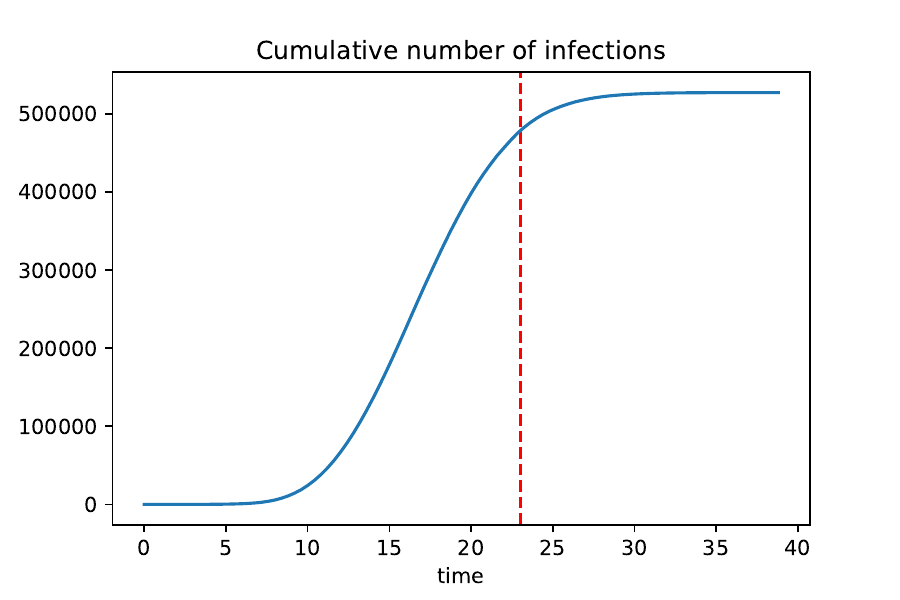}
        \caption{Plot of $N(t)$.}
        \label{fig:si_d0}
    \end{subfigure}
    \begin{subfigure}[b]{0.45\textwidth}
        \centering
        \includegraphics[width=\textwidth]{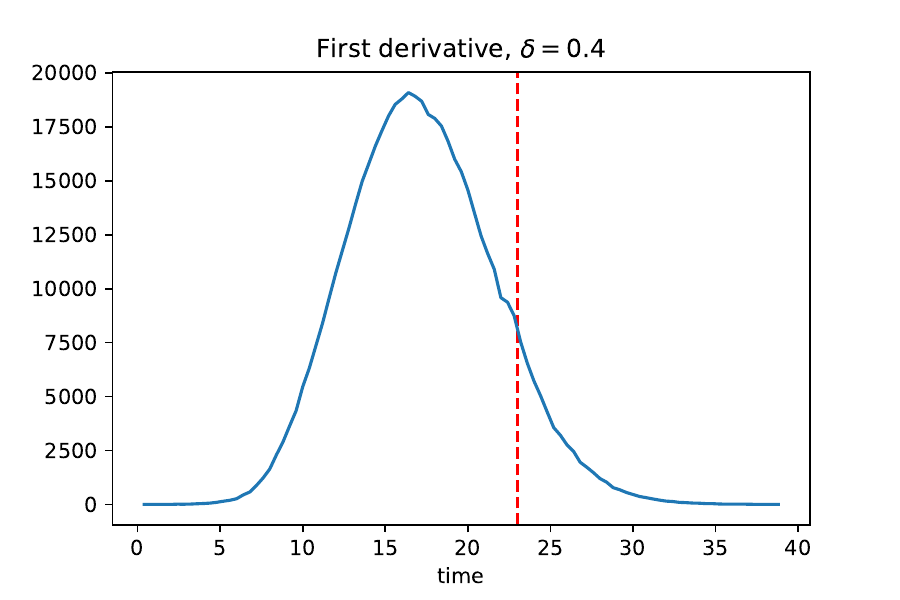}
        \caption{Plot of $\Delta_\delta^{(1)} N(t)$.}
        \label{fig:si_d1}
    \end{subfigure}
    \\
    \begin{subfigure}[b]{0.45\textwidth}
        \centering
        \includegraphics[width=\textwidth]{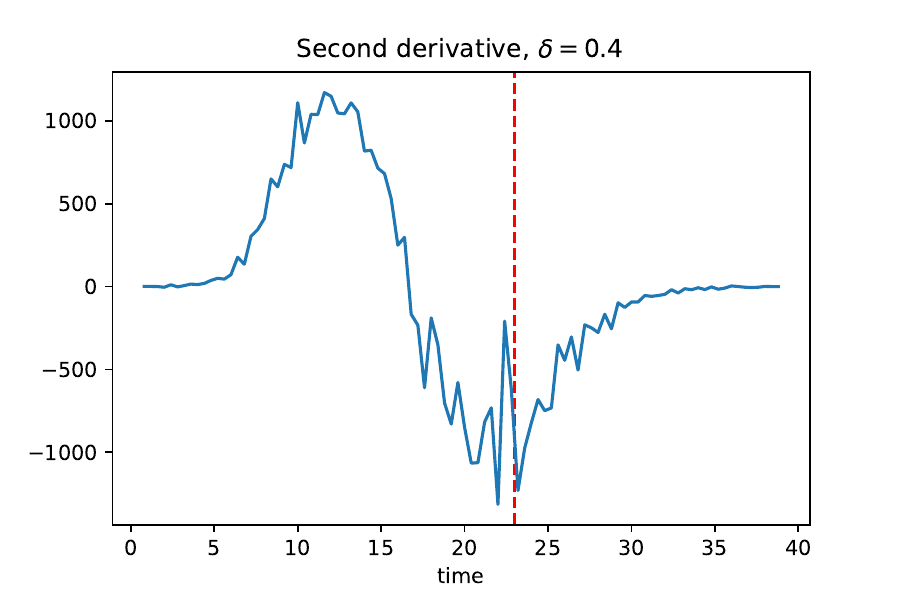}
        \caption{Plot of $\Delta_\delta^{(2)} N(t)$.}
        \label{fig:si_d2}
    \end{subfigure}
    \begin{subfigure}[b]{0.45\textwidth}
        \centering
        \includegraphics[width=\textwidth]{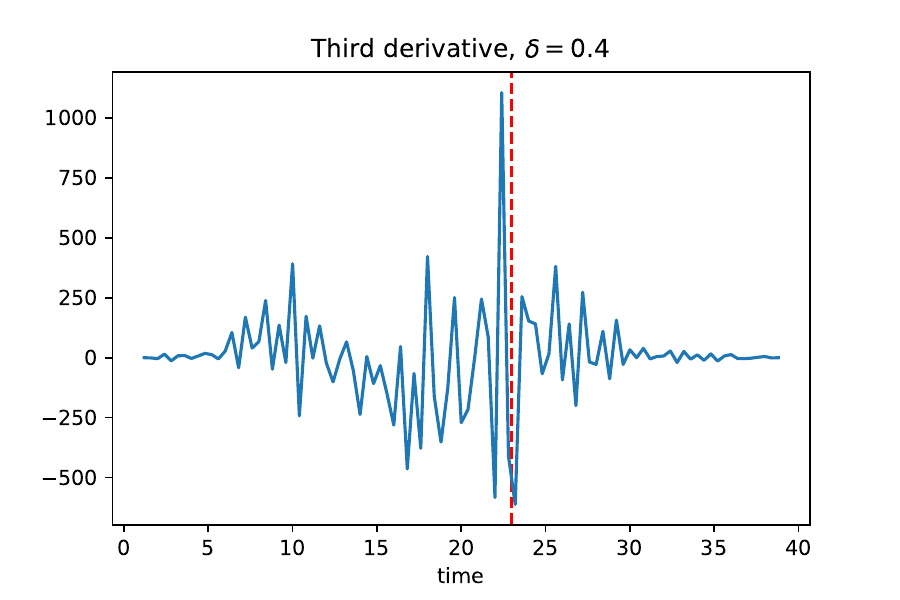}
        \caption{Plot of $\Delta_\delta^{(3)} N(t)$.}
        \label{fig:si_d3}
    \end{subfigure}
    \caption{Discrete derivatives of $N(t)$ with $\delta = 0.4$, for a SI process spreading on a balanced binary tree on approximately 500,000 vertices with a single vertex of degree $3002$. The red dotted line represents the infection time of this high-degree vertex, which is at $t = 23.00$.}
    \label{fig:si_derivatives}
\end{figure}

To assess our method's performance in more detail, we analyze the results of 200 independent simulations of the SI process on the same graph, where we choose $D \in \{2000, 4000, 6000, 8000\}$, $k \in \{1, \ldots, 5 \}$ and $\delta \in [0.1, 2.0]$. 
For a given $(k, \delta)$, our estimate of the high-degree vertex's infection time is $\hat{t} : = \argmax_{t \ge 0} | \Delta_\delta^{(k)} N(t) |$.
The estimation error, which is equal to the absolute difference between the infection time of the high-degree vertex and our estimate, is averaged over all trials for a given $(k, \delta)$ and visualized in the heatmaps of Figure \ref{fig:si_heatmaps}.

To properly assess the performance of our algorithm, it is useful to compare it to a simple baseline method. 
Since a large number of events occur in a small window after the change-point, a natural na\"ive method is to output the time index $t$ for which the largest number of events occur in the time interval $[t, t + \delta]$. In other words, we choose the argmax of the discrete first-order derivative; we refer to this as the \emph{first derivative estimator}. 
A more principled baseline, which was studied in detail for the SI process in \cite{mossel2024finding}, is the argmax of the second-order discrete derivative (succinctly, the \emph{second derivative estimator}). As seen in the heatmaps of Figure \ref{fig:si_heatmaps}, the second derivative estimator outperforms the first derivative estimator only when $D \in \{6000,8000\}$. This aligns with our theory, which posits that the second derivative estimator works well when $D \gtrsim n^{2/3} \approx 6000$. 
Finally, we note that it is clear in Figure \ref{fig:si_heatmaps} that more derivatives helps for the choices of $D$ considered, as seen by the presence of darker cells for appropriate $\delta$ and $k \ge 3$. In Figure \ref{fig:si_heatmaps}, we highlight this insight by comparing the estimation error of the optimal choices of $k,\delta$ (chosen based on the minimum error of empirical results) to the estimation error of the first and second derivative estimates, optimized over choices of $\delta$. Except for the case $D = 8000$ (Figure \ref{fig:si_D_8000}), there is a clear gain in taking more than two derivatives.

\begin{figure}[t]
    \centering
    \begin{subfigure}[b]{0.45\textwidth}
        \centering
        \includegraphics[width=\textwidth]{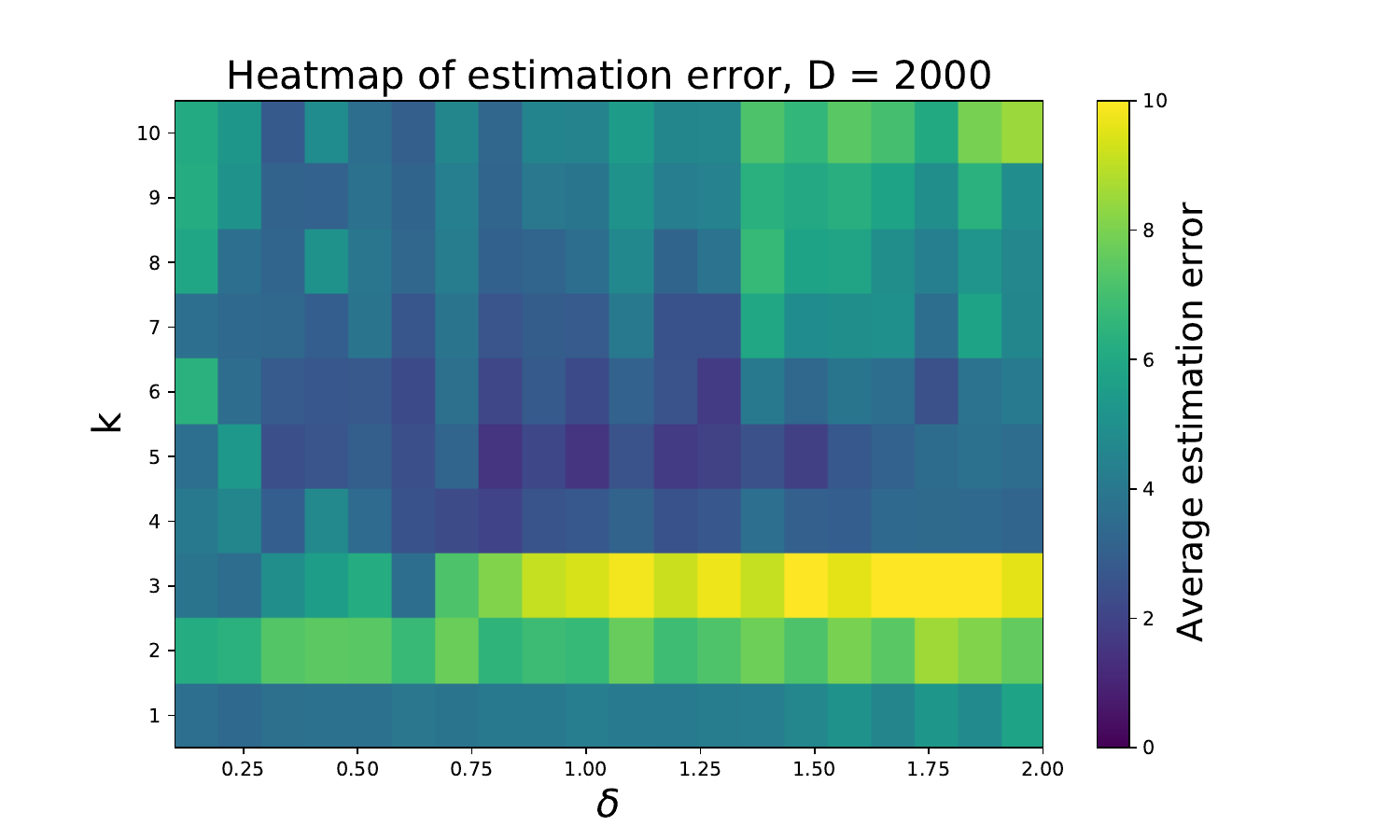}
        \caption{$(k_{\min}, \delta_{\min}) = (5, 0.8)$, estimation error = 1.48. \\
        Smallest estimation error of 1st derivative = 3.43.\\
        Smallest estimation error of 2nd derivative = 6.19.
        }
        \label{fig:si_D_2000}
    \end{subfigure}
    \begin{subfigure}[b]{0.45\textwidth}
        \centering
        \includegraphics[width=\textwidth]{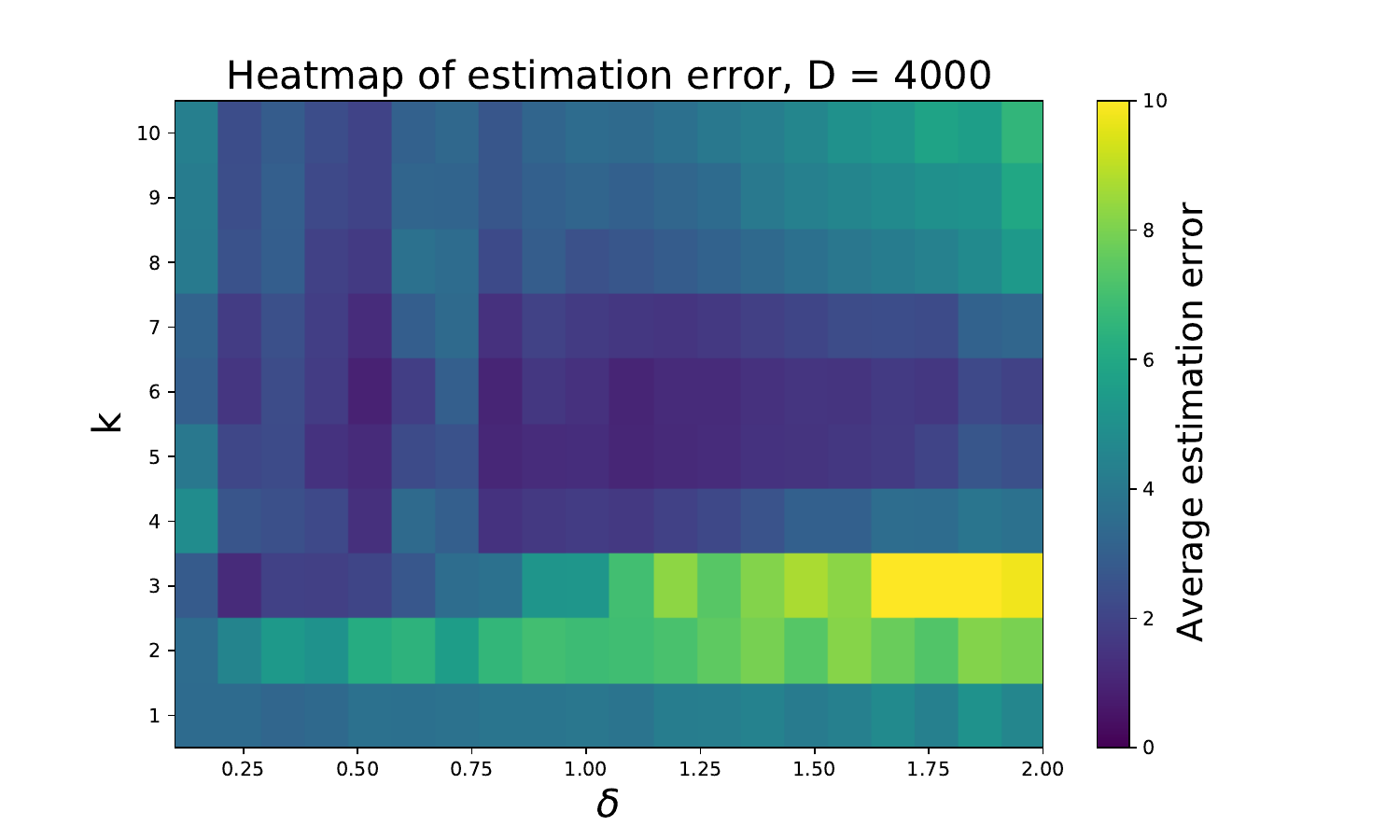}
        \caption{$(k_{\min}, \delta_{\min}) = (6, 0.5)$, estimation error = 0.95. \\
        Smallest estimation error of 1st derivative = 3.31. \\
        Smallest estimation error of 2nd derivative = 3.52.
        }
        \label{fig:si_D_4000}
    \end{subfigure}
    \\
    \begin{subfigure}[b]{0.45\textwidth}
        \centering
        \includegraphics[width=\textwidth]{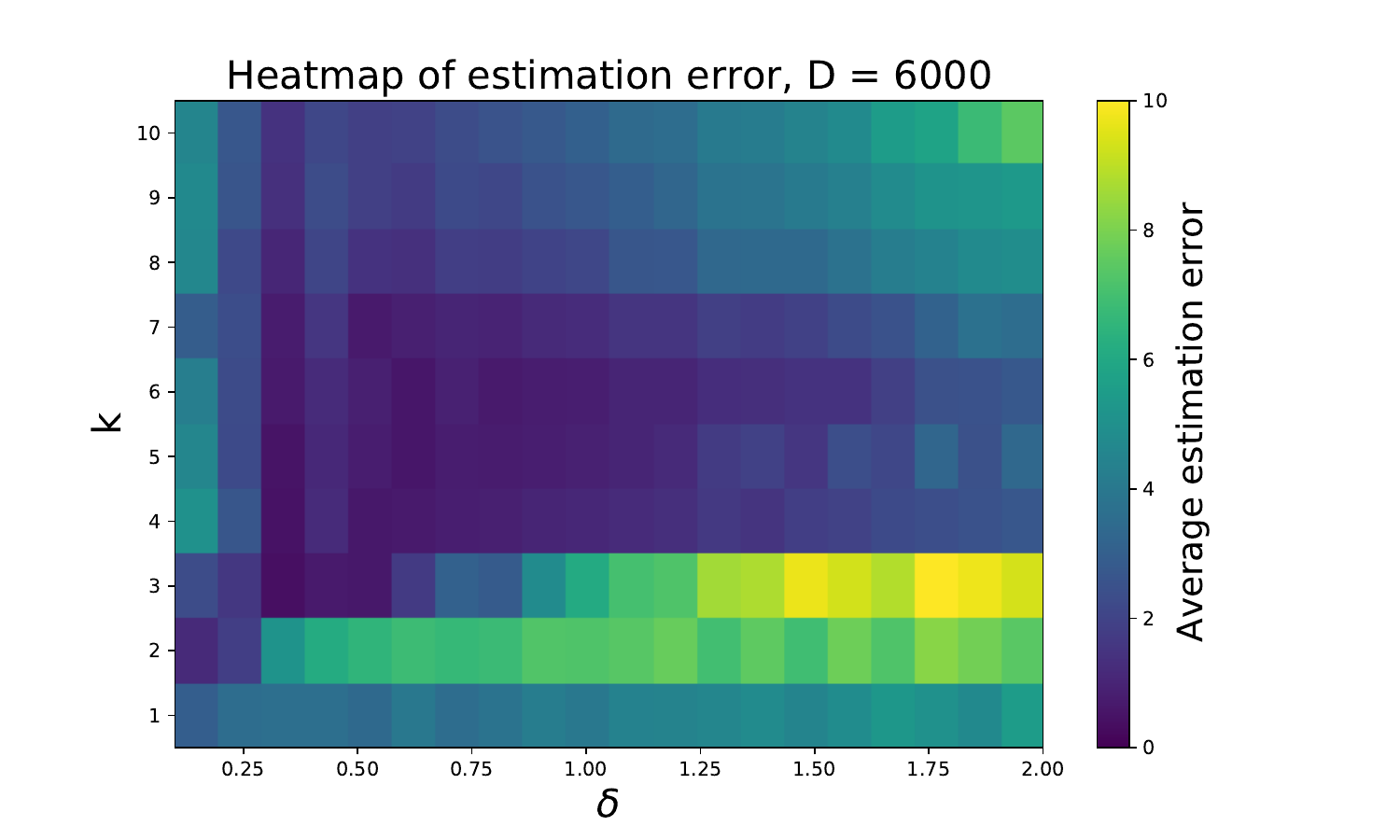}
        \caption{$(k_{\min}, \delta_{\min}) = (3, 0.3)$, estimation error = 0.42. \\
        Smallest estimation error of 1st derivative = 3.01. \\
        Smallest estimation error of 2nd derivative = 1.20.
        }
        \label{fig:si_D_6000}
    \end{subfigure}
    \begin{subfigure}[b]{0.45\textwidth}
        \centering
        \includegraphics[width=\textwidth]{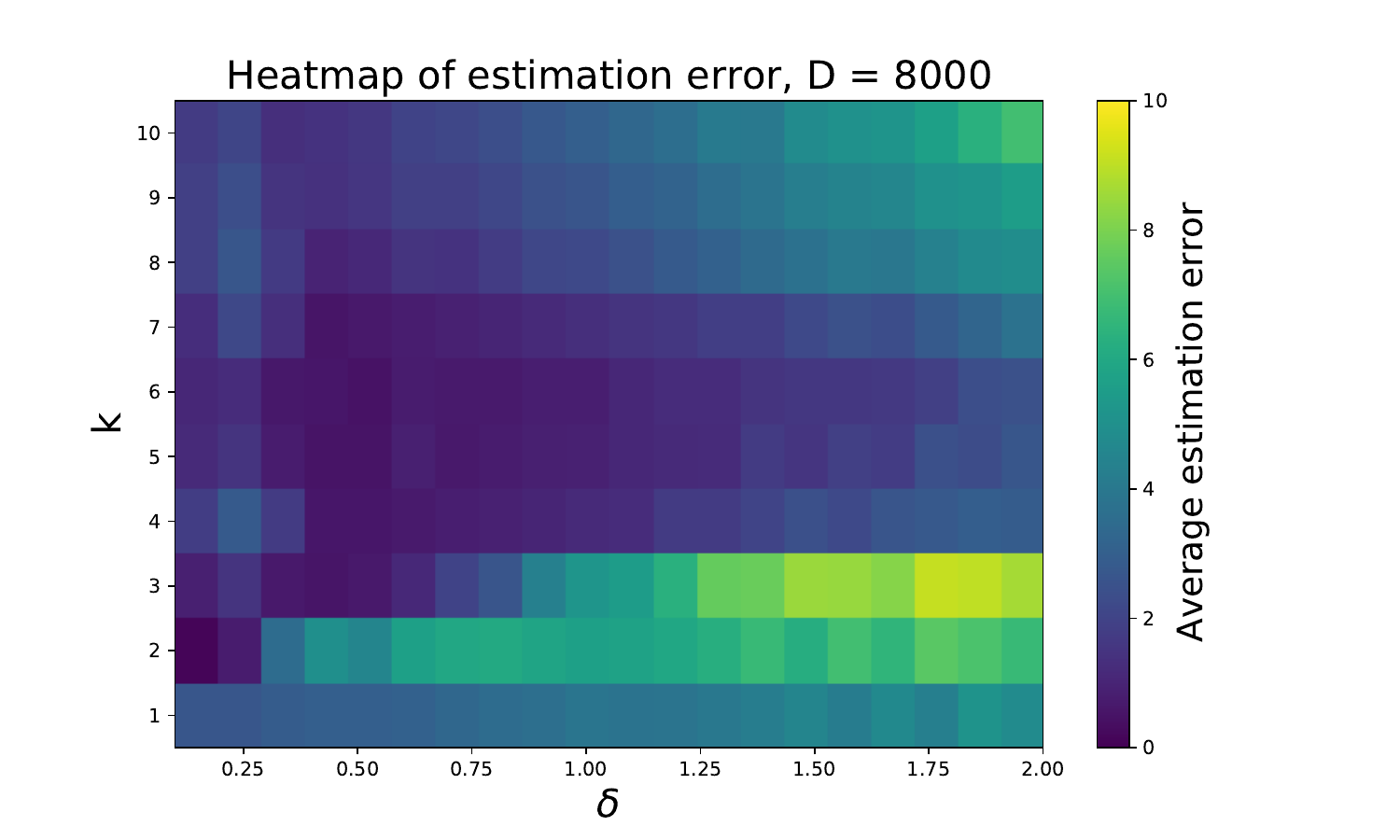}
        \caption{$(k_{\min}, \delta_{\min}) = (2, 0.1)$, estimation error = 0.14. \\
        Smallest estimation error of 1st derivative = 2.66. \\
        Smallest estimation error of 2nd derivative = 0.14.
        }
        \label{fig:si_D_8000}
    \end{subfigure}
    \caption{Heatmaps of the difference between estimated and true values of the infection time of the (unique) high-degree vertex, for $D \in \{ 2000, 4000, 6000, 8000 \}$, averaged over 200 independent simulations. In the subfigure captions above, $k_{\min}$ and $\delta_{\min}$ denote the values of $k, \delta$ that minimize the estimation error in the heatmap range.}
    \label{fig:si_heatmaps}
\end{figure}

\subsection{Detecting a super-spreader event}

A natural concrete application of our methods is detecting super-spreading events in the context of an epidemic. To be precise, we define a super-spreading event to be a scenario where a large number of individuals are suddenly exposed to (but not necessarily infected by) a pathogen. Noting that the number of infections in a population is a counting process with a time-varying rate equal to the number of exposed individuals at a given point in time (see Section~\ref{subsec:SI-results}), it is clear that a super-spreading event causes an abrupt additive shift in the rate function. 
Of course, real-time epidemic data is much more complicated than a point process model since the number of infections is not an observable process. Indeed, there are usually delays between infection events, symptom onset, and reported positive test results. 
Additional complications include partial observability (i.e., not everyone will develop symptoms or get tested) and quantization (i.e., an individual's infection time may not be known, and only aggregate metrics, like the number of daily cases, might be available). 
Nevertheless, as we show below, our methods can still detect super-spreading events in cases where their impact may appear to be obfuscated by broader trends. 
Our analysis is based on a public dataset of daily case counts of COVID-19 per county in the US, which was compiled by the New York Times from 2020 until 2023 \cite{nytimes_coronavirus}.

\begin{figure}[t]
  \centering
  \begin{subfigure}{0.45\textwidth}
    \centering
    \includegraphics[width=\linewidth]{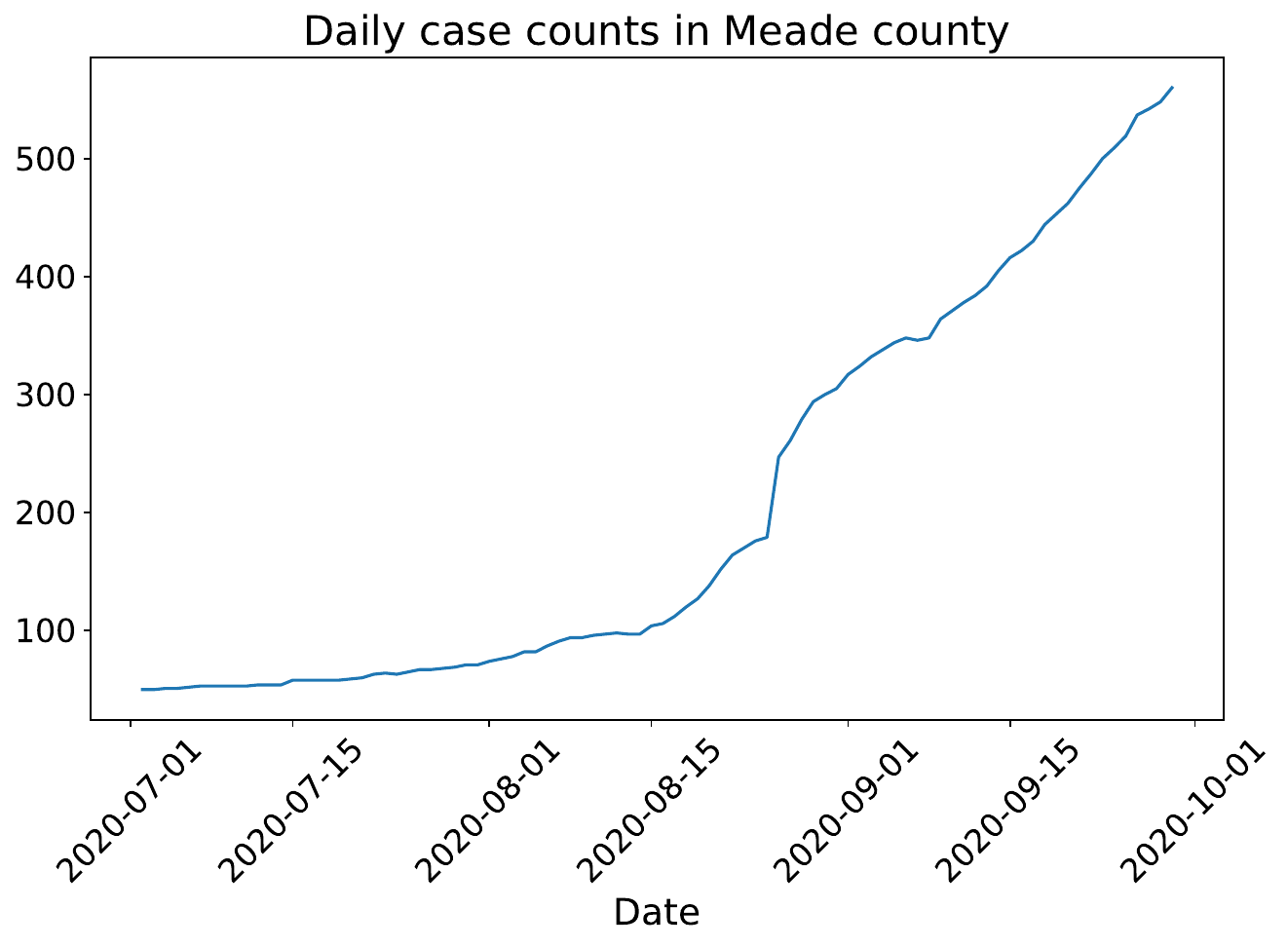}
    \caption{July 1 -- October 1, 2020.}
    \label{fig:meade1}
  \end{subfigure}
  \hfill
  \begin{subfigure}{0.45\textwidth}
    \centering
    \includegraphics[width=\linewidth]{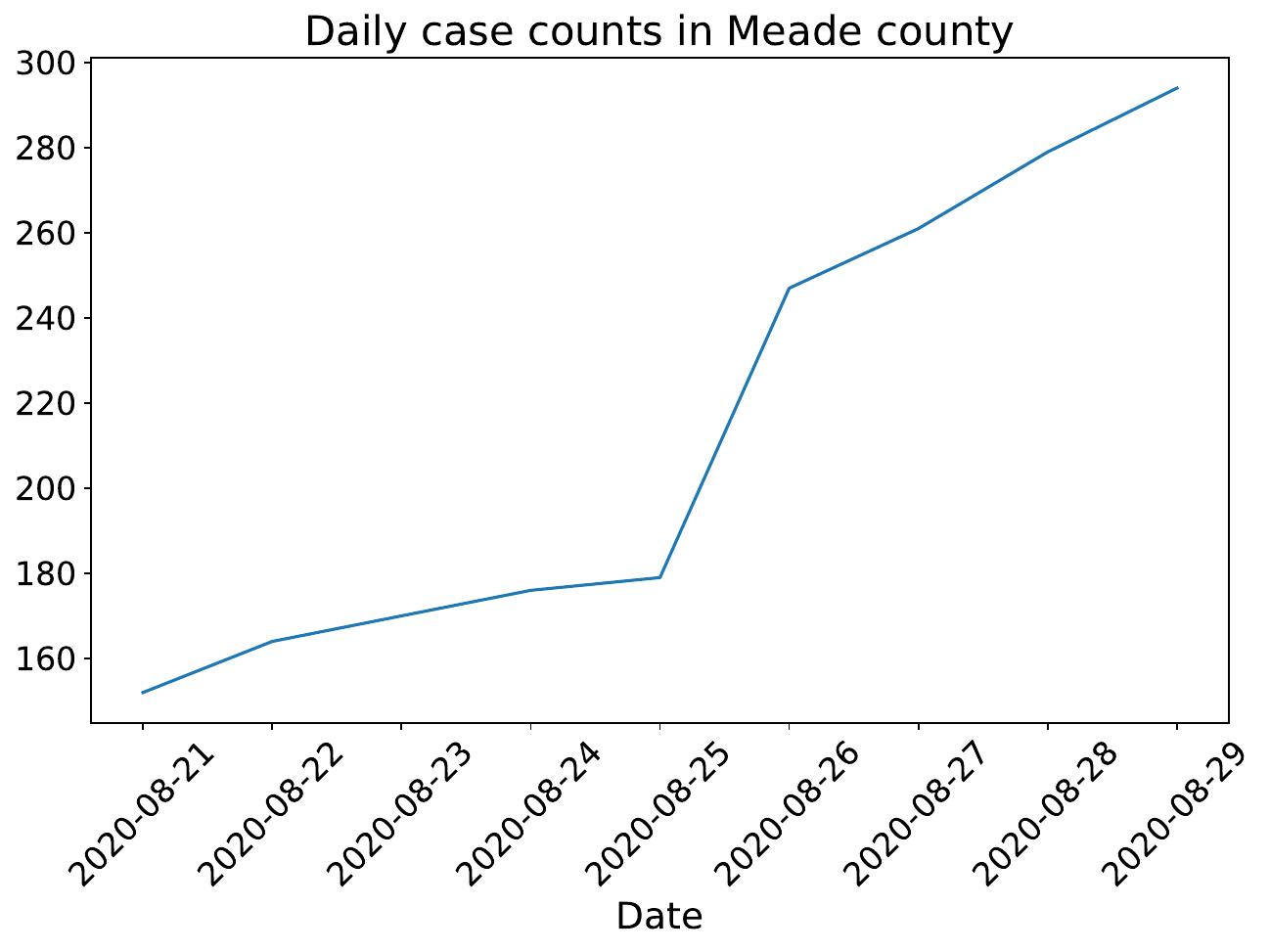}
    \caption{August 21-29, 2020.}
    \label{fig:meade2}
  \end{subfigure} \\
  \begin{subfigure}{0.45\textwidth}
    \centering
    \includegraphics[width=\linewidth]{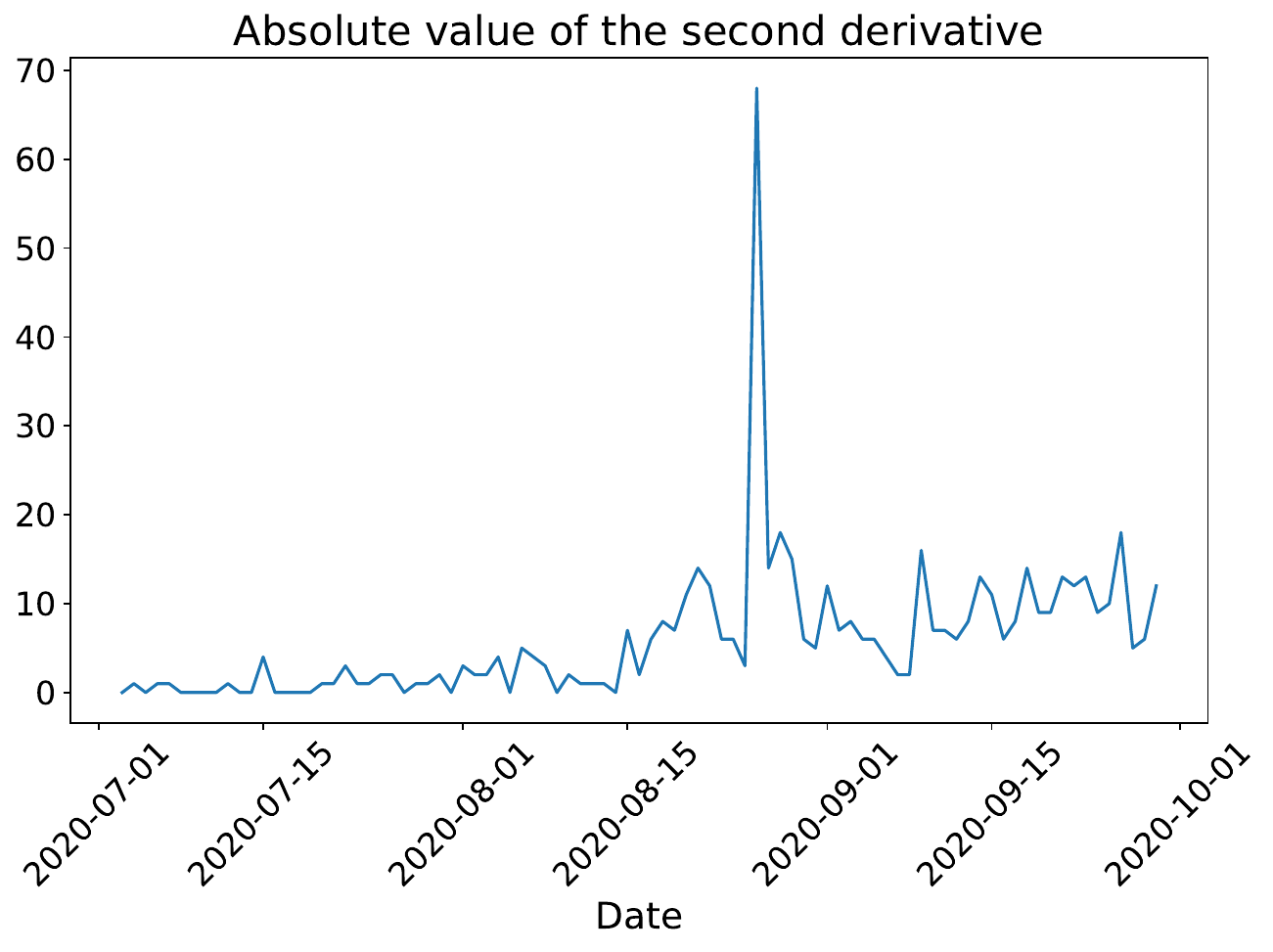}
    \caption{Second derivative, July 1 -- October 1, 2020.}
    \label{fig:meade3}
  \end{subfigure}
  \caption{Analysis of daily case counts in Meade county. Figure \ref{fig:meade1} shows a sharp increase in the daily number of cases following the rally; a closer inspection (Figure \ref{fig:meade2}) shows that the abrupt change happens over the course of a day, on August 26. This sharp change can be readily seen through the second derivative of the cumulative infection curve (Figure \ref{fig:meade3}), where the largest value occurs on August 26.}
  \label{fig:meade_county}
\end{figure}

As a specific case study, we examine the impact of the 2020 Sturgis Motorcycle Rally, a well-documented super-spreader event for COVID-19. The rally took place from August 7 - 16, 2020 in Sturgis, Meade County, South Dakota and had over 460,000 throughout the duration of the event (as a point of comparison, there were ~7,000 permanent residents of Sturgis and ~26,000 residents of Meade county at the time). No pandemic mitigation measures were strictly enforced, though masks were encouraged \cite{sturgis_superspreader}.  
While there were several documented super-spreading events during the course of the COVID-19 pandemic in 2020, we chose to study the Sturgis rally primarily for two reasons. First, the rally marked a significant increase in the rate of close interactions in a small city, making the corresponding abrupt change in the number of reported cases clear to see in daily case counts (see Figures \ref{fig:meade1} and \ref{fig:meade2}).
Second, there were no other recorded events in Meade county during the period July 1 - October 1, 2020 of a similar scale to the best of our knowledge, allowing us to effectively isolate the rally's impact (at least on the county scale). 

We now discuss the implications of our methods in the context of the rally. Let us first examine the cases in Meade county in detail. As shown by Figures \ref{fig:meade1} and \ref{fig:meade2}, the abrupt jump in cases can be seen from the daily case counts itself. This jump is more clearly identified from the plot of the second-order discrete derivative of the cumulative infection curve (which is also the first-order discrete derivative of the number of cases). As seen in Figure \ref{fig:meade3}, the peak in the second derivative occurs on August 26, 2020. We remark that this date is 10 days after the end of the rally. However, such delays are expected, due to the time it takes for symptom onset after infection, and the subsequent delay in testing for infection.

We next examined the aggregated daily case counts across all of South Dakota for the same time frame. It is worth noting that this aggregation reflects a significant increase in scale, with daily cases reaching up to ~22,000 compared to a maximum of ~500 cases per day in Meade County during the period analyzed.
As seen in Figure \ref{fig:sd1}, the impact of the rally is far less clear from daily case counts. If we examine the second derivative of the infection curve (equivalently, the first derivative of the daily case counts), we see a large value on August 26; see Figure \ref{fig:sd2}. Note that we also see several peaks of significance afterwards (even though there were no other super-spreading events of a similar magnitude in South Dakota during this time frame, to the best of our knowledge), with the largest peak slightly exceeding the value at August 26.
If we take another derivative (i.e., the third derivative of the cumulative infection curve and the second derivative of the daily case counts), we see a clear and clean spike on August 26; see Figure \ref{fig:sd3}. 
This can be explained as follows.
The first spike in the second derivative plot marks an abrupt change from low to high values. However, the subsequent spikes, while significant, appear to be a part of a broader trend of increasing values, and are also not as sharp as the first spike. Taking an additional derivative effectively zeroes out the broader trends in the background, exposing only the most abrupt change.

\begin{figure}[h]
  \centering
  \begin{subfigure}{0.45\textwidth}
    \centering
    \includegraphics[width=\textwidth]{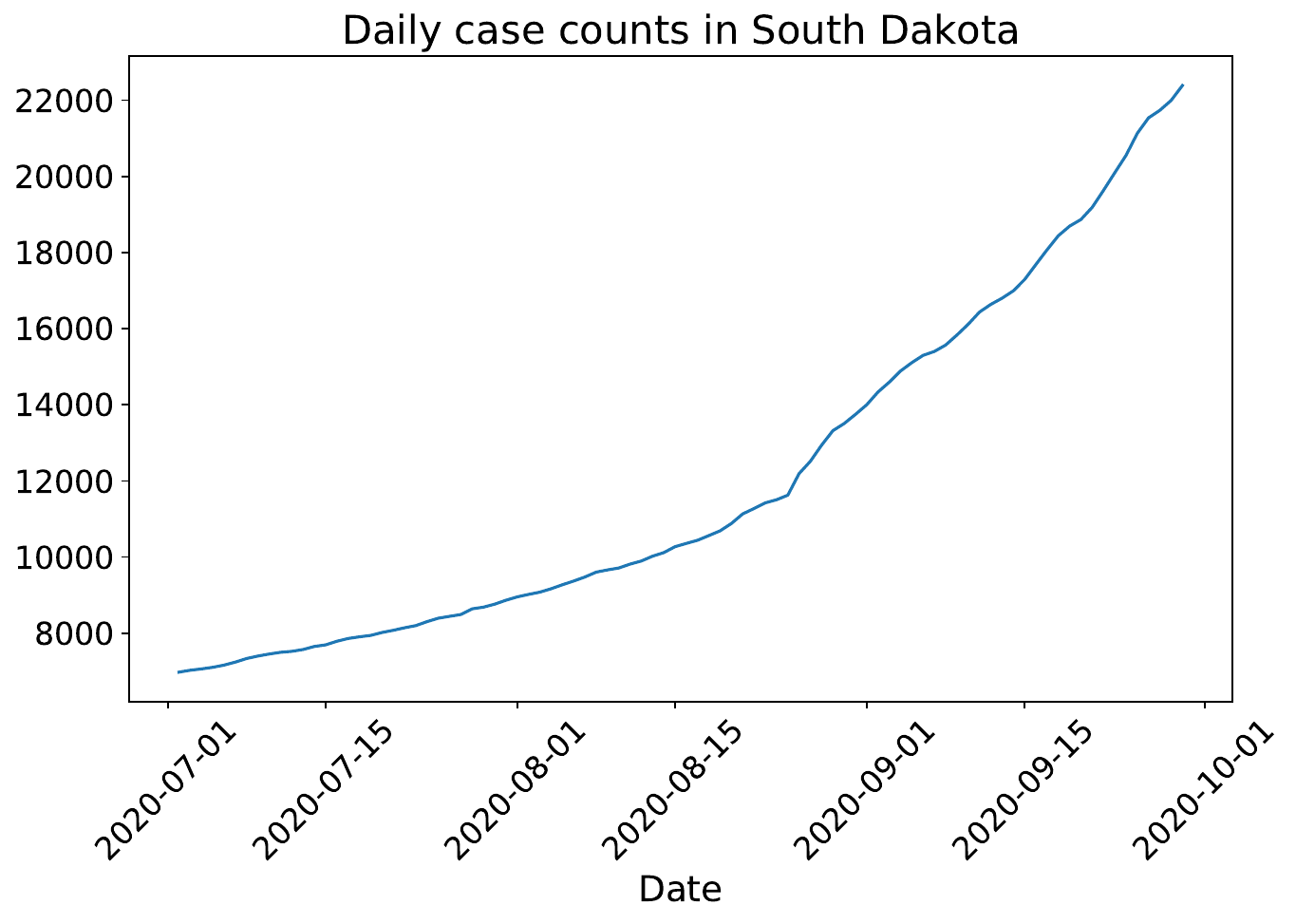}
    \caption{Plot of the first derivative.}
    \label{fig:sd1}
  \end{subfigure}
  \begin{subfigure}{0.45\textwidth}
    \centering
    \includegraphics[width=\textwidth]{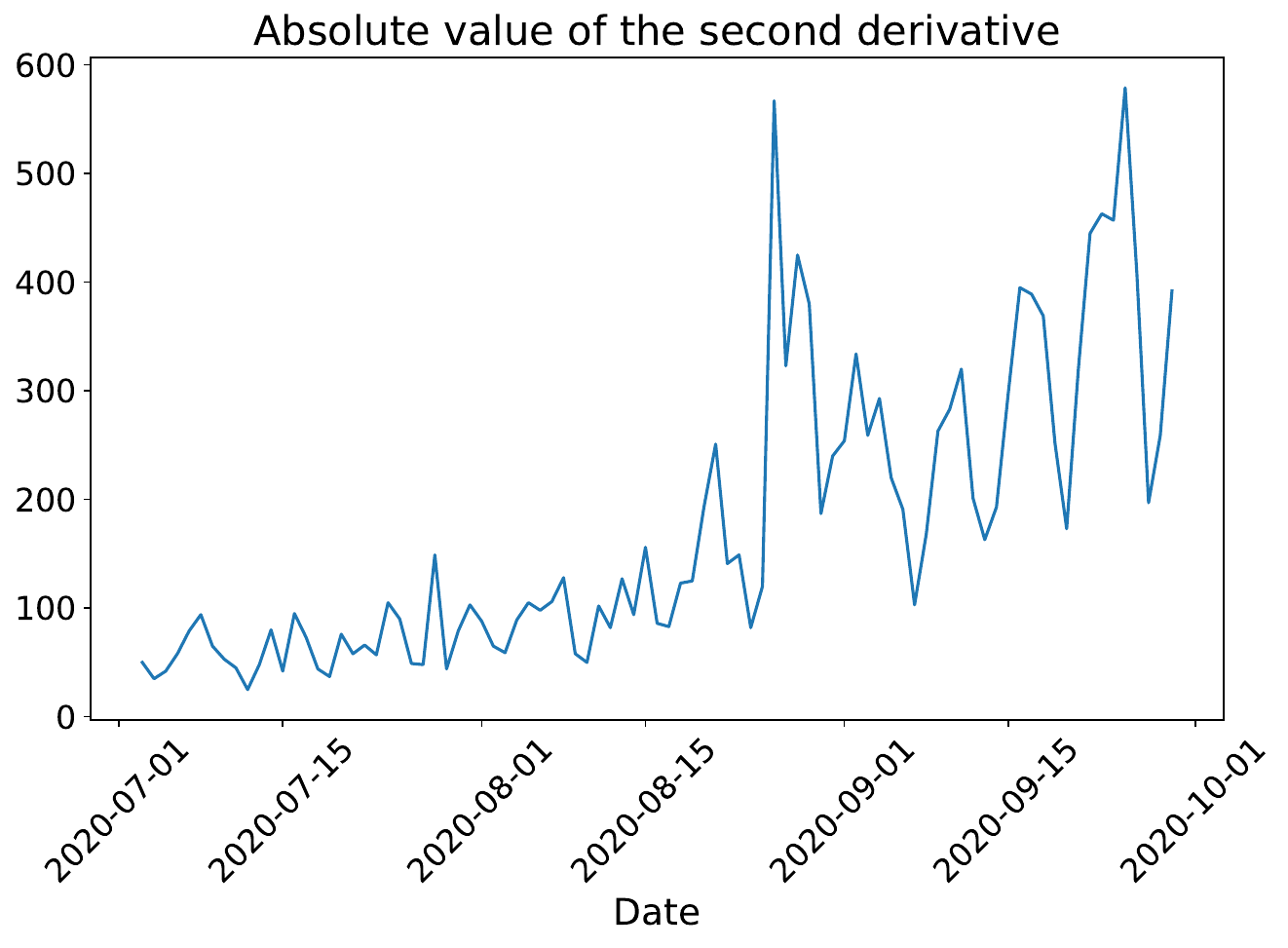}
    \caption{Plot of the second derivative.}
    \label{fig:sd2}
  \end{subfigure}
  \par\bigskip
  \begin{subfigure}{0.45\textwidth}
    \centering
    \includegraphics[width=\textwidth]{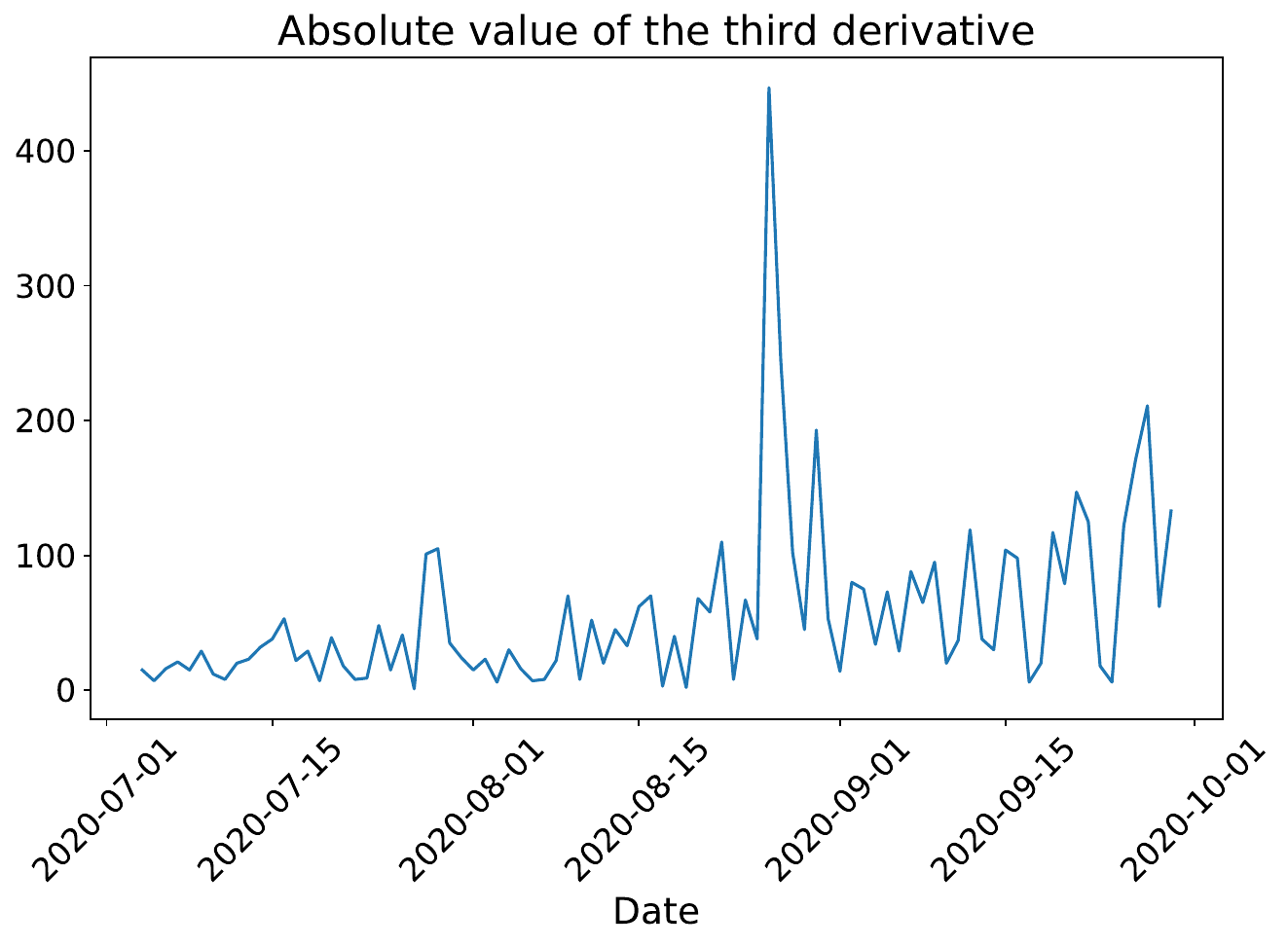}
    \caption{Plot of the third derivative.}
    \label{fig:sd3}
  \end{subfigure}
  \caption{Analysis of COVID-19 case counts aggregated across all of South Dakota, from July 1 - October 1, 2020. While the discontinuity at August 26 can still be seen in the daily case counts (Figure \ref{fig:sd1}), the impact is less clear compared to Figure \ref{fig:meade1}. While a spike in the second derivative can be observed on August 26, there are other (potentially spurious) large values, notably at the end of September 2020. The third derivative (Figure \ref{fig:sd3}) shows that the event on August 26 marks the most abrupt change.}
  \label{fig:your_label}
\end{figure}

\section{Polynomial approximation: Proof of Theorem \ref{thm:approximation_error}}
\label{sec:polynomial_approximation}

In this section, we prove Theorem \ref{thm:approximation_error}. 
Our first result is a probabilistic upper bound on the approximation errors $|N(s) - \overline{N}_\ell(s, t) |$ and $| N(t) - \overline{N}_\ell(t, s) |$ for a \emph{fixed} $s,t \in [0,T]$.

\begin{lem}
\label{lemma:polynomial_approximation_fixed_t}
Let $t, \delta, x > 0$, let $s \in (t, t + 2\delta)$, and fix an integer $\ell \ge 1$. 
Define the event
\[
\cA_{t, s}(x) : = \left \{ \max \left \{  |N(s) - \overline{N}_\ell(s, t) |,  |N(t) - \overline{N}_\ell(t, s ) | \right \} \ge (2\delta)^{\ell + 1} S_{\ell + 1} + x \right \}.
\]
Then \[
\p ( \cA_{t,s}(x) \cap \{ \cT \cap (t, s] = \emptyset \} ) \le 2 (\ell + 1) \exp \left( - \frac{x^2 / 2}{6 J_{\le \ell}^2 S_{1} \delta + J_{\le \ell} x / 3} \right).
\]
\end{lem}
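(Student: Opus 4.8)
\emph{Proof plan.} The plan is to write each approximation error as a bounded ``Taylor remainder'' plus a martingale term, and then to control the martingale by a Bernstein/Freedman-type concentration inequality for martingales. Fix $t < s$ with $s \in (t, t+2\delta)$ and set $\sigma := \inf\{ u > t : u \in \cT\}$, so that $\{\cT \cap (t,s] = \emptyset\}$ is the event $\{\sigma > s\}$. By the very definition of the modified conditional derivatives, for each $k \ge 0$ the process
\[
\widetilde{M}^{(k)}(u) := \int_t^{u} \mathbf{1}(x \notin \cT)\, d\lambda^{(k)}(x) - \int_t^u \lambda^{(k+1)}(x)\, dx , \qquad u \ge t,
\]
is a local martingale with $\widetilde{M}^{(k)}(t) = 0$, and stopping at $\sigma$ makes $\widetilde{M}^{(k)}(\cdot \wedge \sigma)$ a (square-integrable) martingale on the bounded interval $[t,s]$, since there the rate is at most $S_1$ and the jumps are at most $J_{\le \ell}$. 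On $\{\sigma > s\}$ one has $\int_t^u \mathbf{1}(x \notin \cT)\, d\lambda^{(k)}(x) = \lambda^{(k)}(u) - \lambda^{(k)}(t)$ for all $u \in [t,s]$, so that
\[
\lambda^{(k)}(u) = \lambda^{(k)}(t) + \int_t^u \lambda^{(k+1)}(x)\,dx + \widetilde{M}^{(k)}(u), \qquad u \in [t,s].
\]
(The finitely many deterministic jump times of $\cD$ in $(t,s]$ contribute only a bounded predictable correction; in the main case $\cD \cap (t,s] = \emptyset$ there is nothing to check, and the remaining windows are treated separately in the proof of Theorem~\ref{thm:approximation_error}.)

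Substituting this identity for $\lambda^{(1)}$ into $N(s) - N(t) = \int_t^s \lambda^{(1)}(x)\,dx + \widetilde{M}^{(0)}(s)$, then for $\lambda^{(2)}$ into the resulting remainder, and iterating $\ell$ times — each step followed by a Fubini rearrangement of the nested integrals — one obtains by induction on $\ell$ the stochastic Taylor identity, valid on $\{\sigma > s\}$,
\[
N(s) - \overline{N}_\ell(s,t) = \int_t^s \frac{(s-x)^\ell}{\ell!}\, \lambda^{(\ell+1)}(x)\, dx \;+\; \sum_{k=0}^{\ell} \int_t^s \frac{(s-x)^k}{k!}\, d\widetilde{M}^{(k)}(x).
\]
Since the $\widetilde{M}^{(k)}$ all jump at the event times of $N$ and in fact $d\widetilde{M}^{(k)}(x) = \beta^{(k)}(x)\, d\widetilde{M}^{(0)}(x)$ (where $\beta^{(k)}$ is the jump coefficient from Assumption~\ref{as:lambda_k} and $\beta^{(0)} \equiv 1$), the martingale term collapses to a single stochastic integral $\int_t^s H_s(x)\, d\widetilde{M}^{(0)}(x)$ with $H_s(x) = \sum_{k=0}^\ell \tfrac{(s-x)^k}{k!}\,\beta^{(k)}(x)$. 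The analogous identity holds for $N(t) - \overline{N}_\ell(t,s)$, with coefficients $\pm(x-t)^k/k!$.

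It remains to bound the two pieces. For the remainder, $\sup_{[0,T]}|\lambda^{(\ell+1)}| \le S_{\ell+1}$ (Assumption~\ref{as:regularity}) and $|s-t| < 2\delta$ give $\big|\int_t^s \tfrac{(s-x)^\ell}{\ell!}\lambda^{(\ell+1)}(x)\,dx\big| \le \tfrac{(2\delta)^{\ell+1}}{(\ell+1)!}S_{\ell+1} \le (2\delta)^{\ell+1} S_{\ell+1}$, which is exactly the deterministic offset appearing in $\cA_{t,s}(x)$; hence on $\cA_{t,s}(x) \cap \{\sigma > s\}$ one of the two martingale terms (stopped at $s \wedge \sigma$) must be large. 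Each such martingale has jumps bounded by $\|H_{\cdot}\|_\infty \lesssim J_{\le\ell}$ on $(t, s\wedge\sigma] \subseteq (t,s] \setminus \cT$ (using $\sup_{u \notin \cT}|\lambda^{(k)}(u) - \lambda^{(k)}(u^-)| \le J_k \le J_{\le\ell}$, $|\beta^{(0)}| = 1$, and $|s-x|,|x-t| < 2\delta$), and predictable quadratic variation at most $\|H_\cdot\|_\infty^2 \int_t^s \lambda^{(1)}(x)\,dx \lesssim J_{\le\ell}^2 S_1 \delta$ (using $\sup|\lambda^{(1)}| \le S_1$). Applying a two-sided Bernstein/Freedman inequality for martingales — either directly to the combined stochastic integral, or to its $\ell+1$ summands with a union bound — and tracking constants over the forward and backward cases yields
\[
\p\big(\cA_{t,s}(x) \cap \{\cT \cap (t,s] = \emptyset\}\big) \le 2(\ell+1)\exp\!\left( - \frac{x^2/2}{6 J_{\le\ell}^2 S_1 \delta + J_{\le\ell} x / 3}\right),
\]
which is the claim.

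The main obstacle is making the first two paragraphs fully rigorous: justifying the iterated compensator identities while keeping careful track of the $\mathbf{1}(x \notin \cT)$ truncation and the stopping at $\sigma$ (so that the relevant processes are genuine martingales on the event of interest), handling the deterministic jump times in $\cD$, and verifying the Fubini rearrangements for stochastic integrals. Once the stochastic Taylor decomposition and the identity $d\widetilde M^{(k)} = \beta^{(k)}\,d\widetilde M^{(0)}$ are in hand, bounding the remainder is immediate and the martingale concentration step is standard.
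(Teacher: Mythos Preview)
Your approach is correct and close in spirit to the paper's, but the paper takes a somewhat more elementary route that avoids the stochastic Fubini step and the collapsing identity $d\widetilde M^{(k)}=\beta^{(k)}\,d\widetilde M^{(0)}$. Instead of an exact stochastic Taylor identity, the paper proves a purely deterministic inequality (Lemma~\ref{lemma:poly_approx_deterministic}) by induction on $\ell$: with $X_k(r):=\lambda^{(k)}(r)-\lambda^{(k)}(t)-\int_t^r\lambda^{(k+1)}$, one has
\[
\max\bigl\{|N(s)-\overline N_\ell(s,t)|,\;|N(t)-\overline N_\ell(t,s)|\bigr\}\ \le\ \delta^{\ell+1}S_{\ell+1}+\sum_{k=0}^\ell \delta^{k}\sup_{r\in(t,s]}|X_k(r)|,
\]
which handles the forward and backward directions simultaneously through the same suprema. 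On $\{\cT\cap(t,s]=\emptyset\}$ each $X_k$ coincides with your martingale $\widetilde M^{(k)}$, so a union bound over $k$ together with two-sided Freedman on each $\sup_r|X_k(r)|$ (Lemma~\ref{lemma:X_concentration}) gives exactly the $2(\ell+1)$ factor. Your route buys an exact decomposition and, via the collapsing identity, potentially a single martingale term---a nice observation, though it needs the additional verification that the continuous finite-variation part of $\widetilde M^{(k)}-\int\beta^{(k)}\,d\widetilde M^{(0)}$ vanishes (true when $\cD\cap(t,s]=\emptyset$, since a continuous bounded-variation martingale is constant). The paper's route trades this elegance for simplicity: no stochastic integration-by-parts or Fubini is needed, and the forward/backward cases do not have to be treated separately, which is what makes the constant $2(\ell+1)$ fall out directly rather than $4(\ell+1)$.
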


The lemma is proved by bounding the error processes by the maximal deviation of certain martingales. The probability bound on the right hand side comes from standard martingale concentration results (i.e., Freedman's inequality \cite[Appendix B]{shorack_wellner}). Full details of the proof can be found in Section \ref{subsec:polynomial_approximation}.

Next, we translate our results for a \emph{fixed} $t,s$ to a simultaneous bound for \emph{all} $t,s \in [0,T]$ through a discretization argument (similar to the methods in \cite[Appendix B]{mossel2024finding}).
For a given $\varepsilon > 0$ (to be determined), we define the set of discretized time indices
\[
\mathbb{T} : = [0,T] \cap \{ k \varepsilon : k \in \mathbb{Z}_{\ge 0} \}. 
\]
Our next result shows that for any $t \in [0,T]$, the error processes can be essentially understood by examining only $t \in \mathbb{T}$, for $\varepsilon$ sufficiently small. We defer the proof details to Section \ref{subsec:discretization}.

\begin{lem}
\label{lemma:discretization}
Fix $T > 0, \delta \in (0, 1/2)$ and $\eta > 0$. Additionally, define
\begin{equation}
\label{eq:discretization_epsilon}
\varepsilon = \frac{\eta ( \sep( \cD) \land 1)}{6 TR^2 (L_{\le \ell} + S_{\le \ell})}.
\end{equation}
With probability at least $1 - \eta$, it holds for every $t \in [0,T]$ and $s \in (t, t + \delta)$ that there exist $t' \in \mathbb{T}$ and $s' \in \mathbb{T} \cap (t', t' + 2\delta)$ satisfying 
\begin{align}
\label{eq:discretization_bound_1}
 |N(s) - \overline{N}_\ell(s,t) | & \le  |N(s') - \overline{N}_\ell(s',t') | + 1, \\
\label{eq:discretization_bound_2}
 | N(t) - \overline{N}_\ell(t, s) | & \le |N(t') - \overline{N}_\ell(t',s') | + 1.
\end{align}
\end{lem}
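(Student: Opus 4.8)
\emph{The plan} is to pass from the bound for a fixed pair $(t,s)$ (Lemma~\ref{lemma:polynomial_approximation_fixed_t}) to a uniform bound by snapping $t$ and $s$ to nearby grid points, doing so on a high-probability event on which the ``jump times'' of all processes involved are spread out at scale $\varepsilon$. \emph{Step 1 (the good event).} Write $\mathcal{X}$ for the set of event times of $N$ together with the points of $\cD$; these are exactly the times at which $N$ or any $\lambda^{(k)}$ may jump. Let $\Edisc$ be the event that any two points of $\mathcal{X}$ are more than $2\varepsilon$ apart and that no point of $\mathcal{X}$ equals a grid point $k\varepsilon$ nor $0$ nor $T$. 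The requirement on $\cD$ alone is automatic since $\varepsilon<\tfrac12\sep(\cD)$ by \eqref{eq:discretization_epsilon}; the content concerns the event times of $N$. Since $\sup_{[0,T]}\Lambda\le R$ almost surely, the number of events of $N$ in any window of length $O(\varepsilon)$ is stochastically dominated by a Poisson variable with mean $O(\varepsilon R)$, so a union bound over the $O(T/\varepsilon)$ windows of the $\varepsilon$-grid (and over the $O(1+T/\sep(\cD))$ points of $\cD$) gives $\p(\Edisc^{c})\lesssim T\varepsilon R^{2}+(1+T/\sep(\cD))\varepsilon R$, which is at most $\eta$ for $\varepsilon$ as in \eqref{eq:discretization_epsilon}. (Points of $\mathcal{X}$ landing exactly on a grid point, $0$, or $T$ have probability $0$ if the compensator of $N$ is continuous, and may otherwise be excluded by an infinitesimal generic shift of the grid.) This estimate is exactly what dictates the factors $TR^{2}$ and $\sep(\cD)$ in \eqref{eq:discretization_epsilon}.

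\emph{Step 2 (snapping to the grid).} Work on $\Edisc$. The points of $\mathcal{X}$ cut $[0,T]$ into maximal open ``chunks'' disjoint from $\mathcal{X}$. On $\Edisc$, each interior chunk has length $>2\varepsilon$ (hence contains at least two grid points), the two boundary chunks contain the grid points $0$ and $T$, and no chunk endpoint other than $0,T$ is a grid point. Thus for any $u\in[0,T]$ the chunk containing $u$ (choosing the chunk to the right of $u$ if $u\in\mathcal{X}$) contains a grid point $\pi(u)$ with $|\pi(u)-u|<2\varepsilon$. Because $N$ is constant on any segment disjoint from $\mathcal{X}$ and, on such a segment, $\lambda^{(k)}$ evolves only through its Lipschitz part (the $\alpha^{(k)}$ term of Assumption~\ref{as:lambda_k}), this gives $N(\pi(u))=N(u)$ and $|\lambda^{(k)}(u)-\lambda^{(k)}(\pi(u))|\le 2L_{k}\varepsilon$ for every $k\le\ell$. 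Applying this to $u=t$ and $u=s$ — and, in the one case where $\pi(t)<\pi(s)$ fails (which can happen only if $t$ and $s$ lie in the same chunk, and is repaired by replacing one of them by an adjacent grid point of that chunk) — we obtain $t',s'\in\mathbb{T}$ with $t'<s'$, with $s'-t'\le(s-t)+O(\varepsilon)<2\delta$ (using $\varepsilon\ll\delta$, as holds in every regime where the lemma is applied), and with $N(s)-N(t)=N(s')-N(t')$, the last because the symmetric difference of $(t,s]$ and $(t',s']$ is covered by two half-open intervals of length $<2\varepsilon$ on which $N$ is constant.

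\emph{Step 3 (the polynomial estimate and conclusion).} From $N(s)-N(t)=N(s')-N(t')$ and the definition of $\overline{N}_\ell$,
\[
\bigl(N(s)-\overline{N}_\ell(s,t)\bigr)-\bigl(N(s')-\overline{N}_\ell(s',t')\bigr)
=\sum_{k=1}^{\ell}\frac{1}{k!}\bigl(\lambda^{(k)}(t')(s'-t')^{k}-\lambda^{(k)}(t)(s-t)^{k}\bigr).
\]
Estimating the $k$-th term by $|\lambda^{(k)}(t)-\lambda^{(k)}(t')|\,|s-t|^{k}+S_{k}\,\bigl|(s-t)^{k}-(s'-t')^{k}\bigr|$ and using $|s-t|,|s'-t'|<2\delta<1$, $|\lambda^{(k)}(t)-\lambda^{(k)}(t')|\le 2L_{\le\ell}\varepsilon$ and $\bigl|(s-t)^{k}-(s'-t')^{k}\bigr|\le k(2\delta)^{k-1}\cdot O(\varepsilon)$, the whole sum is at most $C\varepsilon(L_{\le\ell}+S_{\le\ell})$ for an absolute constant $C$, which is $\le 1$ by \eqref{eq:discretization_epsilon}; this is \eqref{eq:discretization_bound_1}. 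Inequality \eqref{eq:discretization_bound_2} follows from the identical estimate with the polynomial centers swapped: the difference of $N(t)-\overline{N}_\ell(t,s)$ and $N(t')-\overline{N}_\ell(t',s')$ equals $\sum_{k}\tfrac{1}{k!}\bigl(\lambda^{(k)}(s')(t'-s')^{k}-\lambda^{(k)}(s)(t-s)^{k}\bigr)$, bounded in the same way using $|\lambda^{(k)}(s)-\lambda^{(k)}(s')|\le 2L_{\le\ell}\varepsilon$.

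\emph{Main obstacle.} The probabilistic input (Step 1) is a routine Poisson-domination union bound; the real work is the deterministic bookkeeping of Step 2 — showing that a single global grid resolution $\varepsilon$ lets us snap both $t$ and $s$ to grid points that (a) avoid crossing any jump of $N$ or of the coefficient processes $\lambda^{(k)}$, (b) preserve the number of events in the interval, and (c) still satisfy $t'<s'$ and $s'-t'<2\delta$. The only genuinely delicate configuration is when $s-t$ is comparable to $\varepsilon$ (so $t,s$ share, or straddle the boundary of, a chunk); this is dispatched by the ``move to an adjacent grid point'' fix, at the cost of a short finite case check.
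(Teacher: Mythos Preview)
Your proposal is correct and follows essentially the same route as the paper: define a high-probability event on which the jumps of $N$ and the points of $\cD$ are $\varepsilon$-separated, snap $t$ and $s$ to nearby grid points without crossing any jump (so that $N$ is unchanged and each $\lambda^{(k)}$ moves only via its Lipschitz part $\alpha^{(k)}$), and then bound the difference of the two polynomials termwise using $|\lambda^{(k)}|\le S_k$ and the Lipschitz bound $L_k$. The paper phrases the good event as ``at most one jump of $N$ or $\cD$ per grid cell of width $\varepsilon$'' rather than your ``all jump times $2\varepsilon$-separated'', and selects $t'$ as one of the two grid endpoints adjacent to $t$ depending on which side of the (at most one) jump $t$ falls, but the substance and the arithmetic are identical; if anything, your handling of the edge case where $t$ and $s$ lie in the same chunk (and of the requirement $t'<s'<t'+2\delta$) is more explicit than the paper's.
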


Together, Lemmas \ref{lemma:polynomial_approximation_fixed_t} and \ref{lemma:discretization} prove the theorem.

\begin{proof}[Proof of Theorem \ref{thm:approximation_error}]

Let $\eta > 0$ and let $\varepsilon$ be given by \eqref{eq:discretization_epsilon}. We claim that if $x = c_1' J_{\le \ell} \sqrt{ \delta S_1 \log \left( \frac{T}{\varepsilon \eta} \right) }$ for some constant $c_1' = c_1'(\ell) >0$, then 
\begin{equation}
\label{eq:Eapprox_inclusion}
\bigcap_{t \in [0,T], s \in (t, t + \delta)} \left( \cA_{t,s}(x) \cap \{ \cT \cap (t, s] = \emptyset \} \right)^c \subseteq \Eapprox,
\end{equation}
with $c_1 = \max \{ 2^{\ell + 1}, c_1' \}$ in the definition of $\err$ in \eqref{eq:error_bound}.
Indeed, if the event on the left hand side of \eqref{eq:Eapprox_inclusion} holds, then Item \ref{item:forward_prediction} of Theorem \ref{thm:approximation_error} is immediate. Additionally, if $\cT \cap (t - \delta, t) = \emptyset$ and $t - \delta < s < r< t$, then $|N(s) - \overline{N}_\ell(s,r) | \le (2 \delta)^{\ell + 1} S_{\le \ell + 1} + x$.
Since the $\lambda^{(k)}$'s are \cadlag by Assumption \ref{as:lambda_k}, the process $\overline{N}_\ell(s, r)$ is also c\`{a}dl\`{a}g in $r$. Sending $r \to t^-$, we obtain $| N(s) - \overline{N}_\ell(s, t^-) | \le (2 \delta)^{\ell + 1} S_{\le \ell + 1} + x \le \delta \err$, hence Item \ref{item:backward_prediction} of Theorem \ref{thm:approximation_error} holds.

We proceed by upper bounding the probability of the complement of the left hand side of \eqref{eq:Eapprox_inclusion}, showing in particular that there exists $c_1'$ such that probability of the aforementioned event is at most $2 \eta$.
Let $\Edisc$ denote the event described in the statement of Lemma \ref{lemma:discretization}. 
Then we can bound
\begin{align}
& \p \left( \Edisc \cap \bigcup_{t \in [0,T], s \in (t, t + \delta)} \cA_{t,s} \left(\frac{x}{2} + 1 \right) \cap \{ \cT \cap (t, s] = \emptyset \}  \right) \nonumber \\
& \hspace{4cm}  \stackrel{(a)}{\le} \p \left(  \bigcup_{t,s \in \mathbb{T}: s \in (t, t +  2\delta)} \cA_{t,s}\left( \frac{x}{2} \right) \cap \{ \cT \cap (t, s] = \emptyset \} \right) \nonumber \\
& \hspace{4cm} \le \sum_{t,s \in \mathbb{T}: s \in (t, t +  2\delta)} \p ( \cA_{t,s} \left( \frac{x}{2} \right) \cap \{ \cT \cap (t, s] = \emptyset \} ) \nonumber \\
\label{eq:At_union_bound}
& \hspace{4cm} \stackrel{(b)}{\le} \frac{2 (\ell + 1) T^2}{\varepsilon^2} \exp \left( - \frac{x^2 / 8}{ 6J_{\le \ell}^2 S_{1} \delta + J_{\le \ell} x / 6}\right).
\end{align}
Above, $(a)$ is due to Lemma \ref{lemma:discretization} and $(b)$ follows from Lemma \ref{lemma:polynomial_approximation_fixed_t} as well as the bound $| \mathbb{T} | \le T / \varepsilon$.
To simplify the right hand side of \eqref{eq:At_union_bound}, assume that 
\[
\delta \ge \frac{4 \ell}{S_1} \log \left( \frac{T}{\varepsilon \eta} \right) \text{ and } x :=   \sqrt{ 120 \ell \delta J_{\le \ell}^2 S_{1} \log \left( \frac{T}{\varepsilon \eta} \right)} .
\]
It can be seen that with this choice of $x$ and $\delta$ satisfying the bound \eqref{eq:delta_lower_bound_generic}, the right hand side of \eqref{eq:At_union_bound} is at most $\eta$. 
It is also useful to note that for $\varepsilon \le \eta / T$ and $J_{\le \ell} \ge 1$, we have that $x \ge 2$.
Using the inequality $x/2 + 1 \le x$ as well as the result $\p ( \Edisc) \ge 1- \eta$ from Lemma \ref{lemma:discretization} shows that
\begin{multline*}
\p \left( \bigcup_{t \in [0,T], s \in (t, t + \delta)} \cA_{t,s}(x) \cap \{ \cT \cap (t,s] = \emptyset \}  \right) \\
\le \p ( \Edisc^c) + 
\p \left( \Edisc \cap \bigcup_{t \in [0,T], s \in (t, t + \delta)} \cA_{t,s} \left( \frac{x}{2} + 1 \right) \cap \{ \cT \cap (t,s] = \emptyset \} \right)  \le 2\eta. 
\end{multline*}
Finally, the theorem follows from setting $\eta = 1/S_{\le \ell}$. 
\end{proof}

\subsection{Bounding errors for a fixed $t$: Proof of Lemma \ref{lemma:polynomial_approximation_fixed_t}}
\label{subsec:polynomial_approximation}

Define the following stochastic process for $k \ge 0$:
\[
X_k(s) : = \lambda^{(k)}(s) - \lambda^{(k)}(t) - \int_t^s \lambda^{(k + 1)}(x) dx.
\] 
Our first supporting result shows that the approximation error between the processes $N(s)$ and $\overline{N}_\ell(s, t)$ can be bounded by the maximal fluctuations of the $X_k$'s. We remark that the result below is fully deterministic.

\begin{lem}
\label{lemma:poly_approx_deterministic}
Let $t, \delta > 0$, let $\ell \ge 1$ be an integer and let $s \in (t, t + \delta)$. Then
\begin{align*}
& \max \left \{ | N(s) - \overline{N}_\ell( s, t) |  , |N(t) - \overline{N}_\ell(t, s) | \right \} \le \delta^{\ell + 1} S_{\ell + 1} + \sum_{k = 0}^\ell \delta^k \sup_{r \in (t, s]} |X_k(r) |. 
\end{align*}
\end{lem}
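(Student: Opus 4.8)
The plan is to prove a càdlàg ``Taylor expansion with remainder'' for $N$, obtained by sliding the base point of the local polynomial, and then to bound the remainder together with the error contributions of the $X_k$'s by elementary estimates on the volumes of simplices. The starting observation is that, by the very definition of $X_k$, one has (as signed measures on $[t,s]$)
\[
d\lambda^{(k)}(r)=\lambda^{(k+1)}(r)\,dr+dX_k(r), \qquad k=0,1,\dots,\ell,
\]
so that the jumps of $\lambda^{(k)}$ are exactly those of $X_k$, and $X_k(t)=0$.

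For the forward term, introduce for $r\in[t,s]$ the sliding-center polynomial $h(r):=\sum_{k=0}^{\ell}\frac{\lambda^{(k)}(r)}{k!}(s-r)^{k}$, so that $h(s)=\lambda^{(0)}(s)=N(s)$ and $h(t)=\overline{N}_\ell(s,t)$; hence $N(s)-\overline{N}_\ell(s,t)=\int_{(t,s]}dh(r)$. Applying the Leibniz rule to each product $\frac{(s-r)^k}{k!}\lambda^{(k)}(r)$ (legitimate since $(s-r)^k/k!$ is smooth and $\lambda^{(k)}$ is BV) and substituting the relation above, the polynomial cross-terms telescope, leaving
\[
dh(r)=\frac{\lambda^{(\ell+1)}(r)}{\ell!}(s-r)^{\ell}\,dr+\sum_{k=0}^{\ell}\frac{(s-r)^{k}}{k!}\,dX_k(r).
\]
Integrating over $(t,s]$: the first term has absolute value at most $S_{\ell+1}(s-t)^{\ell+1}/(\ell+1)!\le\delta^{\ell+1}S_{\ell+1}$, using $|\lambda^{(\ell+1)}|\le S_{\ell+1}$ from Assumption~\ref{as:regularity} and $s-t<\delta$; the $k=0$ term integrates to $X_0(s)-X_0(t)=X_0(s)$, with $|X_0(s)|\le\sup_{r\in(t,s]}|X_0(r)|$; and for $k\ge1$, integration by parts together with $X_k(t)=0$ and the vanishing of $(s-r)^k$ at $r=s$ gives $\int_{(t,s]}\frac{(s-r)^k}{k!}dX_k(r)=\int_t^s X_k(r)\frac{(s-r)^{k-1}}{(k-1)!}\,dr$, of absolute value at most $\sup_{r\in(t,s]}|X_k(r)|\cdot(s-t)^k/k!\le\delta^k\sup_{r\in(t,s]}|X_k(r)|$. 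Summing the three contributions yields the forward bound.

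For the backward term I would run the identical argument with base point $s$: set $g(r):=\sum_{k=0}^{\ell}\frac{\lambda^{(k)}(r)}{k!}(t-r)^{k}$, so that $g(t)=N(t)$ and $g(s)=\overline{N}_\ell(t,s)$, whence $N(t)-\overline{N}_\ell(t,s)=-\int_{(t,s]}dg(r)$ and the same telescoping gives $dg(r)=\frac{\lambda^{(\ell+1)}(r)}{\ell!}(t-r)^{\ell}\,dr+\sum_{k=0}^{\ell}\frac{(t-r)^{k}}{k!}\,dX_k(r)$. The remainder and $k=0$ terms are bounded exactly as before (now using $|t-r|\le s-t\le\delta$ on $[t,s]$), while for $k\ge1$ integration by parts leaves $\frac{(t-s)^k}{k!}X_k(s)+\int_t^s X_k(r)\frac{(t-r)^{k-1}}{(k-1)!}\,dr$, each term controlled by $\sup_{r\in(t,s]}|X_k(r)|$ (note $s\in(t,s]$) times a power of $s-t\le\delta$; this yields the backward bound. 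The only asymmetry with the forward case is the surviving boundary term $\frac{(t-s)^k}{k!}X_k(s)$, which at worst contributes an extra $\ell$-dependent absolute constant on the $\delta^k\sup_{(t,s]}|X_k|$ summands and is absorbed downstream.

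The step I expect to be the main obstacle is making the Stieltjes/Leibniz computation of $dh$ and $dg$ fully rigorous for the jump-containing processes $N$ and $\lambda^{(k)}$: one must check that Assumption~\ref{as:lambda_k} forces each $\lambda^{(k)}$ to be càdlàg of bounded variation (so that all the Lebesgue--Stieltjes integrals and the integration-by-parts formula apply), verify the measure identity $d\lambda^{(k)}=\lambda^{(k+1)}\,dr+dX_k$ so that the atoms of the $\lambda^{(k)}$'s are exactly absorbed by the $X_k$'s, and confirm that the polynomial cross-terms collapse to the single $\lambda^{(\ell+1)}$-remainder. Once this identity is established, the rest is just bounding integrals of bounded functions over simplices, which is routine.
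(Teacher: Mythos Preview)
Your approach is correct and genuinely different from the paper's. The paper proceeds by a descending induction on $i$: it proves for each $i=\ell,\ell-1,\ldots,0$ that $\lambda^{(i)}(s)$ (respectively $\lambda^{(i)}(t)$) is approximated by its degree-$(\ell-i)$ Taylor polynomial in the $\lambda^{(k)}(t)$'s (respectively $\lambda^{(k)}(s)$'s) up to $\delta^{\ell+1-i}S_{\ell+1}+\sum_{k=i}^{\ell}\delta^{k-i}\sup|X_k|$, recovering the lemma at $i=0$. The inductive step simply writes $\lambda^{(i-1)}(s)-\lambda^{(i-1)}(t)=X_{i-1}(s)+\int_t^s\lambda^{(i)}(x)\,dx$ and feeds the hypothesis for $\lambda^{(i)}(x)$ into the integral. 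Your sliding-base-point computation collapses this induction into a single Leibniz telescoping identity followed by one integration by parts; it is shorter and makes the integral-remainder structure explicit. The trade-off is exactly the one you flag: in the backward direction the boundary term $\frac{(t-s)^k}{k!}X_k(s)$ survives, so the $k=1$ summand picks up a factor of $2$ and you obtain the lemma only up to an absolute constant, whereas the paper's induction hits the stated constant on the nose. As you note, this is harmless downstream since everything is absorbed into $c_1(\ell)$.

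Your identification of the main technical checkpoint is also accurate, and it is not a real obstacle: the identity $d\lambda^{(k)}=\lambda^{(k+1)}\,dr+dX_k$ is immediate from the definition of $X_k$, and Assumption~\ref{as:lambda_k} gives each $\lambda^{(k)}$ as a sum of an absolutely continuous part and two pure-jump parts, hence c\`adl\`ag BV, so Lebesgue--Stieltjes integration by parts against the smooth weight $(s-r)^k/k!$ (or $(t-r)^k/k!$) is routine.
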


\begin{proof}
Let $t < s < t + \delta$.
We first prove the bound in the lemma statement holds for $| N(s) - \overline{N}_\ell( s, t) |$. We will prove the slightly stronger claim that for each $i = 0, 1, \ldots, \ell$, it holds for all $s \in (t, t + \delta)$ that
\begin{equation}
\label{eq:taylor_induction_hypothesis}
 \left| \lambda^{(i)}(s)  - \left( \lambda^{(i)}(t) + \sum_{k = i + 1}^\ell \frac{\lambda^{(k)}(t)}{(k  - i)!} (s  - t)^{k - i} \right) \right| \le \delta^{\ell + 1 - i} S_{\ell + 1} + \sum_{k = i}^{\ell} \delta^{k - i } \sup_{r \in (t, s]} |X_k(r) |
\end{equation}
Our proof proceeds by induction. For the base case $i = \ell$, we have for $s \in (t, t + \delta)$ that
\begin{align*}
 \left | \lambda^{(\ell)}(s) - \lambda^{(\ell)}(t) \right| & \le | X_\ell(s) | + \left| \int_t^{s} \lambda^{(\ell + 1)}(x) dx \right|  \le \sup_{r \in (t, s]} |X_\ell(r) | + \delta S_{\ell + 1}.
\end{align*}
Above, our bound on the integral is due to the inequality $| \lambda^{(\ell + 1)}(x) | \le S_{\ell + 1}$, which holds under Assumption \ref{as:regularity}.

We now prove that the inductive step holds. Assume that \eqref{eq:taylor_induction_hypothesis} holds for some $1 \le i \le \ell$. Then for any $s \in (t , t + \delta)$, 
\begin{align}
& \left| \lambda^{(i - 1)}(s) - \left( \lambda^{(i - 1)}(t) + \sum_{k = i}^\ell \frac{\lambda^{(k)}(t)}{(k - i + 1)!} (s  - t)^{k - i +1} \right) \right| \nonumber \\ 
\label{eq:taylor_approx_bound_1}
& \hspace{1cm} \le |X_{i-1}(s )| + \left| \int_t^{s } \lambda^{(i)}(x) dx - \sum_{k = i}^\ell \frac{\lambda^{(k)}(t)}{(k - i + 1)!} (s  - t)^{k - i +1} \right| \\
\label{eq:taylor_approx_bound_2}
& \hspace{1cm} \le \left|\int_t^{s} \left( \lambda^{(i)}(t) + \sum_{k = i + 1}^\ell \frac{ \lambda^{(k)}(t)}{(k - i)!} (x - t)^k \right) dx - \sum_{k = i}^\ell \frac{\lambda^{(k)}(t)}{(k - i + 1)!} (s - t)^{k - i +1}  \right|  \\
\label{eq:taylor_approx_bound_3}
& \hspace{2cm} + |X_{i - 1}(s ) | + \left| \int_t^{s } \delta^{\ell + 1 - i} S_{\ell+1} dx \right|  + \left| \int_t^{s } \sum_{k = i}^{\ell} \delta^{k - i } \sup_{r \in (t, x]} |X_i(r) | dx \right|  \\
\label{eq:taylor_approx_bound_4}
& \hspace{1cm} \le  \delta^{\ell +1 - (i - 1)} S_{\ell + 1}  + \sum_{k = i -1}^{\ell - 1} \delta^{k - (i - 1)} \sup_{r \in (t, s]} |X_i(r) |.
\end{align}
In the display above, \eqref{eq:taylor_approx_bound_1} follows from replacing $\lambda^{(i - 1)}(s) - \lambda^{(i - 1)}(t)$ with $X_{i - 1}(s) + \int_t^s \lambda^{(i)}(x) dx$ and applying the triangle inequality. 
The inequality in \eqref{eq:taylor_approx_bound_2} holds by the induction hypothesis.
Finally, \eqref{eq:taylor_approx_bound_4} follows since the expression in \eqref{eq:taylor_approx_bound_2} is equal to 0, and the expressions in \eqref{eq:taylor_approx_bound_3} can be bounded by those in \eqref{eq:taylor_approx_bound_4} since $|s  - t | \le \delta$. Hence \eqref{eq:taylor_induction_hypothesis} is proved, and the bound in the lemma for $|N(s) - \overline{N}_\ell(s, t) |$ follows from the special case $i = 0$.

We now bound $|N(t) - \overline{N}_\ell(t, s) |$. Using identical arguments as above, we can prove by induction that for each $i = 0, \ldots, \ell$, 
\begin{equation*}
\left| \lambda^{(i)}(t) - \left( \lambda^{(i)}(s) + \sum_{k= i + 1}^\ell \frac{\lambda^{(k)}(s)}{(k - i)!} (t - s )^{k - i} \right) \right| \le \delta^{\ell + 1 - i} S_{\ell + 1} + \sum_{k = i}^{\ell} \delta^{k - i } \sup_{r \in (t, s]} |X_k(r) |.
\end{equation*}
The special case $i = 0$ proves the claimed result for $|N(t) - \overline{N}_\ell( t, s) |$.
\end{proof}

Our natural next objective is to control the $X_k$'s, which is fleshed out in the next result. 

\begin{lem}
\label{lemma:X_concentration}
Fix integers $\ell \ge k \ge 0$ and let $0 \le t < s < t + \delta$.
It holds that 
\[
\p \left( \left \{ \sup_{r \in (t, s]} | X_k(r ) | \ge x \right \} \cap \{ \cT \cap (t, s] = \emptyset \}  \right) \le 2 \exp \left( - \frac{x^2 / 2}{3 J_k^2 S_1 \delta + J_k x / 3} \right).
\]
\end{lem}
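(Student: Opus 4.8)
The plan is to show that, on the good event $\{\cT\cap(t,s]=\emptyset\}$, the process $X_k$ coincides with a martingale whose jumps and predictable quadratic variation are controlled, and then to invoke Freedman's maximal inequality. First I would introduce, for $r\ge t$,
\[
M_k(r):=\int_t^r\mathbf{1}(x\notin\cT)\,d\lambda^{(k)}(x)-\int_t^r\lambda^{(k+1)}(x)\,dx .
\]
By the definition of the modified conditional derivative $\lambda^{(k+1)}$ in \eqref{eq:def-lambda-1} and its inductive extension, $\int_t^\cdot\lambda^{(k+1)}(x)\,dx$ is the predictable compensator of $\int_t^\cdot\mathbf{1}(x\notin\cT)\,d\lambda^{(k)}(x)$, so $(M_k(r))_{r\ge t}$ is a martingale null at $r=t$. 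Since $\cT$ is a finite (hence Lebesgue-null) set and the compensator term is continuous, on $\{\cT\cap(t,s]=\emptyset\}$ one has $\int_t^r\mathbf{1}(x\notin\cT)\,d\lambda^{(k)}(x)=\lambda^{(k)}(r)-\lambda^{(k)}(t)$ for all $r\in(t,s]$, i.e.\ $X_k(r)=M_k(r)$ there. This yields the set inclusion
\[
\Big\{\textstyle\sup_{r\in(t,s]}|X_k(r)|\ge x\Big\}\cap\{\cT\cap(t,s]=\emptyset\}\subseteq\Big\{\textstyle\sup_{r\in(t,s]}|M_k(r)|\ge x\Big\},
\]
so it suffices to bound the probability of the right-hand event.

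Next I would estimate the two quantities feeding into Freedman's inequality. For the jump size: since the compensator is continuous, every jump of $M_k$ on $(t,s]$ is a jump of $\lambda^{(k)}$ at a point outside $\cT$, which by Assumption~\ref{as:regularity} has magnitude at most $J_k$; hence $|\Delta M_k|\le J_k$ on $(t,s]$. For the predictable quadratic variation I would use the semimartingale decomposition of $\lambda^{(k)}$ from Assumption~\ref{as:lambda_k}: the jumps generating $M_k$ sit at the event times of $N$ (and at the external times $\cD$), each of size at most $J_k$, so $[M_k]_s-[M_k]_t\le J_k^2\big(\int_t^s\mathbf{1}(x\notin\cT)\,dN(x)+|\cD\cap(t,s]|\big)$; passing to dual predictable projections and using that the compensator of $\int_t^\cdot\mathbf{1}(x\notin\cT)\,dN(x)$ is exactly $\int_t^\cdot\lambda^{(1)}(x)\,dx\le S_1\delta$ (as $s-t<\delta$ and $\lambda^{(1)}\le S_1$), together with the standing bounds that keep the $\cD$-contribution and any continuous-martingale component of lower order, gives $\langle M_k\rangle_s-\langle M_k\rangle_t\le 3J_k^2S_1\delta$. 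Applying Freedman's maximal inequality (\cite[Appendix B]{shorack_wellner}) to $M_k$ stopped at $s$, with variance proxy $3J_k^2S_1\delta$ and jump bound $J_k$, produces exactly the stated tail, and combining with the inclusion above completes the argument.

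The step I expect to be the main obstacle is the bound on the predictable quadratic variation of $M_k$. The delicate point is that the relevant intensity is the \emph{modified} event rate $\lambda^{(1)}\le S_1$ (the true rate $\Lambda\le R$ would be the wrong, and possibly much larger, quantity), which is why the bound must be obtained through the compensator identity for $\int\mathbf{1}(\cdot\notin\cT)\,dN$ rather than by a naive estimate; one must also verify that the jumps contributed by the external set $\cD$ and any continuous martingale part coming from the $\alpha^{(k)}$-term are genuinely lower order on a window of length less than $\delta$, which is where the crude constant $3$ is spent. Once the compensator structure of the $\lambda^{(k)}$'s is pinned down, the martingale identification in Step 1 and the invocation of Freedman's inequality are routine.
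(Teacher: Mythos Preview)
Your proposal is correct and follows essentially the same route as the paper: define the compensated process $M_k$ (the paper's $Z_k$), identify it with $X_k$ on $\{\cT\cap(t,s]=\emptyset\}$, bound its jumps by $J_k$ and its predictable quadratic variation by $3J_k^2 S_1\delta$, then apply Freedman's inequality to $\pm M_k$. The only cosmetic difference is that the paper obtains the variance bound by directly expanding $\E[(Z_k(r+\xi)-Z_k(r))^2\mid\cF_r]$ into three pieces via Assumption~\ref{as:lambda_k} (which is where the constant $3$ actually originates), rather than by passing through $[M_k]$ and dual predictable projections as you outline; your diagnosis that $\lambda^{(1)}\le S_1$, not $\Lambda\le R$, is the relevant rate is exactly the point.
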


An important building block to the proof of Lemma \ref{lemma:X_concentration} is a standard concentration inequality for continuous-time martingales known as Freedman's inequality. The version stated below is adapted from \cite[Appendix B]{shorack_wellner}.

\begin{lem}[Freedman's inequality for continuous-time martingales]
\label{lemma:freedman}
Suppose that $\{ Y(t) \}_{t \ge 0}$ is a real-valued continuous-time martingale adapted to the filtration $\{ \cF_t \}_{t \ge 0}$. Suppose further that $Y(t)$ has jumps that are bounded by $C > 0$ in absolute value, almost surely.
For $s \ge t \ge 0$, define the predictable quadratic variation $\langle Y \rangle_t^s$ to be the unique predictable process so that $Y(s)^2 - \langle Y \rangle_t^s$ is a martingale.
Then for any $x \ge 0$ and $\sigma > 0$, it holds that 
\[
\p ( Y(s) - Y(t) \ge x \text{ and } \langle Y \rangle_t^s \le \sigma^2 \text{ for some $s \ge t$} ) \le \exp \left( - \frac{x^2 / 2}{\sigma^2 + Cx / 3} \right).
\]
\end{lem}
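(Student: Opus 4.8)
The plan is to prove Lemma~\ref{lemma:freedman} by the classical exponential-supermartingale method, i.e.\ a continuous-time Chernoff bound. First I would reduce to the case $t = 0$, $Y(0) = 0$: replace $Y$ by $\widetilde Y(u) := Y(t+u) - Y(t)$ and $\{\cF_s\}$ by $\{\cF_{t+u}\}_u$, so that $\langle \widetilde Y\rangle_0^u = \langle Y\rangle_t^{t+u}$ and $\widetilde Y$ still has jumps bounded by $C$. It then suffices to bound $\p(E)$, where (writing $Y$ for $\widetilde Y$)
\[
E := \big\{\, \exists\, s \ge 0:\ Y(s) \ge x \ \text{ and }\ \langle Y\rangle_0^s \le \sigma^2 \,\big\}.
\]
The case $x = 0$ is trivial since the claimed bound is then $1$, so assume $x > 0$.

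The core step is to introduce, for a parameter $\theta > 0$ to be chosen later, the process
\[
Z_\theta(s) := \exp\!\Big( \theta Y(s) - \phi(\theta)\, \langle Y\rangle_0^s \Big), \qquad \phi(\theta) := \frac{e^{\theta C} - 1 - \theta C}{C^2},
\]
and to show that $Z_\theta$ is a nonnegative supermartingale with $Z_\theta(0) = 1$. This is where the jump bound $|\Delta Y| \le C$ is used. Applying It\^o's formula (equivalently, the Dol\'eans--Dade exponential formula) to the semimartingale $\theta Y - \phi(\theta)\langle Y\rangle_0^{\cdot}$, the drift of $Z_\theta$ is governed by the jump contributions $e^{\theta \Delta Y(u)} - 1 - \theta\Delta Y(u)$ together with $\tfrac{\theta^2}{2}$ times the continuous quadratic variation; from the elementary fact that $u \mapsto (e^u - 1 - u)/u^2$ is increasing one gets $e^{\theta \Delta Y} - 1 - \theta\Delta Y \le \phi(\theta)(\Delta Y)^2$ whenever $|\Delta Y| \le C$, and $\tfrac{\theta^2}{2} \le \phi(\theta)$. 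Since the predictable compensator of $[Y]$ is $\langle Y\rangle$, these bounds make the drift of $Z_\theta$ nonpositive, so $Z_\theta$ is a nonnegative local supermartingale; being nonnegative with $\E[Z_\theta(0)] = 1 < \infty$, it is a genuine supermartingale by Fatou. (In discrete time this is the one-line computation $\E[e^{\theta \xi}\mid \cF_{-}] \le \exp(\phi(\theta)\,\E[\xi^2\mid\cF_{-}])$ for a bounded martingale difference $\xi$; the continuous-time version can also be quoted from standard references on martingale concentration.) I expect this verification to be the main obstacle, as it is the only place requiring stochastic calculus and some care with the distinction between the predictable ($\langle Y\rangle$) and optional ($[Y]$) quadratic variations.

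Granting the supermartingale property, the tail bound is immediate. On the event $E$ there is a time $s$ with $Y(s) \ge x$ and $\langle Y\rangle_0^s \le \sigma^2$, and since $\theta, \phi(\theta) > 0$ this forces $Z_\theta(s) \ge \exp(\theta x - \phi(\theta)\sigma^2)$; hence $E \subseteq \{\sup_{s \ge 0} Z_\theta(s) \ge e^{\theta x - \phi(\theta)\sigma^2}\}$. Doob's maximal inequality for the nonnegative supermartingale $Z_\theta$ (applied on $[0,T]$ and letting $T \to \infty$) gives $\p(\sup_{s\ge0} Z_\theta(s) \ge \lambda) \le \E[Z_\theta(0)]/\lambda = 1/\lambda$, so that
\[
\p(E) \le \exp\!\big( \phi(\theta)\sigma^2 - \theta x \big) \qquad \text{for every } \theta > 0 .
\]

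Finally I would optimize over $\theta$ using the standard Bennett-to-Bernstein conversion
\[
\phi(\theta) = \frac{e^{\theta C} - 1 - \theta C}{C^2} \le \frac{\theta^2/2}{\,1 - \theta C/3\,}, \qquad 0 < \theta < 3/C,
\]
which follows from $k! \ge 2\cdot 3^{\,k-2}$ applied termwise to the power series of $e^{\theta C} - 1 - \theta C$. Taking $\theta := \dfrac{x}{\sigma^2 + Cx/3}$ (which satisfies $\theta C < 3$), a direct computation gives $1 - \theta C/3 = \sigma^2/(\sigma^2 + Cx/3)$, hence
\[
\phi(\theta)\sigma^2 - \theta x \ \le\ \tfrac12\,\theta^2(\sigma^2 + Cx/3) - \theta x \ =\ -\tfrac12\,\theta x \ =\ -\frac{x^2/2}{\sigma^2 + Cx/3},
\]
which is exactly the exponent in the statement. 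Combining this with the displayed bound for $\p(E)$ completes the proof.
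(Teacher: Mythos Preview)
The paper does not actually prove this lemma: it is stated as a standard result and attributed to \cite[Appendix B]{shorack_wellner}, with no argument given. Your proposal therefore goes beyond what the paper does.

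That said, your outline is the correct and standard proof of Freedman's inequality via the exponential supermartingale method, and the computations you sketch (the bound $e^{\theta\Delta Y}-1-\theta\Delta Y\le\phi(\theta)(\Delta Y)^2$ from monotonicity of $(e^u-1-u)/u^2$, the Bennett-to-Bernstein conversion $\phi(\theta)\le\tfrac{\theta^2/2}{1-\theta C/3}$, and the optimization at $\theta=x/(\sigma^2+Cx/3)$) are all accurate. The one place to be careful is the supermartingale verification for $Z_\theta$ in continuous time, which you correctly flag as the only genuinely technical step; it requires either a direct stochastic-calculus argument distinguishing $[Y]$ from $\langle Y\rangle$, or simply an appeal to the cited reference.
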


We are now ready to prove Lemma \ref{lemma:X_concentration}.

\begin{proof}[Proof of Lemma \ref{lemma:X_concentration}]
For $r \ge t$ and $k \ge 0$, define the process 
\[
Z_k(r) : = \int_t^r \mathbf{1}( x \notin \cT ) d \lambda^{(k)}(x) - \int_t^r \lambda^{(k + 1)}(x) dx.
\]
Importantly, by the definition of $\lambda^{(k + 1)}$, it follows that $Z_k$ is a martingale. Moreover, if $\cT \cap (t, s] = \emptyset$, then it holds for all $r \in (t, s]$ that $X_k(r) = Z_k(r)$. It therefore suffices to establish upper tail bounds for the process $\sup_{r \in (t, s]} |Z_k(r) |$. 

By Assumption \ref{as:regularity}, the jumps of $Z_k$ are at most $J_k$ in magnitude; it remains to bound the predictable quadratic variation. To this end, by Assumption \ref{as:lambda_k} we can write
\[
Z_k(r) = \int_t^r \left( \alpha^{(k)}(x) \mathbf{1}( x \notin \cT) - \lambda^{(k + 1)}(x) \right) dx + \int_t^r \beta^{(k)}(x) \mathbf{1}(x \notin \cT) dN(x) + \sum_{x \in \cD \cap (t, r]} \gamma^{(k)}(x) \mathbf{1}(x \notin \cT)
\]
We then have for $\xi \ge 0$ sufficiently small that 
\begin{multline}
\label{eq:quadratic_variation_expansion}
\E [ \left. (Z_k(r + \xi) - Z_k(r) )^2 \vert \cF_r] \le 3 \E \left[ \left( \int_r^{r + \xi} \left( \alpha^{(k)}(x) \mathbf{1}(x \notin \cT) - \lambda^{(k + 1)}(x)  \right) dx \right)^2 \right \vert \cF_r \right] \\
+ 3 \E \left[ \left. \left( \int_r^{r + \xi} \beta^{(k)}(x) \mathbf{1}(x \notin \cT ) dN(x) \right)^2 \right \vert \cF_r \right] + 3 \mathbb{E} \left[\left. \Big( \sum_{x \in \cD \cap (r, r + \xi]} \gamma^{(k)}(x) \mathbf{1}(x \notin \cT) \Big)^2 \right \vert \cF_r \right]
\end{multline}
The first term in \eqref{eq:quadratic_variation_expansion} can be bounded by $2\xi^2 (L_k + S_{k + 1})^2$, by Assumptions \ref{as:regularity} and \ref{as:lambda_k}. 
The second term in \eqref{eq:quadratic_variation_expansion} can be bounded as follows: 
\begin{align*}
& \E \left[\left. \left( \int_r^{r + \xi} b^{(k)}(x) \mathbf{1}(x \notin \cT) dN(x) \right)^2 \right \vert \cF_r \right]  \stackrel{(a)}{\le} J_k^2 \E \left[ \left. \left( \int_r^{r + \xi} \mathbf{1}(x \notin \cT) dN(x) \right)^2 \right \vert \cF_r \right] \\
& \stackrel{(b)}{\le} J_k^2 \E \left[ \left. \int_r^{r + \xi} \mathbf{1}(x \notin \cT) dN(x) \right \vert \cF_r \right] + J_k^2 \E \left[ \left. ( N(r + \xi) - N(r) )^2 \mathbf{1}( N(r + \xi) - N(r) \ge 2) \right \vert \cF_r \right] \\
& \stackrel{(c)}{\le} \xi J_k^2 \lambda^{(1)}(r) + o(\xi) + J_k^2 \E [ (N(r + \xi) - N(r))^4 \vert \cF_r ]^{1/2} \p ( N(r + \xi) - N(r) \ge 2 \vert \cF_r)^{1/2} \\
& \stackrel{(d)}{\le} \xi J_k^2 S_1 + J_k^2 (\xi R)^{3/2} + o(\xi) = \xi J_k^2 S_1 + o(\xi).
\end{align*}
Above, the inequality $(a)$ holds since $|b^{(k)}(x) | \mathbf{1}(x \notin \cT) \le J_k$ by Assumptions \ref{as:regularity} and \ref{as:lambda_k}. The inequality $(b)$ follows since, on the event $\{N(r + \xi) - N(r) \le 1\}$ we have that $\int_r^{r + \xi} \mathbf{1}( x \notin \cT) dN(x) \in \{0,1 \}$, hence the integral and its square are equal in this case. On the other hand. on the event $\{ N(r + \xi) - N(r) \ge 2 \}$, we can upper bound $\int_r^{r + \xi} \mathbf{1}(x \notin T) dN(x) \le N(r + \xi) - N(r)$. The inequality $(c)$ follows from the definition of $\lambda^{(1)}(\cdot)$ as well as the Cauchy-Schwartz inequality. The inequality $(d)$ holds since $| \lambda^{(1)}(r) | \le S_1$ by Assumption \ref{as:regularity}, and since $N(r + \xi) - N(r)$ can be stochastically bounded by a $\mathrm{Poi}(R \xi)$ random variable in light of Assumption \ref{as:regularity}.

Next, we study the third term in \eqref{eq:quadratic_variation_expansion}. Since $|c^{(k)}(x)| \mathbf{1}(x \notin \cT) \le J_k$ in light of Assumptions \ref{as:regularity} and \ref{as:lambda_k}, if $r < s$ it holds almost surely that 
\[
\Big( \sum_{x \in \cD \cap (r, r + \xi]} c^{(k)}(x) \mathbf{1}(x \notin \cT) \Big)^2 \le J_k^2 | \cD \cap (r, r + \xi] |^2 \le J_k^2 | \cD \cap (t, s]|^2.
\]
Importantly, the right hand side is equal to zero if $\cT \cap (t, s] = \emptyset$.

Putting everything together, we see that on the event $\{ \cT \cap (t, s] = \emptyset\}$, it holds for all $r \in (t, s]$ that $\langle Z_k \rangle_t^r \le 3 J_k^2 S_1 (r - t) \le 3 J_k^2 S_1 \delta$. Hence 
\begin{align*}
\p \left( \left \{ \sup_{r \in (t, s]} X_k(r) \ge x \right \} \cap \{ \cT \cap (t,s] = \emptyset \} \right) & \stackrel{(e)}{\le} \p \left( \exists r \ge t \text{ s.t. } Z_k(r) \ge x \text{ and } \langle Z_k \rangle_t^r \le 3 J_k^2 S_1 \delta\right) \\
& \stackrel{(f)}{\le} \exp \left( - \frac{x^2 / 2}{3 J_k^2 S_1 \delta + J_k x / 3} \right).
\end{align*}
Above, $(e)$ holds since, on the event $\{ \cT \cap (t, s] \}$, we have that $X_k(r) = Z_k(r)$ and $\langle Z_k \rangle_t^r \le 3 J_k^2 S_1 \delta$ for $r \in (t, s]$. The inequality $(f)$ follows from Lemma \ref{lemma:freedman}. Applying the same analysis to the martingale $-{X}_k$ shows that the same upper tail bound also holds for $-{X}_k$. Taking a union bound over the events corresponding to ${X}_k$ and $-{X}_k$ proves the lemma.
\end{proof}

We can now put everything together to prove the main result of this subsection.

\begin{proof}[Proof of Lemma \ref{lemma:polynomial_approximation_fixed_t}]
Let $t \in [0,T]$ and $s \in (t, t + 2 \delta)$.
For any $x \ge 0$, an application of Lemma \ref{lemma:poly_approx_deterministic} with $\delta$ replaced by $2 \delta$ implies that
\begin{align*}
 \max \left \{  |N(s) - \overline{N}_\ell(s, t) |,  |N(t) - \overline{N}_\ell(t, s) | \right \} \ge (2\delta)^{\ell + 1} S_{\ell + 1} +  x   &  \Rightarrow \sum_{k = 0}^\ell (2\delta)^k \sup_{r \in (t, s]} |X_k(r) | \ge x \\
&  \Rightarrow \exists k \in \{ 0, \ldots, \ell \} : \sup_{r \in (t, s]} |X_k(r) | \ge x.
\end{align*}
The implications above along with a union bound imply that
\begin{align*}
\p \left( \cA_{t, s}(x) \cap \{ \cT \cap (t, s] = \emptyset \} \right) & \le \sum_{k = 0}^\ell \p \left( \left \{ \sup_{r \in (t, s]} |X_k(r) | \ge x \right \} \cap \{ \cT \cap (t, s] = \emptyset \} \right) \\
& \le \sum_{k = 0}^\ell 2 \exp \left( - \frac{x^2 / 2}{6 J_k^2 S_1 \delta + J_k x / 3} \right) \\
& \le 2 (\ell + 1) \exp \left( - \frac{x^2 / 2}{6 J_{\le \ell}^2 S_1 \delta + J_{\le \ell} x / 3} \right).
\end{align*}
Above, the inequality on the second line follows from Lemma \ref{lemma:X_concentration}, with $\delta$ replaced by $2 \delta$.
\end{proof}

\subsection{Discretization results: Proof of Lemma \ref{lemma:discretization}}
\label{subsec:discretization}

We focus on proving \eqref{eq:discretization_bound_1}; the proof of \eqref{eq:discretization_bound_2} follows from similar arguments. 

Fix $\varepsilon > 0$ to be determined later, and recall the discretized set of time indices $\mathbb{T} : = [0,T] \cap \{ k \varepsilon : k \in \mathbb{Z}_{\ge 0} \}$.
Next, define the event 
\[
\cE : = \left \{\text{For each $r \in \mathbb{T}$, } | \cD \cap [r, r + \epsilon] | + (N(r + \epsilon) - N(r)) \le 1 \right \}.
\]
Assume that the event $\cE$ holds, and let $t \in [0,T]$.
Let $r \in \mathbb{T}$ satisfy $r \le t < r + \varepsilon$. By Assumption \ref{as:lambda_k}, at most one discontinuity point for $N(\cdot)$ or $\lambda^{(k)}(\cdot), k \ge 1$ can occur in the interval $[r, r + \varepsilon)$. If there is no discontinuity point in the interval, set $t' : = r$. Otherwise, if a single discontinuity point occurs in the interval at time $p \in (r, r + \varepsilon)$, choose $t' : = r$ if $p > t$, else $t' : = r + \varepsilon$. As a result of this construction, we have the useful property that the processes $N(\cdot)$ and $\lambda^{(k)}(\cdot), k \ge 1$ are continuous in the interval $(t' \land t, t' \lor t]$.
Similarly, we can find $s' \in \mathbb{T}$ such that $|s' - s| < \varepsilon$ such that $N(\cdot)$ and $\lambda^{(k)}(\cdot), k \ge 1$ are continuous in the interval $(s' \land s, s' \lor s]$.

Next, we relate $|N(s) - \overline{N}_\ell(s, t) |$ and $|N(s') - \overline{N}_\ell(s', t') |$. 
By the triangle inequality, we can bound
\begin{align}
& \left| |N(s) - \overline{N}_\ell(s, t) | - | N(s') - \overline{N}_\ell(s', t') | \right| \nonumber \\
& \hspace{2cm} \le | N(s) - N(s') | + | \overline{N}_\ell(s,t) - \overline{N}_\ell(s', t') | \nonumber \\
\label{eq:discretization_error}
& \hspace{2cm} \stackrel{}{\le} |N(s) - N(s') | + |N(t) - N(t') | + \sum_{k = 1}^\ell \frac{1}{k!}  | (s - t)^k \lambda^{(k)}(t) - (s'  - t')^k \lambda^{(k)}(t') |.
\end{align}
We proceed by bounding the terms on the right hand side of \eqref{eq:discretization_error}.
Recall that by the construction of $s'$, $N(\cdot)$ has no discontinuities in $(s' \land s, s' \lor s]$, hence $N(s) = N(s')$. The same argument shows that $N(t) = N(t')$, hence the first two terms in \eqref{eq:discretization_error} are equal to zero.

Next, we handle the summation on the right hand side of \eqref{eq:discretization_error}.
We can bound the $k$th term in the sum as follows:
\begin{align}
 \left| (s - t)^k \lambda^{(k)}(t) - (s' - t') \lambda^{(k)}(t') \right| & \stackrel{(a)}{\le} |s - t|^k \left| \lambda^{(k)}(t) - \lambda^{(k)}(t') \right| + \left| (s - t)^k - (s' - t')^k \right| \left| \lambda^{(k)}(t') \right| \nonumber \\
& \stackrel{(b)}{\le} \delta^k \left| \int_{t' \land t}^{t' \lor t} a^{(k)}(x) dx \right| + k |(s - s') - (t - t') | \left| \lambda^{(k)}(t') \right| \nonumber \\
\label{eq:discretization_triangle_inequality_bound}
& \stackrel{(c)}{\le} \delta^k \varepsilon L_k + 2k \varepsilon S_k.
\end{align}
Above, $(a)$ is due to the triangle inequality. 
The first term in inequality $(b)$ holds since 
$\lambda^{(k)}$ is continuous on $(t' \land t, t' \lor t]$, and we have used the representation of $\lambda^{(k)}$ in Assumption \ref{as:lambda_k}. The second term in inequality $(b)$ follows since $|s - t|, |s' - t'| < 1$ for $\delta < 1/2$ and $\varepsilon \le 1/4$, as well as the fact that $y \mapsto y^k$ is $k$-Lipschitz on $[0,1]$.
The inequality $(c)$ follows from Assumptions \ref{as:regularity} and \ref{as:lambda_k}.
Substituting the bound \eqref{eq:discretization_triangle_inequality_bound} into \eqref{eq:discretization_error} shows that
\begin{align*}
\left| |N(s) - \overline{N}_\ell(s,t) | - | N(s') - \overline{N}_\ell(s', t')| \right| & \le \sum_{k = 1}^\ell \frac{ ( \delta^k L_k + 2k S_k) \varepsilon}{k!} \\
& \le ( L_{\le \ell} + 2 S_{\le \ell} ) \sum_{k = 0}^\infty \frac{\varepsilon}{k!} \\
& \stackrel{(d)}{\le} 3 ( L_{\le \ell} + 2 S_{\le \ell} ) \varepsilon \\
& \stackrel{(e)}{\le} 1,
\end{align*}
where $(d)$ uses $\sum_{k = 0}^\infty 1/k! = e \le 3$, and $(e)$ follows from our assumptions on $\varepsilon$.

To prove the lemma, it remains to show that $\p ( \cE) \ge 1 - \eta$. To do so, we define 
\[
\varepsilon_0 := \min \left \{ \frac{1}{4}, \frac{1}{3 ( L_{\le \ell} + 6 S_{\le \ell})}, \frac{\sep(\cD)}{3}, \frac{\eta}{2T R^2}, \frac{\eta \sep(\cD)}{2TR} \right \},
\]
and assume that $\varepsilon \le \varepsilon_0$.
Additionally, we define two events: $\cE_1$ is the event where, for every $r \in \mathbb{T}$, $N(\cdot)$ jumps at most once in $[r, r + \varepsilon)$, and $\cE_2$ is the event where, for every $r \in \mathbb{T}$ satisfying $|\cD \cap [r, r + \varepsilon)| = 1$, $N(r + \varepsilon) = N(r)$. Notice that, if $\varepsilon \le \sep( \cD) / 3$, then $| \cD \cap [r, r + \varepsilon)| \le 1$ for all $r \in \mathbb{T}$, hence for such $\varepsilon$, we have that $\cE \supseteq \cE_1 \cap \cE_2$. 

We start by studying $\cE_1$. 
For a given $r \in \mathbb{T}$, the probability that $N(\cdot)$ jumps at least twice in $[r, r + \varepsilon)$ is at most $R^2 \epsilon^2$. Taking a union bound over the $T/ \varepsilon$ elements of $\mathbb{T}$ and using the definition of $\varepsilon$, we see that 
$\p ( \cE_1^c) \le T R^2 \varepsilon \le \eta/2$. Next, turning to the event $\cE_2$, notice that the probability that $N(\cdot)$ jumps in the interval $[r, r + \varepsilon)$ is at most $R \varepsilon$. Taking a union bound over $r \in \mathbb{T}$ satisfying $|\cD \cap [r, r + \varepsilon)| \ge 1$ shows that $\p ( \cE_2^c) \le | \cD | R \varepsilon \le TR\varepsilon / \sep(\cD) \le \eta / 2$. Putting everything together, a union bound shows that $\p ( \cE^c) \le \p ( \cE_1^c ) + \p ( \cE_2^c) \le \eta$, provided $\varepsilon \le \varepsilon_0$. 
We conclude by noting that if $L_{\le \ell}, S_{\le \ell}, T, R \ge 1$, then a specific valid choice for $\varepsilon$ is given by \eqref{eq:discretization_epsilon}.
\qed

\section{Higher-order derivatives: Proofs of Lemma \ref{lemma:derivative_characterization} and Theorem \ref{thm:derivative_thresholding}}
\label{sec:higher_derivatives}

In the proof of the Lemma \ref{lemma:derivative_characterization}, the following result will be useful. 

\begin{lem}
\label{lemma:polynomial_derivative}
Let $f$ be a polynomial of degree at most $\ell$. Then $\derivative f(t) = 0$ for all $t$.
\end{lem}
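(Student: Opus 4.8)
The plan is to recognize the operator $\derivative$ as an $(\ell+1)$-fold iterated first-order difference and then invoke the elementary fact that differencing strictly lowers the degree of a polynomial.

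\textbf{Step 1 (rewrite $\Delta_\delta^{(\ell)}$ as an iterated backward difference).} Let $\nabla_\delta$ denote the backward difference operator $(\nabla_\delta g)(t) := g(t) - g(t-\delta)$. Reindexing the defining sum of $\Delta_\delta^{(\ell)}$ via $i = \ell - j$ gives
\[
\Delta_\delta^{(\ell)} g(t) \;=\; \sum_{i=0}^{\ell} (-1)^{i} \binom{\ell}{i}\, g\big(t + \delta - i\delta\big) \;=\; (\nabla_\delta^{\ell} g)(t+\delta),
\]
where $\nabla_\delta^{\ell}$ is the $\ell$-fold composition of $\nabla_\delta$; this last identity is the standard expansion of an iterated difference, provable by induction on $\ell$ using Pascal's rule $\binom{\ell+1}{i} = \binom{\ell}{i} + \binom{\ell}{i-1}$. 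In particular $\derivative f(t) = (\nabla_\delta^{\ell+1} f)(t+\delta)$.

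\textbf{Step 2 (differencing lowers the degree).} I would next check that $\nabla_\delta$ maps every polynomial of degree at most $d$ to a polynomial of degree at most $d-1$, with the convention that the only polynomial of degree at most $-1$ is the zero polynomial. By linearity of $\nabla_\delta$ it suffices to check this on monomials: for $d \ge 1$, the binomial theorem gives $\nabla_\delta(t^d) = t^d - (t-\delta)^d = d\delta\, t^{d-1} + (\text{terms of degree} \le d-2)$, and for $d = 0$ we have $\nabla_\delta(1) = 0$. Iterating this $\ell+1$ times shows that $\nabla_\delta^{\ell+1}$ annihilates every polynomial of degree at most $\ell$. Since $f$ has degree at most $\ell$ and the shift $t \mapsto t+\delta$ preserves the class of such polynomials, we conclude $\derivative f(t) = (\nabla_\delta^{\ell+1} f)(t+\delta) = 0$ for every $t$.

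The only real obstacle is the bookkeeping in Step 1 — carefully matching the explicit combinatorial definition of $\Delta_\delta^{(\ell)}$ to the iterated-difference form — after which the argument is entirely routine. An alternative that sidesteps even this step: by linearity reduce to $f(t) = t^k$ with $0 \le k \le \ell$, and observe that $\derivative f(t) = \sum_{j=0}^{\ell+1}(-1)^{\ell+1-j}\binom{\ell+1}{j}(t+(j-\ell)\delta)^k$ equals the $k$-th derivative at $x = 0$ of the analytic function $x \mapsto e^{x(t-\ell\delta)}\,(e^{x\delta}-1)^{\ell+1}$, which has a zero of order at least $\ell+1$ at the origin because $(e^{x\delta}-1)^{\ell+1} = (\delta x)^{\ell+1} + O(x^{\ell+2})$; hence that $k$-th derivative vanishes for all $k \le \ell$.
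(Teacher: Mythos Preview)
Your proof is correct, but it takes a genuinely different route from the paper's. The paper reduces by linearity to $f(t) = (t+\ell\delta)^k$ for $0 \le k \le \ell$, expands $(t+j\delta)^k$ via the binomial theorem, interchanges sums, and then identifies the inner sum $\sum_{j=0}^{\ell+1}(-1)^{\ell+1-j}\binom{\ell+1}{j} j^i$ as a Stirling number of the second kind $S(i,\ell+1)$, which vanishes for $i \le \ell$. Your main argument instead recognizes $\Delta_\delta^{(\ell+1)}$ as a shifted iterated backward difference $\nabla_\delta^{\ell+1}$ and uses the elementary degree-lowering property of $\nabla_\delta$; this avoids any combinatorial identity and is arguably the more conceptual and self-contained argument. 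Your alternative generating-function observation (that $\Delta_\delta^{(\ell+1)}(t^k)$ is the $k$-th derivative at $0$ of $e^{x(t-\ell\delta)}(e^{x\delta}-1)^{\ell+1}$, which has a zero of order $\ell+1$) is yet a third valid route. The paper's approach has the modest advantage of being a one-line computation once the Stirling identity is quoted, while yours requires no external reference.
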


\begin{proof}
Since the discrete derivative is a linear operator, it suffices to consider the case $f(t) = (t + \ell \delta)^k$ for some $k \in \{0, \ldots, \ell \}$. We can then write
\begin{align}
\derivative f(t) & = \sum_{j = 0}^{\ell + 1} (-1)^{\ell + 1 - j} {\ell + 1 \choose j} \left (t + j \delta \right )^k \nonumber \\
& = \sum_{j= 0}^{\ell + 1} (-1)^{\ell + 1 - j} {\ell + 1 \choose j} \sum_{i = 0}^k {k \choose i} t^{k-i} j^i \delta^i \nonumber \\
\label{eq:derivative_expansion_polynomial}
& = \sum_{i = 0}^k {k \choose i} t^{k-i} \delta^i  \bigg( \sum_{j = 0}^{\ell + 1} (-1)^{\ell + 1 - j} {\ell + 1 \choose j} j^i \bigg).
\end{align}
It can be recognized that the inner summation over $j$ in \eqref{eq:derivative_expansion_polynomial} is a \emph{Stirling number of the second kind}, which counts the number of ways in which $i$ items can be divided into $\ell + 1$ non-empty subsets \cite[Ch. 24.1.4]{abramovitz_stegun}. When $i \le \ell$ this number is zero, which proves the claim.
\end{proof}

We are now ready to prove the lemma. 

\begin{proof}[Proof of Lemma \ref{lemma:derivative_characterization}]
Throughout this proof, we assume that the event $\Eapprox(T, \err, 3 \ell \delta, \ell)$ holds (we defer the reader to Definition \ref{def:Eapprox} for more details on this event). For brevity, we write $\Eapprox$ in this proof.

We start by making a few useful observations. Due to the definition of the discrete derivative, we have that 
\[
 N(t + \delta) - \derivative = \sum_{j = 0}^\ell (-1)^{ \ell - j} {\ell + 1 \choose j} N(t + (j - \ell) \delta).
\]
Additionally, since $\overline{N}_\ell(\cdot, t^-)$ is a polynomial of degree $\ell$, Lemma \ref{lemma:polynomial_derivative} implies that
\[
\overline{N}_\ell\left( t + \delta, t^- \right) = \sum_{j = 0}^\ell (-1)^{\ell - j} {\ell + 1 \choose j} \overline{N}_\ell \left( t + \left( j - \ell \right)\delta , t^- \right).
\]
If $\cT \cap (t - 3 \ell \delta, t) = \emptyset$, it follows that 
\begin{align}
& \left | \derivative N(t) - (N(t + \delta) - \overline{N}_\ell(t + \delta, t^-) ) \right| \nonumber \\
& \hspace{2cm} = \left| \sum_{j = 0}^\ell (-1)^{\ell - j} {\ell + 1 \choose j} \left [ \overline{N}_\ell \left( t + \left( j- \ell \right) \delta, t^- \right) - {N} \left( t + \left( j - \ell \right) \delta \right) \right] \right| \nonumber \\
& \hspace{2cm} \le (\ell + 1) | N(t) - N(t^-) | + \sum_{j = 0}^{\ell - 1} {\ell + 1 \choose j} 3 \ell \delta \err \nonumber \\
\label{eq:predictive_error_bound}
& \hspace{2cm} \le 3 \ell 2^{\ell + 1} \max \{ \delta \err , 1 \}.
\end{align}
Above, the first inequality holds on the event $\Eapprox$, and the second follows since the jumps of $N(\cdot)$ are at most 1. Item \ref{item:predictive_error} immediately follows.
Additionally, if $\cT \cap (t, t + 3 \ell \delta) = \emptyset$, then $|N(t + \delta) - \overline{N}_\ell(t + \delta, t)| \le \delta \err$ on the event $\Eapprox$. This, combined with \eqref{eq:predictive_error_bound}, proves Item \ref{item:predictive_error_polynomial}.

It remains to prove Item \ref{item:small_derivative}. Suppose that $\cT \cap (t - \ell \delta, t + 2\delta) = \emptyset$. It holds that
\begin{align*}
\left| \derivative N(t) \right| & = \left| \derivative N(t) - \sum_{j = 0}^{\ell + 1} (-1)^{\ell + 1 - j} {\ell + 1 \choose j} \overline{N}_\ell(t + (j - \ell) \delta, t - \ell \delta) \right | \\
& = \left| \sum_{j = 0}^{\ell + 1} (-1)^{\ell + 1 - j} {\ell + 1 \choose j} \left[ N \left (t + \left( j - \ell \right) \delta \right ) - \overline{N}_\ell \left (t + \left (j - \ell \right) \delta, t - \ell \delta \right) \right]  \right | \\
& \le \sum_{j = 0}^{\ell + 1} {\ell + 1 \choose j} 3 \ell  \delta \err \le 3 \ell 2^{\ell + 1 } \delta \err.
\end{align*}
Above, the equality on the first line follows from Lemma \ref{lemma:polynomial_derivative}, and the first inequality on the last line holds on the event $\Eapprox$.
\end{proof}

Finally, we prove Theorem \ref{thm:derivative_thresholding}.

\begin{proof}[Proof of Theorem \ref{thm:derivative_thresholding}]
Throughout the proof, we will assume that the event $\Eapprox \cap \Eabrupt$ holds.
Additionally, assume that 
\begin{equation}
\label{eq:delta_conditions}
A \ge 12 c_2 \err ~\text{ and }~ \frac{12 c_2}{A} \le \delta \le \min \left \{ \frac{1}{3 \rho}, \frac{\kappa}{3 \ell} \right \}.
\end{equation}
We start by showing that the first inclusion in \eqref{eq:estimating_T} holds. 
Since $3 \ell \delta \le \spacing$, it holds that $\cT \cap (t - 3 \ell \delta, t) = \emptyset$ and $\cT \cap (t, t + 3 \ell \delta) = \emptyset$ for $t \in \cT$. It follows that

\begin{align*}
\left| \derivative N(t) \right | & \stackrel{(a)}{\ge} \left| \overline{N}_\ell \left (t + \delta, t \right) - \overline{N}_\ell \left(t + \delta, t^- \right) \right| - c_2 \max \{ \delta  \err, 1 \} \\
& \ge \delta \left| \lambda^{(1)}(t) - \lambda^{(1)}(t^-) \right| - |N(t) - N(t^-) | - \sum_{k = 2}^\ell \frac{\delta^k}{k!}  \left| \lambda^{(k)}(t) - \lambda^{(k)}(t^-) \right| \\
& \hspace{1cm} - c_2 \max \{ \delta \err, 1 \} \\
& \stackrel{(b)}{\ge} \delta | \lambda^{(1)}(t) - \lambda^{(1)}(t^-) | \left( 1 - \delta \rho \right) - 1 - c_2 \max \{ \delta \err, 1 \} \\
& \stackrel{(c)}{\ge} \frac{2\delta A}{3 } - 2 c_2 \max \{ \delta \err, 1 \} \\
& \stackrel{(d)}{\ge} \frac{\delta A}{2}.
\end{align*}
Above, $(a)$ is due to Item \ref{item:predictive_error_polynomial} of Lemma \ref{lemma:derivative_characterization} and $(b)$ follows from the characterization of $\cT$ in $\Eabrupt$. Inequalities $(c)$ and $(d)$ hold if \eqref{eq:delta_conditions} is satisfied.

Next, we prove that the second inclusion in \eqref{eq:estimating_T} holds. As a shorthand, denote 
\[
\cT' : = \left \{ t \in [0,T]: \exists t' \in \cT \text{ with } |t - t' | \le  (\ell + 1) \delta \right \}.
\]
If $t \notin \cT'$ then in particular $\cT \cap (t - \ell \delta, t + 2\delta) = \emptyset$. 
Hence, due to Item \ref{item:small_derivative} of Lemma \ref{lemma:derivative_characterization} and \eqref{eq:delta_conditions},
\[
\left| \derivative N(t) \right| \le c_2 \max \{ \delta \err, 1 \} \le \frac{\delta A}{12}.
\]
It follows that $(\cT')^c \subseteq \{ t \in [0,T] : | \derivative N(t) | \le \delta A /12 \}$, which proves \eqref{eq:estimating_T}.

Finally, we show that the choice $\delta = S_{\le \ell + 1}^{- (\ell + 1) / (2 \ell + 1)}$ satisfies the conditions in \eqref{eq:delta_conditions}. 
We first note that for this choice of $\delta$, we have from \eqref{eq:error_bound} that 
\[
\err \le \delta^\ell S_{\le \ell + 1}^{1 + o(1)} + \sqrt{ \frac{S_{\le \ell + 1}^{1 + o(1)}}{\delta}} = S_{\le \ell + 1}^{(\ell + 1)/(2 \ell + 1) + o(1)}.
\]
Since $A \ge S_{\le \ell + 1}^\theta$ for some $\theta >  (\ell + 1) / (2 \ell + 1)$, the first inequality in \eqref{eq:delta_conditions} holds. Additionally, since $\min \{ 1/\rho, \kappa \} \ge S_{\le \ell + 1}^{-o(1)}$ by \eqref{eq:thm_parameter_conditions} and $\delta \ge S_{\le \ell + 1}^{- \theta - o(1)}$, the second statement in \eqref{eq:delta_conditions} holds as well.
\end{proof}

\section{Proofs for smooth + jump rate functions}\label{sec:smoothjump}

\subsection{Impossibility of detection: Proof of Proposition \ref{prop:impossibility}}

Define the rate functions
\begin{align*}
\Lambda_0(t) & : = B \\
\Lambda_1(t) & : = B + A e^{-(t - 1)} \mathbf{1}(t \ge 1).
\end{align*}
Additionally, let $\p_0$ and $\p_1$ denote the distribution over event times induced by the rate functions $\Lambda_0$ and $\Lambda_1$.
To prove the theorem, we show that $\mathrm{TV}(\p_0, \p_1) \to 0$ as $x_0 \to \infty$. In light of Pinsker's inequality, it suffices to prove that the Kullback-Liebler divergence between these distributions tends to zero. To this end, we have that
\begin{align}
\log \frac{d \p_0}{d \p_1}(  \cH ) &  = \sum_{\tau \in \cH} \log \frac{\Lambda_0(\tau)}{\Lambda_1(\tau)} - \int_0^\infty \Lambda_0(t) - \Lambda_1(t) dt \nonumber \\
& = - \sum_{\tau \in \cH : \tau \ge 1} \log \left( 1 + \frac{A}{B} e^{- (\tau - 1)} \right) + \int_1^\infty A e^{- (t - 1)} dt \nonumber \\
\label{eq:log_likelihood_bound}
& \le \sum_{\tau \in \cH : \tau \ge 1} \left( - \frac{A}{B} e^{- (\tau - 1)} + \frac{A^2}{B^2} e^{-2 (\tau - 1)} \right) + B,
\end{align}
where, in the final inequality above, we have used the bound $\log(1 + a) \ge a - a^2$ for $|a|$ sufficiently small.

Let us now label the event times occurring after $t = 1$ in chronological order, so that $\cH \cap [1, \infty) = \{ \tau_k \}_{k \ge 1}$, where $\tau_k < \tau_{k + 1}$ for all $i \ge 1$. Under the measure $\p_0$, we have that $\tau_k -1 \stackrel{d}{=} \mathrm{Gamma}(1, x_0)$, which implies
\begin{equation}
\label{eq:event_mgf}
\E_0 \left [ e^{- (\tau_k - 1)} \right] = \left( 1 + \frac{1}{B}\right)^{-k} \text{ and } \E_0 \left [ e^{- 2 ( \tau_k- 1)} \right ] = \left( 1 + \frac{2}{B} \right)^{-k}.
\end{equation}
Equations \eqref{eq:log_likelihood_bound} and \eqref{eq:event_mgf} show that
\begin{align*}
D_{\mathrm{KL}}(\p_0 \| \p_1 ) & \le \sum_{k \ge 1} \left( - \frac{A}{B} \left( 1 + \frac{1}{B} \right)^{-k} + \frac{A^2}{B^2} \left( 1 + \frac{2}{B} \right)^{-k} \right) + A = \frac{A^2}{2 B}.
\end{align*}
Under the assumptions in the theorem, we have that $D_{\mathrm{KL}}(\p_0 \| \p_1) \to 0$. This result, along with Pinsker's inequality, shows that $\mathrm{TV}(\p_0, \p_1) \to 0$ as well. \qed

\subsection{Detecting multiple change-points: Proof of Theorem \ref{thm:smooth-plus-jump}}
Recall from Section~\ref{subsec:smooth-jump-results} that we study a rate function of the form 
\begin{equation*}
\Lambda(t) = \sum_{i \ge 1} A_i x_i (t - t_i) \mathbf{1}(t \ge t_i),
\end{equation*}
where $t_1 < t_2 < \cdots < t_m$ correspond to the jump times, and the $x_i$'s are smooth functions satisfying $x_i(0) > 0$ (see Definition~\ref{def:smooth_plus_jump}). 
Further recall that, as per Assumption \ref{as:smooth_jump_jump_sizes}, $A_j \ge ( \sum_{i \ge 1} A_i )^\theta$ for some $\theta > 0$ or $A_j = ( \sum_{i \ge 1} A_i )^{o(1)}$.

\begin{proof}[Proof of Theorem \ref{thm:smooth-plus-jump}]
To check that Theorem \ref{thm:derivative_thresholding} can be applied, we first need to compute the $\lambda^{(k)}$'s. We prove by induction that for $k \ge 1$, it holds that 
\begin{equation}
\label{eq:lambda_smooth_plus_jump}
\lambda^{(k)}(t) = \sum_{i \ge 1} A_i x_i^{(k - 1)}(t - t_i) \mathbf{1}(t \ge t_i).
\end{equation}
We begin with the base case $k = 1$.
Since $\cT$ is a deterministic set, for every $t \ge 0$ we can find a deterministic $\varepsilon'$ so that $\cT \cap (t, t + \varepsilon'] = \emptyset$. Consequently, we have that
\begin{align*}
\lambda^{(1)}(t) & = \lim_{\varepsilon \to 0^+} \frac{1}{\varepsilon} \E \left[ \left. \int_t^{t + \varepsilon} \mathbf{1}(x \notin \cT) dN(x) \right \vert \cF_t \right] = \lim_{\varepsilon \to 0^+} \frac{1}{\varepsilon} \E [ N(t + \varepsilon) - N(t) \vert \cF_t] = \Lambda(t).
\end{align*}
We now prove the inductive case. For $k \ge 1$, we have that
\begin{align*}
\lambda^{(k+1)}(t) & = \lim_{\varepsilon \to 0^+} \frac{1}{\varepsilon} \int_t^{t + \varepsilon} \mathbf{1}(x \notin \cT) d \lambda^{(k)}(x) \\
& = \lim_{\varepsilon \to 0^+} \frac{\lambda^{(k)}(t + \varepsilon) - \lambda^{(k)}(t)}{\varepsilon} \\
& = \sum_{i \ge 1} A_i x_i^{(k)}(t - t_i) \mathbf{1}(t \ge t_i),
\end{align*}
where, in the first equality above, we have removed the expectation operator since $\lambda^{(k)}$ is deterministic for $k \ge 1$, by the inductive hypothesis.

To apply Theorem \ref{thm:derivative_thresholding}, we will need to bound the the absolute value of the $\lambda^{(k)}$'s. To do so, it is useful to note that since there are finitely many $x_i$'s and they are smooth in $[0,T]$, it holds for all integers $k \ge 1$ that there exists a constant $D_k$ such that for all $i \in \{1, \ldots, m \}$ 
\[
\sup_{t \in [0,T]} \left| x_i^{(k)}(t) \right | \le D_k.
\]
In light of the display above and the representation \eqref{eq:lambda_smooth_plus_jump}, it can be readily checked that Assumptions \ref{as:regularity} and \ref{as:lambda_k} are satisfied if $R, \mathbf{S}, \mathbf{J}, \mathbf{L}$ are set as as follows:
\begin{itemize}
    \item $R : = \sum_{i \ge 1} A_i$,
    \item $S_k : = D_{k - 1} \sum_{i \ge 1} A_i$ for $k \ge 1$,
    \item $J_k = ( \sum_{i \ge 1} A_i )^{o(1)}$,
    \item $L_k = S_{k + 1}$ for $k \ge 0$.
\end{itemize}
Additionally, for a fixed $\ell \ge 1$, the event $\Eabrupt$ (introduced in Definition \ref{def:Eabrupt}) holds with probability 1 if we set
\begin{itemize}
    \item $A = (\sum_{i \ge 1} A_i)^\theta$,
    \item $\rho = D_{\le \ell - 1}$,
    \item $\spacing = \sep(\cD)$.
\end{itemize}
It is readily checked that the condition \eqref{eq:thm_parameter_conditions} is satisfied with the parameter choices above.
Applying Theorem \ref{thm:derivative_thresholding} shows that \eqref{eq:estimating_T} holds on the event $\Eapprox \cap \Eabrupt$, and furthermore, in light of Proposition \ref{prop:algorithm}, the algorithmic conclusions of Theorem \ref{thm:smooth-plus-jump} are satisfied.
Finally, we note that $\p ( \Eapprox \cap \Eabrupt) = \p ( \Eapprox ) = 1 - o(1)$ as $\sum_{i \ge 1} A_i \to \infty$.
\end{proof}

\section{Proofs for the SI process}\label{sec:SIprocess}

To apply our general change-point detection results described in Section~\ref{subsec:general_result} to prove Theorems~\ref{thm:detecting_high_degree} and \ref{thm:estimating_high_degrees}, 
we need to characterize two quantities: (1) upper bounds for the derivatives when there are no big jumps around, and (2) infinitesimal changes in higher-order derivatives when big jumps occur. 

To compute the $\lambda^{(k)}(t)$'s for the SI process, we introduce some definitions. 

\begin{defn}[Marked trees]\label{def:marked-trees}
A marked tree is a tuple $(T,\mu)$ where $T$ is a rooted tree and $\mu : V(T) \to \{0,1\}$ a marking of the vertices. We inductively define a family of marked trees $\cM = \bigcup_{k\geq 1} \cM_k$, where $\cM_k$ is the set of marked trees with $k$ edges. Let $\cM_1 = \{ (T_1,\mu_1) \}$, where $T_1$ is the 1-edge path, and $\mu$ is such that the root $v$ has $\mu(v) = 0$ and its neighbor $u$ has $\mu(u) = 1$. Then, given $\cM_{k-1}$, we construct $\cM_k$ as follows: for each marked tree in $\cM_{k-1}$, add to $\cM_{k}$ the marked trees obtained by performing exactly one of the following operations: 
\begin{enumerate}
    \item adding a node $u$ with $\mu(u) = 1$ as a neighbor of the root, 
    \item picking one of the $1$-marked neighbors of the root, say $u$, setting it to now be the root, with $\mu(u) = 0$, and then attaching a new $1$-marked leaf to it.
\end{enumerate}
If there are duplicates (that is, distinct elements of $\cM_{k -1}$ lead to the same marked tree through the operations above), then we only add one copy to the set $\cM_k$.
\end{defn}

\begin{figure}[hbtp]
    \centering
    \input{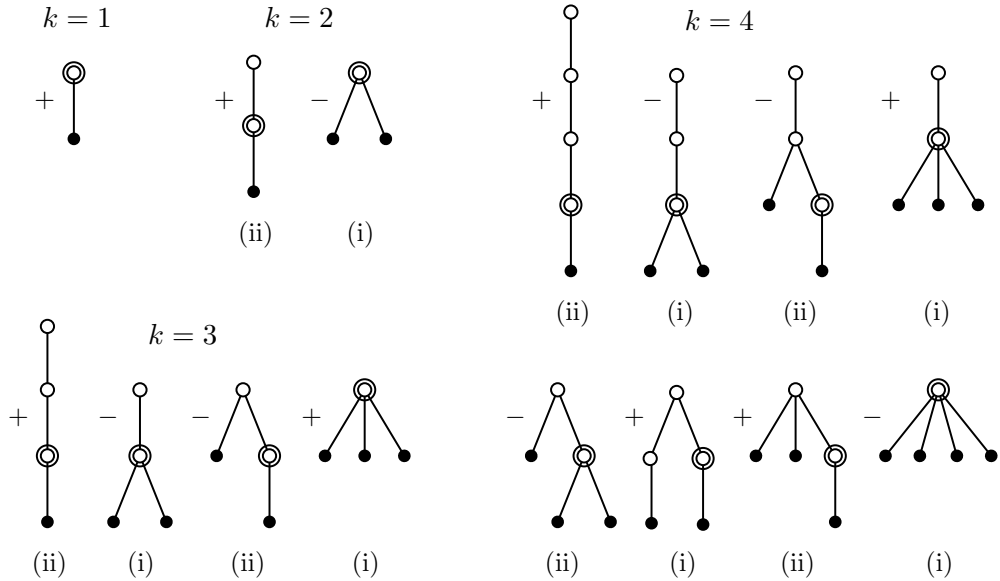}
    \caption{Diagrams of the trees in $\cM_k$ for $k=1,2,3,4$, where the root is doubly circled, and white and black vertices correspond respectively to 0 and 1 marked vertices. For each marked tree $(T,\mu)$, the $+/-$ sign corresponds to its corresponding $(-1)^{n_1(T,\mu)+1}$ term from Lemma~\ref{lemma:si_higher_derivative}, and the (i)/(ii) corresponds to the operation via which it was obtained.}
    \label{fig:enter-label}
\end{figure}

The following lemma details some useful properties of marked trees. 

\begin{lem}
\label{lemma:marked_tree_properties}
For all $k \in \N$, the following properties hold:
\begin{enumerate}
    \item For all $(T, \mu) \in \cM_k$, the root has at least one 1-marked neighbor.
    \item For all $(T, \mu) \in \cM_k$ and all $u \in T$, if $\mu(u) = 1$ then $u$ is a leaf.
    \item Let $k \ge 2$. Each marked tree in $\cM_{k}$ can be constructed from exactly one marked tree in $\cM_{k -1}$.
    \item $|\cM_k| = 2^{k - 1}$.
\end{enumerate}
\end{lem}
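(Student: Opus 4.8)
I would prove all four properties simultaneously by induction on $k$, carrying along the auxiliary statement that in every $(T,\mu)\in\cM_k$ the root is $0$-marked. The base case $k=1$ is immediate from the description of $\cM_1$. For the inductive step, fix $(T',\mu')\in\cM_k$ with $k\ge 2$; by definition of $\cM_k$ it is obtained from some $(T,\mu)\in\cM_{k-1}$ via operation (i) or (ii), and all induction hypotheses are available for $(T,\mu)$.

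The auxiliary statement and properties (1)--(2) follow from a direct inspection of the two operations. Operation (i) leaves the root and its mark untouched, and operation (ii) re-marks the new root to $0$ by fiat, so the root stays $0$-marked. Operation (i) attaches a new $1$-marked neighbor to the root and operation (ii) attaches a new $1$-marked leaf to the (new) root, which gives property (1). For property (2): in both operations the only newly created vertex is a $1$-marked leaf; the only vertex besides the root whose degree changes is, in case (ii), the promoted vertex $u$, which is simultaneously re-marked to $0$; and the root is $0$-marked by the auxiliary statement. Hence no $1$-marked non-leaf is ever created, and by the induction hypothesis none was present in $(T,\mu)$.

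The crux is property (3): I must show that $(T',\mu')$ has a \emph{unique} parent in $\cM_{k-1}$. The key point is that $(T',\mu')$ itself reveals which operation produced it. If $(T',\mu')$ comes from a parent $P$ via (i), then by the induction hypothesis (property (1)) the root of $P$ already had at least one $1$-marked neighbor, so the root of $T'$ has at least \emph{two} $1$-marked neighbors; these are all leaves (property (2), already proved for $T'$), and the parent is recovered by deleting one of them, the choice being irrelevant since such leaves are interchangeable. If instead $(T',\mu')$ comes from $P$ via (ii), then the root of $T'$ was a $1$-marked leaf of the root of $P$, so it had exactly one neighbor there; after (ii) it has exactly two neighbors -- the old root (which is $0$-marked, by the auxiliary statement) and the new $1$-marked leaf -- so the root of $T'$ has \emph{exactly one} $1$-marked neighbor, and $P$ is recovered by deleting that leaf, re-marking the root as $1$, and re-rooting at its remaining neighbor. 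Since by property (1) the root of $T'$ has at least one $1$-marked neighbor, the dichotomy ``at least two'' versus ``exactly one'' pins down the operation, and in each case the reconstruction of $P$ is forced; this proves (3) (and, incidentally, shows that the ``remove duplicates'' clause in Definition~\ref{def:marked-trees} is never actually invoked).

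Property (4) then follows by counting the fibers of the parent map $\cM_k\to\cM_{k-1}$ furnished by (3). A fixed $P\in\cM_{k-1}$ produces exactly one tree via (i) (the construction is deterministic) and exactly one via (ii) (the resulting marked tree does not depend on which $1$-marked root-neighbor is promoted, as these are interchangeable leaves), and these two trees are distinct because their roots have at least two versus exactly one $1$-marked neighbor. Since (3) makes the fibers disjoint, $|\cM_k| = 2|\cM_{k-1}| = 2^{k-1}$, closing the induction. The only subtle step is (3); properties (1), (2) and the fiber count are routine bookkeeping about how the operations act on vertex degrees and marks.
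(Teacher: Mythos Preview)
Your proposal is correct and follows essentially the same route as the paper: both arguments prove (1) and (2) by a direct induction on how the two operations act on marks and degrees, and both hinge on the same dichotomy for (3)---a tree produced by (i) has at least two $1$-marked root-neighbors while a tree produced by (ii) has exactly one---then use this to count children for (4). Your version is slightly more explicit (carrying the auxiliary claim that the root is $0$-marked, and spelling out the inverse map rather than arguing by contradiction), but the ideas are the same.
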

\begin{proof}
To prove the first property, note that it holds for $(T_1, \mu_1) \in \cM_1$, and for the inductive step, neither operation (i) nor (ii) can create any non-leaf 1-marked node. Similarly, the second property is satisfied for $(T_1, \mu_1)$, and operations (i) and (ii) ensure that the root has at least one 1-marked neighbor.

The third property is proved by way of contradiction. Assume that there are two distinct marked trees $(T, \mu), (T', \mu') \in \cM_{k - 1}$ that generate the same marked tree $(T'', \mu'')$ on $k$ edges. Since the operations (i) and (ii) are reversible, we must have that $(T'', \mu'')$ is obtained by applying different operations to $(T, \mu)$ and $(T', \mu')$. However, applying (i) creates a marked tree with at least two 1-marked neighbors of the root, whereas operation (ii) creates a marked tree with exactly one 1-marked neighbor of the root. Thus operations (i) and (ii) cannot lead to the same tree, which is a contradiction. This proves the third property.

We now prove the fourth property. In light of the second property, all potential marked trees created by operation (ii) are isomorphic, so each tree in $\cM_k$ creates two distinct marked trees via operations (i) and (ii), up to isomorphism. Furthermore, the third property implies that each sequence of operations leads to unique trees. It immediately follows that $| \cM_k | = 2^{k - 1}$, as each marked tree in $\cM_k$ is formed from $k - 1$ operations.
\end{proof}

\begin{defn}[Marked subtree counts]
Let $t \ge 0$, and let $v \in \partial \cI(t) := \{ v \in V \setminus \cascade(t)$ and $v$ has a neighbor in $\cascade(t)\}$. For a marked tree $(T, \mu)$, let $F_t(T, \mu, v)$ be the number of functions $f : V(T) \to V$ such that the following hold:
\begin{enumerate}
    \item $f$ is a homomorphism; that is $(f(u), f(w)) \in E \iff (u,w) \in E(T)$.
    \item If $\mu(u) = 0$ then $\deg(f(u)) \le d$.
    \item $\mu(u) = 1 \iff f(u) \in \cascade(t)$.
    \item The restriction $f:\{ u \in V(T) : \mu(u) = 0 \} \to V$ is injective (but it is possible that $f(u) = f(w)$ for distinct $u,w$ with $\mu(u) = \mu(w) = 1$).  
    \item $f(\treeroot(T)) = v$. 
\end{enumerate}
We also define $F_t(T, \mu) : = \sum_{v \in \partial \cascade(t):\, \deg(v) \le d} F_t (T, \mu, v)$.
\end{defn}

\begin{SCfigure}[.8]
    \centering
    \input{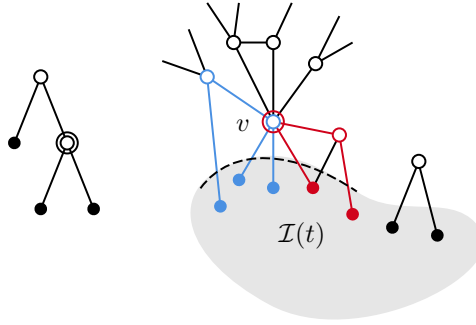}
    \vspace{-.25\baselineskip}
    \caption{In red and blue, two possible embeddings $f: V(T) \to V$ of the marked tree $(T,\mu)$ on the left hand side to some graph $(V,E)$, with $f$ satisfying the conditions of Definition~\ref{def:marked-trees}.}
    \label{fig:marked-subtrees-function}
\end{SCfigure}

In words, $F_t(T, \mu, v)$ can be viewed as the number of copies of $T$ rooted at $v$ where nodes marked with 1 correspond to infected vertices, and nodes marked with 0 correspond to uninfected vertices that have low degree. The quantity $F_t(T, \mu)$ represents the \emph{total} number of copies of $T$, without the restriction of being rooted at $v$.
 
Let the number of $1$-marked nodes in $(T,\mu)$ be $n_1(T,\mu)$.
We have the following result.

\begin{lem}
\label{lemma:si_higher_derivative}
Let $t \ge 0$ and let $k \ge 1$ be an integer.
It holds that
\begin{equation}\label{eq:si_higher_derivative}
    \lambda^{(k)}(t) = \sum_{(T, \mu) \in \cM_k} (-1)^{n_1(T,\mu) + 1} F_t (T,\mu).
\end{equation}
\end{lem}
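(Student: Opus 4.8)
The plan is to prove, by induction on $k$, the stronger assertion that $\lambda^{(k)}$ is a deterministic function of the current infected set. For $S\subseteq V$ let $F(S;T,\mu)$ denote the quantity $F_t(T,\mu)$ with $\cascade(t)$ replaced by $S$, set $g_k(S):=\sum_{(T,\mu)\in\cM_k}(-1)^{n_1(T,\mu)+1}F(S;T,\mu)$, and claim $\lambda^{(k)}(t)=g_k(\cascade(t))$ for all $t$ (which is exactly \eqref{eq:si_higher_derivative}). The key reduction is the following. The SI process is a finite-state pure-jump process, so each $\lambda^{(k)}$ is piecewise constant with jumps only at infection events, and the only infection events lying in $\cT$ are those of high-degree vertices; by Definition~\ref{def:G_conditions} all other vertices have degree at most $d$. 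Hence, assuming $\lambda^{(k)}(x)=g_k(\cascade(x))$, the definition of $\lambda^{(k+1)}$ together with \eqref{eq:SI-evolution} yields, for every $t$,
\begin{align*}
\lambda^{(k+1)}(t) &= \lim_{\epsilon\to0}\frac1\epsilon\E\left[\left.\int_t^{t+\epsilon}\mathbf{1}(x\notin\cT)\,d\lambda^{(k)}(x)\,\right\vert\cF_t\right]\\
&= \sum_{w\in\partial\cascade(t):\,\deg(w)\le d}|\cN(w)\cap\cascade(t)|\big(g_k(\cascade(t)\cup\{w\})-g_k(\cascade(t))\big)=:\mathcal L g_k(\cascade(t)),
\end{align*}
the interchange of limit and conditional expectation being routine for finite-state jump chains. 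Thus the lemma reduces to (i) the base case $\lambda^{(1)}(t)=g_1(\cascade(t))$ and (ii) the combinatorial identity $\mathcal L g_k=g_{k+1}$ for every $k\ge1$.

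The base case is immediate: $\cM_1=\{(T_1,\mu_1)\}$ with $n_1(T_1,\mu_1)=1$, and unwinding the definition gives $F(\cascade(t);T_1,\mu_1)=\sum_{w\in\partial\cascade(t):\,\deg(w)\le d}|\cN(w)\cap\cascade(t)|$, which equals $\lambda^{(1)}(t)$ by \eqref{eq:def-lambda-1} and \eqref{eq:SI-evolution} since $\mathbf{1}(x\notin\cT)$ deletes precisely the infections of high-degree vertices. For the inductive step I would first compute $g_k(\cascade\cup\{w\})-g_k(\cascade)$ termwise. Fix $(T,\mu)\in\cM_k$; by Lemma~\ref{lemma:marked_tree_properties}(2) every $1$-marked vertex of $T$ is a leaf. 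Passing from $\cascade$ to $\cascade\cup\{w\}$ flips the infection status only at $w$, so an embedding of $(T,\mu)$ is \emph{gained} exactly when a nonempty set of $1$-marked leaves --- and no $0$-marked vertex --- is sent to $w$ (such maps become valid once $w$ is infected), and is \emph{lost} exactly when the unique $0$-marked vertex sent to $w$ (unique by injectivity on $0$-marked vertices) ceases to be valid once $w$ is infected. This writes $g_k(\cascade\cup\{w\})-g_k(\cascade)$ as a signed sum of a ``gain'' and a ``loss'' part for each $(T,\mu)$.

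It then remains to multiply by $|\cN(w)\cap\cascade|$ --- equivalently, to also pick an infected neighbour $z$ of $w$ --- sum over admissible $w$, and reorganize into $g_{k+1}$. The heart of the matter is the following correspondence. A loss term of $(T,\mu)$ in which $\treeroot(T)$ is sent to $w$, together with $\{w,z\}$ attached at the root as a new $1$-marked leaf, is an embedding of the operation-(i) child of $(T,\mu)$; operation (i) adds a $1$-marked vertex and so flips $(-1)^{n_1+1}$, matching the minus sign of the loss part. Dually, a gain term in which a $1$-marked neighbour $u$ of $\treeroot(T)$ is sent to $w$, with $u$ re-marked $0$ (it now plays the role of the low-degree uninfected vertex $w$) and $\{w,z\}$ attached at $u$ as a new $1$-marked leaf, is an embedding of the operation-(ii) child, whose sign is unchanged (operation (ii) swaps a $1$-marked leaf for a $0$-marked vertex and adds a $1$-marked leaf), matching the plus sign of the gain part. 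By Lemma~\ref{lemma:marked_tree_properties}(3)--(4), each element of $\cM_{k+1}$ is obtained this way from a unique element of $\cM_k$. The main obstacle is that this is not yet a clean bijection onto the embeddings counted by $g_{k+1}$: loss terms in which a \emph{non-root} $0$-marked vertex is sent to $w$, gain terms in which two or more $1$-marked leaves are sent to $w$, and multiplicities arising when a vertex carries several symmetric $1$-marked leaves must all be reconciled. I expect to dispose of these leftover contributions via a sign-reversing involution (equivalently, an inclusion--exclusion argument) on the collection of such configurations ranging over all $(T,\mu)\in\cM_k$, pairing each with an equal one of opposite $(-1)^{n_1+1}$ sign. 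Constructing this involution and verifying that it respects every clause in the definition of $F_t(T,\mu)$ is where the bulk of the combinatorial work lies; the cases $k\le4$ in Figure~\ref{fig:enter-label} provide a useful sanity check of the bookkeeping.
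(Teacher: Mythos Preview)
Your approach mirrors the paper's: induct on $k$, track how each $F_t(T,\mu)$ changes when a low-degree boundary vertex $w$ is infected, and match the ``main'' pieces with the operation-(i) and operation-(ii) children of $(T,\mu)$. You go further than the paper in explicitly isolating the leftover configurations --- losses where a non-root $0$-marked vertex sits at $w$, gains where a $1$-marked leaf not adjacent to the root lands at $w$, or several $1$-marked leaves collide at $w$ --- and proposing to cancel them by a sign-reversing involution.

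This is precisely where the argument breaks: the leftovers do \emph{not} cancel, so no such involution exists. Take $G$ to be the path $1\text{--}2\text{--}3$ with all vertices low-degree and $\cascade=\{1\}$. Direct iteration of your operator $\mathcal L$ gives $\lambda^{(1)}=1$, $\lambda^{(2)}=0$, $\lambda^{(3)}=-1$, whereas evaluating the right-hand side of \eqref{eq:si_higher_derivative} for $k=3$ over the four trees of $\cM_3$ yields $(+1)\cdot 0+(-1)\cdot 1+(-1)\cdot 0+(+1)\cdot 1=0$. Carrying out your bookkeeping at level $k=2$ in general, the total leftover contribution to $\mathcal L g_2-g_3$ equals
\[
-\sum_{w}[w]\,\alpha_w\beta_w\;-\;2\sum_{w\sim w'}[w][w']\,\alpha_w\alpha_{w'},
\]
with $\alpha_w=|\cN(w)\cap\cascade|$, $\beta_w=|\{v\in\cN(w):v\notin\cascade,\ \deg v\le d\}|$ and $[w]=\mathbf 1[w\notin\cascade,\ \deg w\le d]$; this is generically nonzero, so the reduction $\mathcal L g_k=g_{k+1}$ already fails at $k=2$. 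The paper's own inductive step simply does not mention these leftover terms, so it has the same gap; the downstream uses in Lemma~\ref{lemma:SI-properties} only invoke the triangle-inequality bound $|\lambda^{(k)}(t)|\le\sum_{(T,\mu)}F_t(T,\mu)$, which would survive under a corrected (larger) family of marked trees, but the exact signed identity with $\cM_k$ and coefficients $(-1)^{n_1+1}$ as stated cannot be established.
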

\begin{proof}
We prove the claim by induction on $k$.
First consider $k=1$. Let $(T_1,\mu_1)$ be the single marked tree in $\cM_1$. Let $I(t) := |\cI(t)|$. From \eqref{eq:SI-evolution}, we have that 
\begin{align*}
\lambda^{(1)}(t) & = \lim_{\epsilon \to 0} \frac{\E [ ( I(t + \epsilon) - I(t)) \mathbf{1}( \cT \cap (t, t + \epsilon) = \emptyset) \vert \cF_t ]}{\epsilon} \\
& = \sum_{v \in \partial \cascade(t) :\, \deg(v) \le d} | \cN(v) \cap \cascade(t) | \\
& = F_t(T_1, \mu_1),
\end{align*}
as required for the base case. Note that $|\cN(v) \cap \cascade(t)| = \sum_{u \in \cN(v) \cap \cascade(t)} 1$ is the number of edges from $\cascade(t)$ to $v$. This contribution corresponds to computing the marked subtree count of the tree resulting from adding a $1$-marked neighbor to the root $v$. 

Now, assume \eqref{eq:si_higher_derivative} holds for $\lambda^{(k-1)}(t)$, and consider the change in each term (corresponding to a marked tree $(T,\mu)$) in the sum over $\cM_{k-1}$ as a vertex (the root $v$) becomes infected. 

On one hand, as $v$ becomes infected, the existing contribution to this term from $(T,\mu)$ is removed, and as in the base case, the sum $\sum_{v \not \in \cI(t)} |\cN(v) \cap \cI(t)|$ corresponds to adding a $1$-marked neighbor to the root. This is exactly the tree with $k$ edges obtained from $(T,\mu)$ by operation (i) in Definition~\ref{def:marked-trees}, say $(T',\mu')$. The resulting contribution to $\lambda^{(k)}(t)$ is $-(-1)^{n_1(T,\mu)+1} F_t(T',\mu') = (-1)^{n_1(T',\mu')+1} F_t(T',\mu')$. 

On the other hand, considering $v$ as infected, we now add the contribution of the ``new'' tree of the same shape as $(T,\mu)$ but where $v$ is thought of as one of the original $1$-marked neighbors of the root. Once again adding a $1$-marked neighbor to $v$ from the sum, this corresponds to the tree $(T'',\mu'')$ obtained from $(T,\mu)$ via (ii). Under (ii), the number of $1$-marked nodes is unchanged, so the contribution to $\lambda^{(k)}(t)$ is $(-1)^{n_1(T'',\mu'') + 1} F_t(T'',\mu'')$. 
\end{proof}

The next lemma lists a few properties of the conditional derivatives regarding their smoothness and jumps, which we then use to check the conditions needed to apply our main detection result.

\begin{lem}\label{lemma:SI-properties}
The following statements hold:
\begin{enumerate}
    \item \label{item:SI_smoothness}
    For any integer $k \ge 1$, $\sup_{t \ge 0}|\lambda^{(k)}(t)| \le n (2kd^k)^k$. 
    \item \label{item:SI_highdeg_jump}
    For every $v \in V$ satisfying $\deg(v) \ge D$, 
    \[
    \lambda^{(1)}(t_v) - \lambda^{(1)}(t_v^-) = \deg(v) - | \cN(v) \cap \cascade(t_v)|.
    \]
    \item \label{item:SI_generic_jump}
    For any integer $k \ge 1$,
    \[
    \max_{v \in V} \frac{| \lambda^{(k)}(t_v) - \lambda^{(k)}(t_v^-) |}{\deg(v)} \le (2kd^k)^{k+1} . 
    \]
\end{enumerate}
\end{lem}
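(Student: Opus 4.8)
The plan is to read off all three statements from the representation $\lambda^{(k)}(t) = \sum_{(T,\mu)\in\cM_k}(-1)^{n_1(T,\mu)+1}F_t(T,\mu)$ of Lemma~\ref{lemma:si_higher_derivative}, combined with the structural facts of Lemma~\ref{lemma:marked_tree_properties}: each $(T,\mu)\in\cM_k$ has exactly $k$ edges, $|\cM_k| = 2^{k-1}$, the root of $T$ is $0$-marked, and every $1$-marked vertex is a leaf (so every non-leaf of $T$ is $0$-marked). The engine of the argument is an elementary exploration bound for marked subtree counts: fix $u_0\in V(T)$ and $v_0\in V$, root $T$ at $u_0$, and count homomorphisms $f\colon V(T)\to V$ obeying conditions (1), (2), (4) in the definition of $F_t$ with $f(u_0) = v_0$; traversing $V(T)$ by a search from $u_0$, each of the $k$ edges contributes at most $\deg(f(\mathrm{parent}))$ choices for the image of the child. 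Two observations control these degrees: (a) if the parent is $0$-marked then $\deg(f(\mathrm{parent}))\le d$ by condition (2); and (b) in the $u_0$-rooted orientation, every vertex other than $u_0$ and the children of $u_0$ has a non-leaf parent --- because a leaf of $T$ distinct from $u_0$ has no children in the $u_0$-rooted tree --- hence by Lemma~\ref{lemma:marked_tree_properties} that parent is $0$-marked.

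For part~\ref{item:SI_smoothness}, take $u_0 = \treeroot(T)$ and let $v_0 = w$ range over the boundary vertices of degree at most $d$ appearing in $F_t(T,\mu)$. Since $\treeroot(T)$ and all non-leaves are $0$-marked, every parent image in the exploration has degree at most $d$, so $F_t(T,\mu,w)\le d^k$ and hence $F_t(T,\mu)\le nd^k$; summing over the $2^{k-1}$ marked trees gives $\sup_t|\lambda^{(k)}(t)|\le 2^{k-1}nd^k$, which is well below $n(2kd^k)^k$. For part~\ref{item:SI_highdeg_jump}, specialise to $k=1$, where $\cM_1$ is a single edge and $\lambda^{(1)}(t) = F_t(T_1,\mu_1) = \sum_{w\notin\cascade(t),\,\deg(w)\le d}|\cN(w)\cap\cascade(t)|$. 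Fix $v$ with $\deg(v)\ge D$; for $n$ large, $D>d$ and $p\ge 2$, so $v$ itself never appears in this sum, and the separation property in Definition~\ref{def:G_conditions} forces every neighbour of $v$ to have degree at most $d$. Thus infecting $v$ at time $t_v$ changes $\lambda^{(1)}$ only by increasing $|\cN(w)\cap\cascade|$ by exactly $1$ for each still-uninfected neighbour $w$ of $v$, so the jump equals $|\cN(v)\setminus\cascade(t_v)| = \deg(v) - |\cN(v)\cap\cascade(t_v)|$, using $v\notin\cN(v)$.

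For part~\ref{item:SI_generic_jump}, expand $\lambda^{(k)}(t_v)-\lambda^{(k)}(t_v^-) = \sum_{(T,\mu)\in\cM_k}(-1)^{n_1+1}\bigl(F_{t_v}(T,\mu)-F_{t_v^-}(T,\mu)\bigr)$. A homomorphism $f$ is counted differently at $t_v$ and $t_v^-$ only when $v\in f(V(T))$, since the only infection-dependent constraint (condition (3)) sees only the infection status of vertices in $f(V(T))$, and that status changes only at $v$; therefore $|F_{t_v}(T,\mu)-F_{t_v^-}(T,\mu)|$ is at most the number of $f$ with $v\in f(V(T))$ valid at $t_v$ plus the number valid at $t_v^-$. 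If such an $f$ is valid at the instant when $v$ is uninfected and $v=f(u)$, then $\mu(u)=0$ (condition (3)), so $\deg(v)\le d$ (condition (2)); hence this count is $0$ when $\deg(v)>d$, and when $\deg(v)\le d$ the exploration bound with $u_0=u$ (all factors $\le d$) bounds it by $(k+1)d^k$. If $f$ is valid at $t_v$ with $v = f(u)$ and $v$ infected, then $\mu(u) = 1$, so $u$ is a $1$-marked leaf; taking $u_0 = u$, its unique child gets at most $\deg(v)$ image choices while every later parent is a non-leaf, hence $0$-marked, so this count is at most $n_1(T,\mu)\,\deg(v)\,d^{k-1}\le k\,\deg(v)\,d^{k-1}$. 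Summing over $\cM_k$ and dividing by $\deg(v)\ge 1$ yields $|\lambda^{(k)}(t_v)-\lambda^{(k)}(t_v^-)|/\deg(v)\le 2^{k-1}\bigl(k d^{k-1} + (k+1)d^k\bigr)$, which is at most $(2kd^k)^{k+1}$ for all integers $k\ge 1$ and $d\ge 1$.

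The hard part will be the bookkeeping in part~\ref{item:SI_generic_jump}: making airtight the claim that a homomorphism changes validity across $t_v$ only if its image contains $v$ (this rests on the exact form of conditions (1)--(5)), and verifying that after re-rooting $T$ at a $1$-marked leaf every parent image except that of the re-root's unique child has degree at most $d$ (this rests on the leaf structure of $\cM_k$ from Lemma~\ref{lemma:marked_tree_properties}). Everything else is routine, and the wide margins between $2^{k-1}nd^k$, $2^{k-1}kd^{k-1}$, $(k+1)d^k$ and the stated bounds mean the constants require no optimisation.
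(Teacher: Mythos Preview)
Your proof is correct and takes a genuinely different (and tighter) route than the paper. The paper bounds $F_t(T,\mu,w)$ by first confining the image of any valid $f$ to the restricted $k$-hop neighbourhood $(V_k(w),E_k(w))$, showing $|E_k(w)|\le kd^k$ via a path count, and then crudely bounding the number of homomorphisms by $|E_k(w)|^k$; this gives $\sum_{(T,\mu)}F_t(T,\mu,w)\le (2kd^k)^k$. For part~\ref{item:SI_generic_jump} the paper then observes that $F_t(T,\mu,w)$ can change only when the newly infected vertex lies in $V_k(w)$, bounds the number of such low-degree roots $w$ by $\deg(v)\,kd^k$, and multiplies by the per-root bound. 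Your exploration argument---walking $T$ edge by edge from a chosen root and using that every parent in the walk is $0$-marked (since $1$-marked vertices are leaves)---is more direct and yields the sharper constants $2^{k-1}nd^k$ and $2^{k-1}(kd^{k-1}+(k+1)d^k)$, well inside the stated bounds. The re-rooting trick for part~\ref{item:SI_generic_jump}, anchoring the exploration at the preimage of $v$ rather than at $\treeroot(T)$, is the main conceptual difference and is what lets you avoid the neighbourhood machinery entirely.

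One point to make explicit when you write it up: your reduction ``$f$ changes validity across $t_v$ only if $v\in f(V(T))$'' treats $F_t(T,\mu)$ as the count of $f$ satisfying conditions (1)--(4), but by definition $F_t(T,\mu)=\sum_{w\in\partial\cascade(t):\deg(w)\le d}F_t(T,\mu,w)$, so there is an additional $t$-dependent constraint $f(\treeroot(T))\in\partial\cascade(t)$. This constraint is in fact implied by (1)--(4): $\mu(\treeroot(T))=0$ gives $f(\treeroot(T))\notin\cascade(t)$ and $\deg(f(\treeroot(T)))\le d$, while Lemma~\ref{lemma:marked_tree_properties}(1) furnishes a $1$-marked neighbour $u'$ of the root, so $f(u')\in\cascade(t)\cap\cN(f(\treeroot(T)))$. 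Once this is stated, your argument goes through cleanly.
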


\begin{proof}
We start by proving (\ref{item:SI_smoothness}. Fix a vertex $v \in \partial \cascade(s)$ satisfying $\deg(v) \le d$. Let $(V_k(v), E_k(v)) \subset (V,E)$ be the subgraph consisting of vertices in the $k$-hop neighborhood of $v$, where any high-degree vertex must be a leaf of the subgraph. Formally, $(u,w) \in E_k(v)$ if and only if the following two conditions hold:
\begin{itemize}
    \item $\dist(u,v), \dist(w,v) \le k$,
    \item and if $\deg(u) \geq D$, there exists no $w' \in V_k(v)$, $w' \neq w$, such that $(u,w') \in E_k(v)$;
\end{itemize}
let $V_k(v)$ be the vertex set associated to $E_k(v)$. We can bound $|E_k(v)|$ and $|V_k(v)|$ as follows. For each edge $(u,w) \in E_k(v)$, there exists a path from $v$ to either $u$ or $w$ (the last edge of the path is $(u,w)$) of length at most $k$ which passes through low-degree vertices, possibly except for the endpoint. The number of such paths is at most $\sum_{i = 1}^k d^i \le k d^k$, which bounds $|E_k(v)|$ from above. We also have $|V_k(v)| \leq 2|E_k(v)|$. 

For each $(T,\mu) \in \cM_k$, the functions counted by $F_t(T,\mu,v)$ must map $V(T)$ to some subset of $V_k(v)$. Therefore,
\begin{equation}
\label{eq:sum_Ft_bound}
\sum_{(T, \mu) \in \cM_k} F_t(T, \mu,v)  \le 2^{k-1} |E_k(v)|^k  \le (2kd^k)^k.
\end{equation}
Summing over $v \in \partial \cI(t) :\, \deg(v) \leq d$, we obtain (\ref{item:SI_smoothness}.

Next, we characterize the jump sizes. We have from the definition \eqref{eq:def-lambda-1} that 
\begin{align*}
\lambda^{(1)}(t) - \lambda^{(1)}(t^-) & = \sum_{v \in \partial \cascade(t) :\, \deg(v) \le d } | \cN(v) \cap \cascade(t) | \ - \sum_{v \in \partial \cascade(t^-) :\, \deg(v) \le d} | \cN(v) \cap \cascade(t^-) |.
\end{align*}
Suppose that $t_u = t$ for a high-degree vertex $u$. For $v \in \cN(u) \setminus \cascade(t)$, it holds that $|\cN(v) \cap \cascade(t) | - | \cN(v) \cap \cascade(t^-) | = 1$; for $v \notin \cascade(t) \cup \{ u \}$, $| \cN(v) \cap \cascade(t) | = | \cN(v) \cap \cascade(t^-) |$. The contribution of $| \cN(u) \cap \cascade(t)|$ is not relevant here, since $u$ is high-degree. As a result, we have that
\[
\lambda^{(1)}(t) - \lambda^{(1)}(t^-) = | \cN(u) \setminus \cascade(t) | = \deg(u) - | \cN(u) \cap \cascade(t) |,
\]
which proves (\ref{item:SI_highdeg_jump}. 

Next, we bound the jump sizes for the general case $k \ge 1$. Suppose that $t_u = t$. We can bound
\begin{align*}
\left| \lambda^{(k)}(t) - \lambda^{(k)}(t^-) \right| &  \le \sum_{(T, \mu) \in \cM_k} \left| F_t(T, \mu) - F_{t^-}(T, \mu) \right| \\
& = \sum_{(T, \mu) \in \cM_k} \sum_{\substack{v \in \partial \cascade (t) :\, \deg(v) \le d \\ u \in V_k(v)}} \left| F_t(T, \mu, v) - F_{t^-}(T, \mu, v) \right| \\
& \le \sum_{\substack{v \in \partial \cascade (t) :\, \deg(v) \le d \\ u \in V_k(v)}} \sum_{(T, \mu) \in \cM_k} \left( F_t(T, \mu, v) + F_{t^-}(T, \mu, v) \right) 
\end{align*}
where we used the triangle inequality, and the important fact that for any $(T,\mu) \in \cM_k$, if $u \not \in V_k(v)$, then $F_t(T,\mu,v) = F_{t^-}(T,\mu,v)$.

Now, 
note that in both cases $\deg(u) \leq d$ and $\deg(u) \geq D$, we have that
there are at most $\deg(u)kd^k$ low-degree vertices $v$ such that $u \in V_k(v)$. Therefore, again using \eqref{eq:sum_Ft_bound}, we have
\begin{equation*}
\left| \lambda^{(k)}(t) - \lambda^{(k)}(t^-) \right| 
\le 2 \deg(u) kd^k (2k)^k d^{k^2} = \deg(u) (2kd^k)^{k+1},
\end{equation*}
which proves (\ref{item:SI_generic_jump}.
\end{proof}

\subsection{Detecting high-degree vertices}
\label{subsec:SI-detection-proofs}
We can now prove that the infection times of high-degree vertices can be detected with high probability.

\begin{proof}[Proof of Theorem~\ref{thm:detecting_high_degree}]
Fix $\ell \geq 1$ as in the theorem statement, which satisfies $\alpha > (\ell+1)/(2\ell+1)$. 
We wish to apply the framework of Theorem~\ref{thm:derivative_thresholding}. Using Lemma~\ref{lemma:SI-properties}(\ref{item:SI_smoothness} and (\ref{item:SI_generic_jump}, it can be checked that Assumptions~\ref{as:regularity} and \ref{as:lambda_k} are satisfied are satisfied when we set $R$, $\mathbf S$, $\mathbf J$, $\mathbf L$ as follows: \[ R := n ~,~ S_k := n (2kd^k)^k ~,~ J_k := (2kd^k)^{k+1} ~\text{ and }~ L_k = S_{k+1}.\] 

Now, we must ensure that the event $\Eabrupt$ (c.f.\ Definition~\ref{def:Eabrupt}) holds with probability $1-o(1)$. First, we have no external events so $\cD = \emptyset$. 
Next, we want to show that $\kappa \geq 3\ell\delta$ with probability $1-o(1)$. (Note that in Definition~\ref{def:Eabrupt}, we required $1/\kappa \le S_{\leq \ell+1}^{o(1)}$ for simplicity; actually, we can see from \eqref{eq:delta_conditions} that $\kappa \geq 3\ell\delta$ is sufficient.) We set $\delta = S_{\leq \ell+1}^{-1/(2\ell+1)} = \Theta( n^{-1/(2\ell+1)})$. Let $u,v \in \highdeg(G)$. Then, as any path from $u$ to $v$ must contain at least $r > (2\ell+1)$ edges,
\[
\p\{ |t_u - t_v| < 3\ell\delta \} \leq (1+o(1)) n d^{r-1} (3\ell\delta)^{r} \leq (1+o(1)) d^{r-1} (3\ell)^r  n^{1 -r/(2\ell+1)} = o(1).
\] 
Union bounding over all pairs of $u, v \in \highdeg(G)$ gives us that $\kappa \geq 3\ell\delta$ with high probability. 

Finally, from \cite[Lemma~3.7]{mossel2024finding}, we have that with probability $1-o(1)$, $|\cN(v) \cap \cI(t_v)| \leq 2d\sqrt{n}\log^2 n$ for all $v \in V$. Therefore, using this and Lemma~\ref{lemma:SI-properties}(\ref{item:SI_highdeg_jump}, for a fixed $\ell \geq 1$,  
\begin{equation}\label{eq:SI-A-rho-settings}
A := D - 2d\sqrt{n}\log^2 n 
~\text{ and }~ \rho := 2(2\ell d^\ell)^{\ell+1},     
\end{equation}
then $\Eabrupt$ holds with probability $1-o(1)$. 

Finally, the time taken to infect all vertices can be bounded above by 
$\p\{ \max_{v \in V} t_v \geq n^2 \} \leq n^{-2} \E\{ \max_{v \in V} t_v \} = o(1)$.
Using this, we can check that \eqref{eq:thm_parameter_conditions} holds for the above settings of parameters with high probability.
Thus, using again that $\delta = \Theta( n^{-1/(2\ell+1)})$, we have that if $D = \Omega(n^\alpha)$ such that $A \geq S_{\ell+1}^\alpha$, then, Theorem~\ref{thm:derivative_thresholding} yields that with probability $1 - o(1)$, 
\begin{equation}\label{eq:cor-SI-inclusions}
    \cT \subseteq \left\{ t \in [0,\max_{v\in V} t_v]: \left| \frac{1}{\delta} \Delta_{\delta}^{(\ell+1)} \cI(t) \right| \geq \frac{A}{2} \right\} \subseteq \left\{ t \in [0,\max_{v\in V} t_v] : \exists t'\in\cT \text{ with } |t-t'| \leq (\ell + 1) \delta \right\},
\end{equation}
as required.
\end{proof}

We conclude this section by proving the corresponding result on impossibility of detection for $\alpha < 1/2$. 

\begin{proof}[Proof of Theorem \ref{thm:high_degree_detection_impossibility}]
As a shorthand, we write $\cG$ instead of $\cG(n,m,p,d,D)$. 
It was shown in \cite{mossel2024finding} that there exist two distributions over $n$-vertex graphs in $\cG$, denoted by $\mu_\emptyset$ and $\mu_v$ (where $v$ is a designated vertex in $G$), such that the following properties hold (see \cite[Appendix D]{mossel2024finding}):
\begin{itemize}
    \item For all graphs in the support of $\mu_\emptyset$, all vertices have degree at most $d$, i.e., there are no high-degree vertices.
    \item For all graphs in the support of $\mu_v$, vertex $v$ has degree at least $D$, and all others have degree at most $d$, i.e., $v$ is the unique high-degree vertex.
    \item Letting $P_\emptyset$ and $P_v$ be the induced distributions over infection times from a particular initial vertex $v_0$, it holds that $\mathrm{TV}(P_\emptyset, P_v) = o(1)$ as $n \to \infty$; see \cite[Lemma 29]{mossel2024finding}.
\end{itemize}
The last property is equivalent to the claimed result, which completes the proof. 
\end{proof}

\subsection{Estimating high-degree vertices}
In this section, we use Theorem~\ref{thm:detecting_high_degree} to recover the high-degree vertices from cascade traces, proving Theorem~\ref{thm:estimating_high_degrees}.
First, for each $k \in [K]$, denote the infection time of vertex $v$ in cascade $k$ as $t_v^{(k)}$. 
The algorithm we consider is as follows. 

\begin{alg}
Let $A$ be as in \eqref{eq:SI-A-rho-settings}, and for each $k \in [K]$, let 
\[
 \Hat{\cT}^{(k)} := \left\{ t \in [0,\max_{v\in V} t_v]: \left| \frac{1}{\delta} \Delta_{\delta}^{(\ell+1)} \cI(t) \right| \geq \frac{A}{2} \right\} \subset \R.
\]
be the set of abrupt change times detected from the $(\ell+1)$-th order discrete derivative estimator, and let $V^{(k)} := \{v \in V: t_v^{(k)} \in \Hat{\cT}^{(k)}\}$ be the corresponding infected vertices. Return $\bigcap_{k=1}^K V^{(k)}$.
\end{alg}

\begin{proof}[Proof of Theorem~\ref{thm:estimating_high_degrees}
]
We wish to show that $\highdeg(G) = \bigcap_{k=1}^K V^{(k)}$ with probability $1-o(1)$.

Theorem~\ref{thm:detecting_high_degree} above holds for each of the $K$ independent cascades. Therefore, from the first inclusion in \eqref{eq:cor-SI-inclusions}, we have that $\highdeg(G) \subseteq \bigcap_{k=1}^K V^{(k)}$ with high probability, as required. 

We must now show that $\bigcap_{k=1}^K V^{(k)}$ contains no false positives with high probability. That is, there exist no low degree vertices that are infected within $(\ell+1)\delta$ of a high-degree vertex in each of the $K$ cascades. Let $u \in V$ low degree and $v \in \highdeg(G)$. Then, in one cascade $k \in [K]$,
\begin{equation}\label{eq:prob-false-positive}
\p\{ |t_u - t_v| \leq (\ell+1) \delta 
\} \leq 3(\ell+1)\delta d.  
\end{equation}
Indeed, write $u \to v$ if $v$ is a descendant of $u$ in the tree of first infections (formed by the set of $n-1$ edges whose Poisson firings infect vertices). Then, on the event $u \to v$, we have that one of the edges neighboring $u$ must fire in $[t_u, t_u + (\ell+1)\delta]$, i.e., 
\[\p\{ ( |t_u - t_v| \leq (\ell+1)\delta ) \cap (u \to v) \}  \leq 
d(\ell+1)\delta.
\] 
On the other hand, if $u \not\to v$, then we can consider a natural coupling of the cascade on $G$ with the cascade on $G \setminus \{u\}$ (with all edges incident to $u$ removed), where we keep the same Poisson processes on all other edges. 
If $u \not\to v$ then $t_v$ is the same on $G$ as on the coupled $G \setminus \{u\}$. Therefore, conditioning on the process on $G \setminus \{u\}$, in the coupled process on $G$, the probability that any edge adjacent to $u$ fires in any fixed interval of length $2(\ell+1)\delta$ is $\leq d2(\ell+1)\delta$. So 
\[ \p \{ (|t_u - t_v| \leq (\ell+1)\delta) \cap (u \not\to v)\} \leq  2(\ell+1)d\delta.\] 
This proves \eqref{eq:prob-false-positive}. Union bounding over low degree vertices $u \in V$, we have
\[
\p\{ \exists u \in V:\ \forall k \in [K],\ \exists v \in \highdeg(G) \text{ such that } |t_u^{(k)} - t_v^{(k)}| \leq  
(\ell+1)\delta\}
\leq n \left(m\cdot {3(\ell+1)d\delta}\right)^K,
\]
which is $o(1)$ if $\delta^K = o(n^{-1})$, i.e., for $K > 2\ell+1 > (2\alpha+1)^{-1}$. 
\end{proof}

\section*{Acknowledgements}
We thank Omer Angel and Tselil Schramm for helpful discussions. 
EM was supported by ONR-MURI-N000142412742, Vannevar Bush Faculty Fellowship ONR-N00014-20-1-2826, 
NSF CCF 1918421, and Simons Investigator award (622132). AS was partially supported by ONR-N00014-20-1-2826 and NSF CCF 1918421. AB was supported supported by NSF GRFP 2141064 and in part by Simons Investigator award (622132).

\bibliographystyle{abbrv}
\bibliography{references}

\end{document}